\newcommand{\bA}{\boldsymbol{A}}
\newcommand{\bS}{\boldsymbol{S}}
\newcommand{\bs}{\boldsymbol{s}}
\newcommand{\bell}{\boldsymbol{\ell}}
\newcommand{\V}{I}
\newcommand{\fV}{\mathfrak{V}}
\newcommand{\tR}{\widetilde{R}}
\newcommand{\F}{\mathbb{F}}
\newcommand{\R}{\mathbb{R}}
\newcommand{\C}{\mathbb{C}}
\newcommand{\Q}{\mathbb{Q}}
\newtheorem{theorem}{Theorem}[section]
\newtheorem{cor}[theorem]{Corollary}
\newtheorem{lem}[theorem]{Lemma}
\theoremstyle{definition}
\author{Alexander A. Gaifullin}
\thanks{The work was partially supported by the Russian Foundation for Basic Research (projects 12-01-31444 and 13-01-12469), by grants of the President of the Russian Federation (projects MD-4458.2012.1, NSh-4995.2012.1), by a grant of the Government of the Russian Federation (project 11.G34.31.0053), and by a grant from Dmitry Zimin's ``Dynasty'' foundation.}
\title[Volume of a simplex as a  function of the areas of its two-faces]{Volume of a simplex as a multivalued algebraic function of the areas of its two-faces}
\date{}
\address{Steklov Mathematical Institute, Moscow, Russia\newline
${}$\hspace{4.3mm}Kharkevich Institute for Information Transmission Problems, Moscow, Russia\newline 
${}$\hspace{4.3mm}Demidov Yaroslavl State University, Yaroslavl, Russia}
\email{agaif@mi.ras.ru}
\begin{document}

\begin{abstract}
 For $n\ge 4$, the square of the volume of an $n$-simplex satisfies a polynomial relation with coefficients depending on the squares of the areas of $2$-faces of this simplex. First, we compute the minimal degree of such polynomial relation. Second, we prove that the volume an $n$-simplex satisfies a monic polynomial relation with coefficients depending on the areas of $2$-faces of this simplex if and only if $n$ is even and $n\ge 6$, and we study the leading coefficients of polynomial relations satisfied by the volume for other~$n$.
\end{abstract}

\maketitle

\section{Introduction}

Let $\Delta^n$ be an $n$-dimensional simplex with vertices~$p_0,\ldots,p_n$ in the Euclidean space~$\R^n$. The classical Cayley--Menger formula allows us to compute the volume~$V$ of~$\Delta^n$ from the lengths~$\ell_{ij}=|p_ip_j|$ of edges of~$\Delta^n$, cf.~\cite[section~40]{Blu70}:

\begin{equation}\label{eq_CM}
V^2=\frac{(-1)^{n+1}}{2^n(n!)^2}\cdot
\det\left(
\begin{matrix}
0 & 1 & 1 & 1 & \cdots & 1\\
1 & 0 & \ell_{01}^2 & \ell_{02}^2 & \cdots & \ell_{0n}^2\\
1 & \ell_{01}^2 & 0 & \ell_{12}^2 & \cdots & \ell_{1n}^2\\
1 & \ell_{02}^2 & \ell_{12}^2 & 0 & \cdots & \ell_{2n}^2\\
\vdots & \vdots & \vdots & \vdots & \ddots & \vdots\\
1 & \ell_{0n}^2 & \ell_{1n}^2 & \ell_{2n}^2 & \cdots & 0
\end{matrix}
\right)
\end{equation} 
The lengths of edges of an $n$-simplex~$\Delta^n$ are algebraically independent. We denote by~$\bell$ the set of ${n+1\choose2}$ independent variables~$\ell_{ij}=\ell_{ji}$,  and denote by~$\Q[\bell]$ and~$\Q(\bell)$ the ring of polynomials and the field of rational functions in these variables with coefficients in~$\Q$ respectively. Let~$\F$  be the algebraic closure of~$\Q(\bell)$.

Denote by $A_{i_0\ldots i_k}$ the $k$-volume of the $k$-face of~$\Delta^n$ spanned by the vertices $p_{i_0},\ldots,p_{i_k}$.  Denote by~$\bA_{k}$ the set of ${n+1}\choose{k+1}$ volumes $A_{i_0\ldots i_k}$ of $k$-faces of~$\Delta^n$. 

Our goal is to study  the volume~$V$ of a simplex as a multivalued function of the $k$-volumes~$A_{i_0\ldots i_k}$ of its $k$-faces. We shall mostly focus on the case $k=2$. 

The first natural question is whether the volume of an $n$-simplex is algebraic or not over the field~$L_k$ generated by the $k$-volumes of all its $k$-faces, i.\,e., whether there exists an algebraic relation
\begin{equation}\label{eq_main_alg}
a_0(\bA_{k})V^N+a_1(\bA_{k})V^{N-1}+\cdots+a_N(\bA_{k})=0
\end{equation}
that holds for all $n$-simplices~$\Delta^n$, where $a_i(\bA_{k})$ are polynomials  in the $k$-volumes $A_{i_0\ldots i_k}$ such that $a_0(\bA_{k})$ is not  identically zero for all~$\Delta^n$.  The precise definition of the field~$L_k$ is as follows. By the Cayley--Menger formula~\eqref{eq_CM}, the squares $W=V^2$ and $S_{i_0\ldots i_k}=A^2_{i_0\ldots i_k}$ are polynomials in~$\ell_{ij}$, i.\,e., elements of~$\Q[\bell]$. Then for every~$k$, $L_k$ is the subfield of~$\F$ generated over~$\Q$ by all elements~$A_{i_0\ldots i_k}=\sqrt{S_{i_0\ldots i_k}}$, and we are interested whether the element~$V=\sqrt{W}$ is algebraic or not over~$L_k$.   

It is not hard to show that $V$ is not algebraic over~$L_{n-1}$ for $n\ge 3$. Mohar and Rivin~\cite{MoRi10} showed that for $n\ge 2$, the ${n+1\choose 2}$ volumes~$A_{i_0\ldots i_{n-2}}$ of $(n-2)$-faces of~$\Delta^n$ are algebraically independent. Hence, $\Q(\bell)$ and $L_{n-2}$ have equal transcendancy degrees over~$\Q$. Since all elements of~$L_{n-2}$ are algebraic over~$\Q(\bell)$, this implies that the algebraic closures  of~$\Q(\bell)$ and of~$L_{n-2}$ coincide. Hence, $V$ is algebraic over~$L_{n-2}$.  Now, suppose that $k<n-2$. The result of Mohar and Rivin implies that the edge lengths of any $(k+2)$-face~$F\subset\Delta^n$ are algebraic over the field generated the $k$-volumes of $k$-faces of~$F$. Therefore, all edge lengths of~$\Delta^n$ are algebraic over~$L_k$, hence, $V$ is algebraic over~$L_k$.

Instead of relations~\eqref{eq_main_alg}, we shall study relations of the form
\begin{equation}\label{eq_bW}
b_0(\bS_{k})W^M+b_1(\bS_{k})W^{M-1}+\cdots+b_M(\bS_{k})=0,
\end{equation}
where $\bS_k$ denotes the set of ${n+1\choose k+1}$ squares $S_{i_0\ldots i_k}=A_{i_0\ldots i_k}^2$. This is more natural, since, unlike~$A_{i_0\ldots i_k}$ and~$V$, their squares~$S_{i_0\ldots i_k}$ and~$W$ are polynomials in the edge lengths. In other words, we shall study polynomials satisfied by~$W$ over the field~$K_k$ generated over~$\Q$ by the ${n+1\choose k+1}$ elements~$S_{i_0\ldots i_k}$. Our first goal is to compute the minimal degree of such polynomial for $k=2$.

\begin{theorem}\label{theorem_degree_V}
If\/ $n\ge 6$ and $n$ is even, then $W$ is a rational function in the squares of the areas of\/ $2$-faces, i.\,e., $W\in K_2$. If\/ $n\ge 5$ and $n$ is odd, then $W^2$ is a rational function in the squares of the areas of\/ $2$-faces, i.\,e., $W^2\in K_2$, but $W\notin K_2$. If\/ $n=4$, then the degree of the minimal polynomial of\/ $W$ over~$K_2$ is equal to~$32$.     
\end{theorem}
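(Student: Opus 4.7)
My plan is to reduce to a computation of the Galois group of a suitable normal closure $\Omega/K_2$ containing $W$. Since $W=V^2$ is a polynomial in $\bell^2$ via the Cayley--Menger formula and $\bell^2$ is algebraic over $K_2$ (by the Mohar--Rivin argument applied to each $4$-face, as recalled in the introduction), $K_2(W)/K_2$ is finite and its degree equals the orbit size of $W$ under $\mathrm{Gal}(\Omega/K_2)$. The basic source of nontrivial Galois symmetry is Heron's formula: given the $S_{ijk}$, each squared edge length is determined by any triangle containing it only up to a quadratic ambiguity, and these local ambiguities combine into the full group.

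For $n\ge 5$ the central claim is $W^2\in K_2$, which gives $[K_2(W):K_2]\le 2$. I would prove this by constructing an explicit algebraic identity from the Gram matrix $G_{ij}=\langle p_i-p_0,p_j-p_0\rangle$ of edge vectors from a fixed vertex, so that $(n!)^2W=\det G$, together with the fact that inner products of the bivectors $v_i\wedge v_j$ in $\Lambda^2\R^n$ satisfy $\langle v_i\wedge v_j,v_k\wedge v_l\rangle=G_{ik}G_{jl}-G_{il}G_{jk}$ and, in the right combinations, are expressible through the $S_{ijk}$. The goal is to assemble an auxiliary matrix $B$ with entries in $K_2$ whose determinant equals $c\,W^2$ for some $c\in K_2$. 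For even $n\ge 6$ the matrix $B$ should be chosen antisymmetric of even size, so that its Pfaffian, also a polynomial in $K_2$, equals $\pm W\sqrt{c}$; a judicious choice makes $\sqrt{c}\in K_2$ and hence $W\in K_2$. For odd $n\ge 5$ no such even-size antisymmetric reduction is available and only $W^2\in K_2$ is obtained. To show $W\notin K_2$ in the odd case I would exhibit a nontrivial $K_2$-automorphism sending $W\mapsto -W$, realized geometrically as a second realization of the same $2$-face data flipping the sign of the Cayley--Menger determinant; the parity factor $(-1)^{n+1}$ in that formula is what makes this admissible exactly for $n$ odd.

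For $n=4$ the previous method collapses because the ten $S_{ijk}$ are algebraically independent (Mohar--Rivin with $n-2=2$), so both $\Q(\bell^2)$ and $K_2(W)$ are substantial finite extensions of $K_2$. To show the degree is exactly $32$, I would make the Galois structure explicit by fixing the vertex $p_0$: the six Heron relations for the triangles $\{0,i,j\}$ express $\ell_{ij}^2$ as a function of $\ell_{0i}^2,\ell_{0j}^2$ up to a $\pm$ sign choice, while the four remaining triangles $\{i,j,k\}$ with $i,j,k>0$ impose compatibility constraints on these signs and on the four $\ell_{0i}^2$. A careful analysis should yield a Galois group of order $32$ (likely an elementary abelian $2$-group of rank five). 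The final step is to check that $W$ separates all $32$ conjugate configurations — rather than factoring through a proper subfield — which establishes the exact degree.

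The main obstacle is the construction and verification of the auxiliary matrix $B$ (or its Pfaffian) in Step~2: the correct choice is not at all canonical, and the parity dichotomy between odd and even $n$ must be made to fall out naturally from this construction, not imposed by hand. A secondary difficulty is the orbit count in the $n=4$ case — proving the degree is \emph{exactly} $32$ requires exhibiting $32$ distinct compatible values of $W$, not merely an upper bound of $32$.
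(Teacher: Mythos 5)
Your plan leaves the central claims unproved. For $n\ge 5$ everything hinges on producing an auxiliary matrix $B$ with entries in $K_2$ whose determinant is $c\,W^2$ (and, for even $n$, an antisymmetric $B$ whose Pfaffian recovers $W$ itself), but you never construct $B$ and yourself flag this as the main obstacle; there is no evidence such an explicit identity exists, and the paper does not produce one. Instead the paper proves \emph{non-constructively} that $[K_1:K_2]=2$ for $n\ge 5$ (Theorem~\ref{theorem_degree}), by computing the degree of the map $\varphi\colon(s_{ij})\mapsto(S_{ijk})$ at the equiareal point: one classifies all \emph{complex} preimages of the all-$3/16$ point via a complexified version of McMullen's rigidity of equiareal simplices (Lemmas~\ref{lem_tetrahedron}--\ref{lem_pre-images}), checks injectivity of the differentials, and concludes the degree is $2$. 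Once $[K_1:K_2]=2$, one identifies $K_2$ with the subfield $K_1^{ev}$ of even-degree rational functions in the $s_{ij}$, and the parity dichotomy for $W$ (homogeneous of degree $n$ in the $s_{ij}$) falls out. Your observation that $s_{ij}\mapsto -s_{ij}$ fixes $K_2$ and sends $W\mapsto(-1)^nW$ is correct and does prove $W\notin K_2$ for odd $n$, but that is the only part of the statement you actually establish; without the degree computation you cannot conclude $W^2\in K_2$, let alone $W\in K_2$ for even $n\ge 6$.

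The $n=4$ part also contains a structural error. The extension $K_1/K_2$ has degree $64$, not $32$: the fibre of $\varphi$ over the equiareal point consists of $64$ points ($52$ points $x_{\omega,\sigma}$ plus $12$ exceptional points $x_\gamma$ coming from $5$-cycles with $s_{ij}=\pm\sqrt{-3/5}$), and this inhomogeneous fibre is incompatible with your guess of an order-$32$ elementary abelian Galois group acting on the conjugates. Moreover your final criterion is backwards: $[K_2(W):K_2]=32$ holds not because $W$ separates the conjugates of $\bs$ but precisely because it fails to --- $W$ is invariant under $\bs\mapsto-\bs$ and identifies the $64$ conjugates in pairs, which is how the paper gets $64/2=32$ via a second degree computation for the map recording $(S_{ijk},W)$. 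Finally, your sign-bookkeeping (six quadratic ambiguities for the $s_{ij}$ with $i,j>0$, constrained by four further triangles) does not by itself yield a count of anything; the whole difficulty, which your proposal does not engage, is the exact enumeration of compatible complex solutions carried out in Lemma~\ref{lem_pre-images}.
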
 

A similar result can be obtained for the squares of the edge lengths~$s_{ij}=\ell_{ij}^2$. 

\begin{theorem}\label{theorem_degree_edge}
If\/ $n\ge 5$, then any element $s_{ij}s_{kl}$ \textnormal{(}in particular, any element~$s_{ij}^2$\textnormal{)} is a rational function in the squares of the areas of\/ $2$-faces, i.\,e., belongs to~$K_2$, but none of~$s_{ij}$ belongs to~$K_2$. If\/ $n=4$, then the degree of the minimal polynomial of\/~$s_{ij}$ over~$K_2$ is equal to~$64$.
\end{theorem}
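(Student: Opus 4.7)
My plan rests on a single $\Z/2$-symmetry of $\Q(\bs)$ over $K_{2}$. Heron's formula,
\[
16\, S_{ijk} = 2(s_{ij}s_{ik}+s_{ij}s_{jk}+s_{ik}s_{jk}) - (s_{ij}^{2}+s_{ik}^{2}+s_{jk}^{2}),
\]
shows that each $S_{ijk}$ is a homogeneous polynomial of degree~$2$ in the variables $\bs$, so the involution $\sigma\colon s_{ij}\mapsto -s_{ij}$ of $\Q(\bs)$ fixes every $S_{ijk}$ and hence fixes $K_{2}$ pointwise. Setting $K_{2}':=\Q(\bs)^{\sigma}$, one has $K_{2}\subseteq K_{2}'$, $[\Q(\bs):K_{2}']=2$, and $K_{2}'$ is generated over $\Q$ by the degree-$2$ monomials $s_{ab}s_{cd}$ (including $s_{ab}^{2}$). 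Because $\sigma(s_{ij})=-s_{ij}\ne s_{ij}$, one obtains $s_{ij}\notin K_{2}$ for every $n\ge 4$, which is the negative part of the theorem; the tower $K_{2}\subseteq K_{2}(s_{ij}^{2})\subseteq K_{2}(s_{ij})$ gives
\[
[K_{2}(s_{ij}):K_{2}] \,=\, 2\cdot[K_{2}(s_{ij}^{2}):K_{2}] \,\le\, 2\,[K_{2}':K_{2}].
\]

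For $n\ge 5$ the positive statement is equivalent to $K_{2}=K_{2}'$, i.e.\ to $s_{ab}s_{cd}\in K_{2}$ for every pair of edges. For a vertex-adjacent pair, the $2\times 2$ Gram identity $4\,s_{ik}s_{jk} = (s_{ik}+s_{jk}-s_{ij})^{2}+16\, S_{ijk}$ reduces the task to showing that the $\sigma$-invariant squares $(s_{ik}+s_{jk}-s_{ij})^{2}$ lie in $K_{2}$. For a general pair $\{i,j\},\{k,l\}$ the main tool is the dihedral-angle identity for the sub-tetrahedron $\{i,j,k,l\}$,
\[
\bigl(2 s_{ij}(s_{ik}+s_{il}-s_{kl}) - (s_{ik}+s_{ij}-s_{jk})(s_{il}+s_{ij}-s_{jl})\bigr)^{2} = 256\, S_{ijk}S_{ijl} - 576\, s_{ij}\, W_{ijkl},
\]
where $W_{ijkl}$ is the squared $3$-volume of the sub-tetrahedron. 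This identity is $\sigma$-invariant and, when unfolded, gives a relation among the unknowns $s_{ij}s_{kl}$ and $s_{ij}W_{ijkl}$ modulo quantities in $K_{2}$. The hypothesis $n\ge 5$ supplies a vertex $m\notin\{i,j,k,l\}$; permuting $m$ through the analogous dihedral identities for the sub-tetrahedra sharing $\{i,j\}$ or $\{k,l\}$, together with the Gram identities at triangles containing~$m$, should yield an invertible linear system over~$K_{2}$ for the collection of $\sigma$-invariant quantities $\{s_{ab}s_{cd}\}$ and $\{s_{ab}W_{abcd}\}$, placing all of them, and in particular all $s_{ab}s_{cd}$, into $K_{2}$.

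For $n=4$ the equality $K_{2}=K_{2}'$ must fail: the squared volume $W$ is a polynomial of even degree $n=4$ in $\bs$, hence lies in $K_{2}'$, while by Theorem~\ref{theorem_degree_V} it has $[K_{2}(W):K_{2}]=32$. I plan to rerun the same identities with only five vertices available: the building blocks $s_{ab}W_{abcd}$ can no longer be placed in $K_{2}$, but they lie in $K_{2}(W)$, which in turn forces every $s_{ab}s_{cd}\in K_{2}(W)$, yielding $K_{2}'\subseteq K_{2}(W)$ and so $K_{2}'=K_{2}(W)$ with $[K_{2}':K_{2}]=32$. A final check---again via the same identity system---that $s_{ij}^{2}$ is itself a primitive element of $K_{2}'/K_{2}$ then completes the tower computation $[K_{2}(s_{ij}):K_{2}]=2\cdot 32=64$.

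The main obstacle, I expect, will be the combinatorial bookkeeping in the $n\ge 5$ case: organising the triangle Gram and sub-tetrahedron dihedral identities into a finite, closed system that solves for every $s_{ab}s_{cd}$ over $K_{2}$. The availability of a vertex outside any chosen tetrahedron $\{i,j,k,l\}$---precisely the content of the assumption $n\ge 5$---is the structural input that makes the system solvable, and its absence when $n=4$ is what produces the extra degree-$32$ extension.
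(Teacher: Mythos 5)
Your structural observation is correct and matches the paper's: the sign involution $\sigma\colon s_{ij}\mapsto -s_{ij}$ fixes every $S_{ijk}$, the fixed field $K_2'=\Q(\bs)^\sigma$ is precisely what the paper calls $K_1^{ev}$, and the whole theorem for $n\ge5$ is equivalent to the equality $K_2=K_2'$. The paper reaches that equality by an entirely different route: it first proves (Theorem~\ref{theorem_degree}) that $[K_1:K_2]=2$ for $n\ge5$ via an algebro-geometric degree argument -- counting the pre-images of the ``all areas equal'' point $y_0$ under the map $\varphi\colon\bs\mapsto\bS_2$, using Lemmas~\ref{lem_pre-images}--\ref{lem_diff2}, which in turn depend on a complex version of McMullen's classification of equiareal simplices (Lemmas~\ref{lem_tetrahedron}--\ref{lem_tetrahedron_est}). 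Once $[K_1:K_2]=2$ is known, $K_2\subseteq K_2'\subsetneq K_1$ forces $K_2=K_2'$, and your conclusions follow. Your proposal instead attempts to establish $K_2=K_2'$ directly by an explicit system of Gram/Heron identities and dihedral-angle identities for sub-tetrahedra, solved simultaneously for the unknowns $s_{ab}s_{cd}$ and $s_{ab}W_{abcd}$.

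The genuine gap is precisely there: you never establish that this system is closed, $K_2$-linear, and invertible. The Heron identities contribute only $\binom{n+1}{3}$ linear relations among the $\binom{N+1}{2}$ quadratic monomials $s_{ab}s_{cd}$ (with $N=\binom{n+1}{2}$ edges), which is far too few, and the dihedral identities are relations among \emph{quartic} monomials in $\bs$, not additional linear equations in the same unknowns. Nothing in the sketch explains why introducing the extra unknowns $s_{ab}W_{abcd}$ and a fifth vertex suddenly produces an invertible linear system; the fact that the same approach provably fails for $n=4$ (where it would contradict $[K_1:K_2]=64$) shows that the claimed invertibility cannot be cheap, and is really equivalent to the content of Theorem~\ref{theorem_degree}. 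The $n=4$ branch has a second, essentially circular gap: you assert $s_{ab}W_{abcd}\in K_2(W)$, but this is a statement about membership in a proper subfield of $K_2'$ and is equivalent, modulo what you are trying to prove, to the desired equality $K_2'=K_2(W)$; no independent reason is given. Likewise the primitivity of $s_{ij}^2$ for $K_2'/K_2$ (needed to pass from $[K_2':K_2]=32$ to the claimed minimal-polynomial degree $64$) is left as a ``final check.'' The paper gets both facts by a separate degree-$1$ computation for the morphism $\chi=(\bS_2,s_{01})$, showing $K_2(s_{01})=K_1$, which has no analogue in your outline. In short: the symmetry reduction you make is the same as the paper's, but the hard input -- the degree of $\varphi$ (and, for $n=4$, of $\chi$) -- is missing, and the proposed replacement by explicit identities is not substantiated and very likely cannot be made to work without reproducing comparable machinery.
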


\begin{cor}\label{cor_generic}
For a generic simplex $\Delta^n\subset \R^n$, $n\ge 5$, any simplex with the same areas of the corresponding $2$-faces is congruent to it.
\end{cor}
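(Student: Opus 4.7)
\textbf{Proof plan for Corollary \ref{cor_generic}.}
The plan is to deduce the statement directly from Theorem~\ref{theorem_degree_edge}. For $n\ge 5$, that theorem supplies, for each pair of distinct indices $i<j$, a rational function $f_{ij}\in\Q(\bS_2)\subset K_2$ such that the identity $s_{ij}^2=f_{ij}(\bS_2)$ holds in $\F=\overline{\Q(\bell)}$. Writing $f_{ij}=P_{ij}/Q_{ij}$ with $P_{ij},Q_{ij}\in\Q[\bS_2]$ and $Q_{ij}\not\equiv 0$, I would call a simplex \emph{generic} if the product $\prod_{i<j}Q_{ij}(\bS_2)$ does not vanish on it; substituting the Cayley--Menger expressions for the $S_{i_0i_1i_2}$, this product becomes a nonzero element of $\Q[\bell]$, so the non-generic locus is a proper Zariski-closed subset of the space of $n$-simplices in $\R^n$.

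Now let $\Delta^n$ be generic in this sense and let $\widetilde{\Delta}^n$ be any $n$-simplex whose corresponding $2$-face areas coincide with those of $\Delta^n$. Then the tuples $\bS_2$ of the two simplices agree, so $f_{ij}(\bS_2)$ is defined on $\Delta^n$ and takes the same value for both simplices. Consequently $s_{ij}^2$, and hence the positive real number $s_{ij}=\ell_{ij}^2$ itself, takes the same value on $\Delta^n$ and on $\widetilde{\Delta}^n$, so all corresponding edge lengths coincide. It is classical that a Euclidean simplex is determined up to congruence by its edge lengths labelled by pairs of vertices (one builds it up vertex by vertex, each new vertex being determined up to reflection by its distances to the previously placed ones, after which a single global isometry identifies the two constructions). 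This yields the desired congruence.

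I do not expect any serious obstacle once Theorem~\ref{theorem_degree_edge} is in hand: the corollary is just the passage from a field-theoretic identity to its numerical specialization on a Zariski-open set, combined with edge-length rigidity of Euclidean simplices. The only point requiring a brief verification is that the denominators $Q_{ij}(\bS_2)$ are genuinely nonzero polynomials in $\bell$, which is immediate from the identity $s_{ij}^2=f_{ij}(\bS_2)$ in $\F$ since $s_{ij}$ is a nonzero element of $\Q[\bell]$.
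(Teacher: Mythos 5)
Your proposal is correct and is precisely the deduction the paper intends: Corollary~\ref{cor_generic} is stated as an immediate consequence of Theorem~\ref{theorem_degree_edge}, obtained by specializing the identities $s_{ij}^2=f_{ij}(\bS_2)$ (valid as polynomial identities in $\Q[\bell]$ after clearing denominators) at any simplex where the denominators do not vanish, and then invoking edge-length rigidity of Euclidean simplices. No gaps.
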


{\sloppy
\begin{theorem}\label{theorem_number}
For a generic set  of~$10$  positive real numbers~$A_{ijk}$, where $0\le i<j<k\le 4$, there exist not more than $32$ congruence classes of\/ $4$-simplices with the $2$-face areas~$A_{ijk}$. 
\end{theorem}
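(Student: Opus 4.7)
My plan is to combine Theorem~\ref{theorem_degree_V} with an analysis of the residual ambiguity among $4$-simplices sharing the same $2$-face areas and the same volume. Concretely, I identify a congruence class of $4$-simplex with its tuple $\bs=(s_{ij})_{0\le i<j\le 4}\in\R^{10}_{>0}$ of squared edge lengths (requiring that the associated Gram matrix be positive-definite), so that the number of congruence classes with prescribed $2$-face areas $A_{ijk}$ equals the number of real positive solutions $\bs$ of the $10$ Heron-type equations $S_{ijk}(s_{ij},s_{ik},s_{jk})=A_{ijk}^{2}$.

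The first step is an immediate consequence of Theorem~\ref{theorem_degree_V}: for $n=4$, $W=V^{2}$ satisfies a polynomial equation of degree $32$ over $K_{2}=\Q(\bS_{2})$, so for any fixed (generic real) $\bS_{2}$ there are at most $32$ distinct real values of $W$ attainable by any $4$-simplex with the given $2$-face areas. Hence it suffices to show that, for a generic real choice of $\bS_{2}$, different real congruence classes of $4$-simplices realising these face areas correspond to different values of $W$.

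This uniqueness claim is the main obstacle. A direct attempt to prove $\bs\in K_{2}(W)$, which would yield the result at once, is doomed: by Theorem~\ref{theorem_degree_edge} the minimal polynomial of $s_{ij}$ over $K_{2}$ has degree $64>32=[K_{2}(W):K_{2}]$, so $s_{ij}\notin K_{2}(W)$. Consequently, over $\C$ every admissible value of $W$ is compatible with at least two algebraically distinct tuples $\bs$, related by a nontrivial element of $\text{Gal}(K_{2}(\bs)/K_{2}(W))$. To obtain the desired uniqueness over $\R$, I would identify this Galois action on the $\bs$-tuples explicitly (using the algebraic relations developed in the proofs of Theorems~\ref{theorem_degree_V} and~\ref{theorem_degree_edge}) and then verify that any nontrivial Galois element must carry a \emph{physical} tuple with positive squared edge lengths and positive-definite Gram matrix to a \emph{non-physical} tuple violating one of these constraints (for instance, with some $s_{ij}<0$). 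Once established, this sign-breaking behavior gives the desired injection from real congruence classes into the attainable real values of $W$, and the bound $32$ follows.
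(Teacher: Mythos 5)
Your proposal is correct in substance and is a close variant of the paper's argument, with one pivot changed: the paper works with $s_{01}^2$ where you work with $W$. The paper observes that $K_2(s_{01})=K_1$ forces $K_2(s_{01}^2)=K_1^{ev}$, so $s_{01}^2$ has degree $32$ over~$K_2$ and \emph{all} the $s_{lm}^2$ are rational in $\bS_2$ and $s_{01}^2$; positivity of edge lengths then recovers each $s_{lm}$ from $s_{lm}^2$ by taking the positive root, giving at most $32$ congruence classes with no Galois analysis needed. Your route instead uses $[K_2(W):K_2]=32$ and must then kill the residual degree-$2$ ambiguity in $K_1/K_2(W)$, which you correctly flag as the crux but leave as a plan. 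That step is in fact easy, and you should make it explicit: the substitution $\sigma\colon s_{ij}\mapsto -s_{ij}$ (simultaneously for all $i,j$) is an automorphism of $K_1$ fixing $K_2(W)$ pointwise, because every $S_{ijk}$ is homogeneous of degree $2$ and $W$ is homogeneous of degree $4$ in the $s_{ij}$ (this is exactly the observation the paper uses in the proof of Theorem~\ref{theorem_degree_V} for $n=4$ to see that $z_0$ has the two preimages $x_{\emptyset,\pm1}$). Since $\sigma\neq\mathrm{id}$ and $[K_1:K_2(W)]=2$, $\sigma$ \emph{is} the nontrivial Galois element; geometrically, for generic $\bS_2$ the fibre of $\psi$ over a point of $Z$ is exactly $\{\bs,-\bs\}$. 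As $-\bs$ has all squared edge lengths negative, at most one point of each such pair is realisable by a real simplex, which is precisely your sign-breaking claim, and the bound $32$ follows. The paper's choice of generator makes the argument slightly more self-contained (no appeal to the Galois involution), while yours has the minor advantage of tying the count directly to the $32$ possible volumes; both are sound.
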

}

The next natural question is whether the volume~$V$ is integral or not over the $\Q$-subalgebra $R_k$ of~$\F$ generated by all $k$-volumes $A_{i_0\ldots i_k}$, i.\,e., whether there exists a \textit{monic\/} polynomial relation
\begin{equation*}
V^N+a_1(\bA_{k})V^{N-1}+\cdots+a_N(\bA_{k})=0
\end{equation*}
that holds for all $n$-simplices~$\Delta^n$, where $a_i(\bA_{k})$ are polynomials  in~$A_{i_0\ldots i_k}$.

One of the motivations for this question is a result by Sabitov~\cite{Sab96} that the volume of any simplicial polyhedron in~$\R^3$ satisfies a monic polynomial relation with coefficients depending on the edge lengths of the polyhedron, cf.~\cite{CSW97},~\cite{Sab98}. This result has been generalized to higher dimensions by the author~\cite{Gai11},~\cite{Gai12}. A natural question is whether it is possible or not to replace the edge lengths by the areas of $2$-faces, cf.~\cite{Sab11}.

\begin{theorem}\label{theorem_R}
The element $V$ is integral over the ring~$R_2$ if and only if $n$ is even and $n\ge 6$. 
\end{theorem}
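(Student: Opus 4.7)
The plan is to reduce the statement to a question about the polynomial ring $\Q[\bS_2]$ and then apply Theorem~\ref{theorem_degree_V}. Each generator $A_{ijk}$ of $R_2$ satisfies the monic relation $A_{ijk}^2-S_{ijk}=0$, so $R_2$ is integral over $\Q[\bS_2]$; by transitivity, $V$ is integral over $R_2$ if and only if $V$ is integral over $\Q[\bS_2]$. The passage $V\mapsto W=V^2$ transfers integrality in both directions: $V$ integral forces $W$ integral because integral elements form a subring, and conversely a monic relation $\sum_{i=0}^{N}c_i W^{N-i}=0$ with $c_i\in\Q[\bS_2]$ and $c_0=1$ becomes the monic relation $\sum_{i=0}^{N}c_i V^{2N-2i}=0$ of degree $2N$ in $V$. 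The theorem therefore reduces to showing that $W$ is integral over $\Q[\bS_2]$ if and only if $n$ is even and $n\ge 6$.

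For sufficiency, suppose $n\ge 6$ is even. By Theorem~\ref{theorem_degree_V}, $W\in K_2$. Since the polynomial ring $\Q[\bS_2]$ is integrally closed in its field of fractions $K_2$, it suffices to show that $W$ is integral over $\Q[\bS_2]$; then $W\in\Q[\bS_2]\subseteq R_2$ and $V^2-W=0$ provides the desired monic relation. I would obtain this integrality from the stronger claim that the entire ring $\Q[\bs]$ of polynomials in the squared edge lengths is integral over $\Q[\bS_2]$; since $W\in\Q[\bs]$ by Cayley--Menger~\eqref{eq_CM}, this immediately forces $W$ to be integral. By Theorem~\ref{theorem_degree_edge}, $s_{ij}^2\in K_2$ for $n\ge 5$, so the same integral-closedness argument reduces this further to the polynomial identity $s_{ij}^2\in\Q[\bS_2]$, which I would extract by refining the proof of Theorem~\ref{theorem_degree_edge} or by eliminating the other squared edge lengths among the Heron equations for the triangles incident to the edge~$ij$.

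For necessity when $n$ is odd and $n\ge 5$, Theorem~\ref{theorem_degree_V} gives $W\notin K_2$ but $W^2\in K_2$, so the minimal polynomial of $W$ over $K_2$ is $X^2-W^2$. Integral closedness of $\Q[\bS_2]$ in $K_2$ implies that $W$ is integral over $\Q[\bS_2]$ if and only if $W^2\in\Q[\bS_2]$, and the task becomes to rule out this polynomiality. I plan to produce a discrete valuation on $K_2$ that is nonnegative on $\Q[\bS_2]$ but strictly negative on $W^2$, by constructing a one-parameter family of $n$-simplices whose $2$-face data $S_{ijk}$ stay generic while some $s_{ij}^2$-combination appearing in $W^2$ develops a pole — this is a geometric degeneration in which the edge-length data blows up in a direction that is invisible to $\bS_2$.

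The remaining case $n=4$ is the main obstacle. Here Theorem~\ref{theorem_degree_V} gives that the minimal polynomial of $W$ over $K_2$ has degree $32$, so integrality of $W$ over $\Q[\bS_2]$ requires every one of its thirty-two non-leading coefficients — the elementary symmetric functions of the Galois conjugates of $W$ — to lie in $\Q[\bS_2]$. By Theorem~\ref{theorem_number} these conjugates correspond to at most $32$ congruence classes of $4$-simplices with the prescribed $2$-face areas, and I would identify a single tractable symmetric function (likely of low degree, for tractability) and track how one of these congruence classes degenerates as $\bA_2$ approaches a carefully chosen singular configuration along which the areas remain finite but the corresponding $W$ blows up. The resulting pole persists in the relevant coefficient of the minimal polynomial, defeating integrality. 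Producing such a degeneration explicitly — a genuinely negative statement about the denominator structure of the degree-$32$ relation supplied by Theorem~\ref{theorem_degree_V} — is where the bulk of the work is expected to lie.
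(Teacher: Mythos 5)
Your opening reduction (integrality of $V$ over $R_2$ is equivalent to integrality of $W$ over $\Q[\bS_2]$) is fine, but the sufficiency argument built on it fails at its load-bearing step. First, for $n\ge 5$ the ring $\Q[\bS_2]$ is \emph{not} a polynomial ring: there are $\binom{n+1}{3}>\binom{n+1}{2}$ generators $S_{ijk}$ inside a field of transcendence degree $\binom{n+1}{2}$, so they are algebraically dependent, and integral closedness of $\Q[\bS_2]$ in $K_2$ is precisely what is in doubt (the variety $Y$ of Section~\ref{section_degree} is singular at~$y_0$ for $n>4$). Second, and fatally, the claim that $\Q[\bs]$ is integral over $\Q[\bS_2]$ --- in particular that $s_{ij}^2\in\Q[\bS_2]$ --- is false for every $n$. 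Partition $I$ into two nonempty parts, set $s_{ij}=0$ when $i,j$ lie in the same part and $s_{ij}=t$ otherwise; Heron's formula~\eqref{eq_reg_map} then gives $S_{ijk}\equiv 0$ for every triangle, while $s_{ij}=t$ is unbounded for cross pairs. Any monic relation for $s_{ij}$ or $s_{ij}^2$ over $\Q[\bS_2]$ would force boundedness along this curve. Theorem~\ref{theorem_degree_edge} only places $s_{ij}^2$ in the \emph{field}~$K_2$; its denominator genuinely vanishes on such degenerations. This is exactly why the theorem is nontrivial: $W$ must be shown to be integral even though the ambient ring $\Q[\bs]$ containing it is not, and the paper does this by classifying all valuations $v$ of $\F$ with $v\le 1$ on $R_2$ (Lemma~\ref{lem_val} together with the type~I/type~II analysis of Sections~\ref{section_valuations}--\ref{section_typeII}) and bounding $v(V)$ in each case.

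The negative directions are outlines rather than proofs. For odd $n$, your reduction to ruling out $W^2\in\Q[\bS_2]$ again presupposes integral closedness; without it, $W^2\notin\Q[\bS_2]$ does not preclude integrality of $W$. The robust route --- which you gesture at --- is to exhibit a curve along which all $S_{ijk}$ stay bounded while $W$ blows up, contradicting any monic relation directly; but that curve is the entire content of the step and you have not constructed it (the paper splits $I$ into two halves of size $q=(n+1)/2$ with cross edges $t$ or $t+1$, getting $S_{ijk}\in\{0,-1/16\}$ and $W$ linear in~$t$). For $n=4$ the scheme involving symmetric functions of the $32$ Galois conjugates is both unexecuted and unnecessary: a monic relation again forces boundedness, so a single explicit curve with bounded $S_{ijk}$ and $W\sim t^4$ suffices, and the paper produces one (keeping even all $S_{ijk}^{\pm1}$ bounded, which settles the stronger statement about~$\Lambda_2$ at the same time).
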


\begin{cor}\label{cor_Rk}
Suppose that $n$ is odd, $k$ is even, $0<k<n$, and $k\ne 4$. Then the element~$V$ is not integral over the ring~$R_k$. 
\end{cor}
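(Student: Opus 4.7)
The plan is to reduce Corollary~\ref{cor_Rk} to Theorem~\ref{theorem_R} via transitivity of integral dependence. For $k=2$ the statement is immediate from Theorem~\ref{theorem_R} applied to odd $n$, so I would focus on the remaining case: $k$ even and $k\ge 6$.

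First I would apply Theorem~\ref{theorem_R} \emph{intrinsically} to each $k$-face $F_{i_0\ldots i_k}$ of $\Delta^n$. Since this face is itself a $k$-simplex with $k$ even and $k\ge 6$, the theorem tells us that its $k$-volume $A_{i_0\ldots i_k}$ is integral over the $\Q$-subalgebra generated by the areas of the $2$-faces of $F_{i_0\ldots i_k}$. But every such $2$-face is a $2$-face of the ambient simplex $\Delta^n$, so all the generators lie in $R_2$. Hence $A_{i_0\ldots i_k}$ belongs to the integral closure $\overline{R_2}$ of $R_2$ in $\F$. Letting the $(k+1)$-tuple $i_0<\cdots<i_k$ range over all $k$-faces of $\Delta^n$ yields the ring inclusion $R_k\subseteq\overline{R_2}$.

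Now assume for contradiction that $V$ is integral over $R_k$. Combined with $R_k\subseteq\overline{R_2}$ and with the fact that every element of $\overline{R_2}$ is integral over $R_2$, transitivity of integrality gives that $V$ is integral over $R_2$. This contradicts Theorem~\ref{theorem_R}, which asserts that $V$ is \emph{not} integral over $R_2$ when $n$ is odd.

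The argument is essentially a two-line tower, so the only conceptual point to verify carefully is that Theorem~\ref{theorem_R} really applies intrinsically to any $k$-simplex sitting as a face of $\Delta^n$; this is immediate, since the statement depends only on the combinatorial type and on polynomial relations among edge lengths that restrict without change to any face. The exclusion of $k=4$ in the corollary reflects exactly the failure of Theorem~\ref{theorem_R} for $4$-simplices: a $4$-face yields no integrality of $A_{i_0\ldots i_4}$ over the $2$-face areas, and the tower argument therefore breaks down.
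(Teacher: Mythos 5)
Your proof is correct and takes essentially the same approach as the paper: apply Theorem~\ref{theorem_R} to the $k$-faces to show $R_k$ is integral over $R_2$, then use transitivity of integral dependence together with the non-integrality of $V$ over $R_2$ from Theorem~\ref{theorem_R} for odd $n$. Your remark on why $k=4$ must be excluded also matches the paper's logic.
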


\begin{proof}
If $k=2$, the corollary coincides with Theorem~\ref{theorem_R}. Suppose, $k\ge 6$. Applying Theorem~\ref{theorem_R} to $k$-dimensional faces of~$\Delta^n$, we obtain that all their $k$-volumes are integral over~$R_2$. Hence, all elements of~$R_k$ are integral over~$R_2$. By Theorem~\ref{theorem_R}, $V$ is not integral over~$R_2$. Therefore, $V$ is not integral over~$R_k$. 
\end{proof}

A classical question is whether the $k$-volumes of $k$-faces of an $n$-simplex determine its congruence class uniquely, or at least up to finitely many possibilities. Corollary~\ref{cor_generic} and Theorem~\ref{theorem_number} answer this question in \textit{generic\/} situation. However, it is much more interesting to obtain results that hold for \textit{all\/} simplices. The most interesting case is $k=n-2$, since the congruence class of an $n$-simplex depends on ${n+1\choose 2}$ parameters, which is equal to the number of $(n-2)$-dimensional faces. The question due to Connelly is whether the $(n-2)$-volumes of $(n-2)$-faces of an $n$-simplex determine uniquely the \textit{volume\/} of it. It is also natural to ask for a given $k\le n-2$, whether the volume of an $n$-simplex is determined by the $k$-volumes of its  $k$-faces up to finitely many possibilities. This motivates the following algebraic question. \textit{Does there exists a relation of the form~\eqref{eq_main_alg} such that $a_0(\bA_k)$ is non-zero whenever we substitute for~$\bA_k$ the set of the $k$-volumes of $k$-faces of a non-degenerate~$n$-simplex?\/}  Equivalently,  let $\tR_k$ be the $\Q$-algebra consisting of all rational functions~$\frac{a(\bA_k)}{b(\bA_k)}$  such that $b(\bA_k)$ is non-zero whenever we substitute for~$\bA_k$ the set of the $k$-volumes of $k$-faces of a non-degenerate~$n$-simplex. Then \textit{is $V$ integral over\/}~$\tR_k$?

The ring~$\tR_k$ is very complicated from the algebraic viewpoint. Hence it is reasonable to consider intermediate rings that contain~$R_k$ and are contained in~$\tR_k$. For such ring we shall take the $\Q$-subalgebra $\Lambda_k$ of~$\F$ generated by the $2{{n+1}\choose{k+1}}$ elements~$A_{i_0\ldots i_k}^{\pm 1}$, that is, the algebra  of Laurent polynomials in the elements~$A_{i_0\ldots i_k}$ (which may be algebraically dependent).

\begin{theorem}\label{theorem_Lambda}
The element $V$ is integral over~$\Lambda_2$ if and only if $n\ge 6$. Hence, for $n\ge 6$, $V$ is integral over~$\tR_2$.
\end{theorem}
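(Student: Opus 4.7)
The second assertion follows immediately from the first, because each $A_{i_0i_1i_2}$ is strictly positive on any non-degenerate simplex, hence $A_{i_0i_1i_2}^{-1}\in\tR_2$ and $\Lambda_2\subset\tR_2$; integrality over a subring entails integrality over any overring. I therefore concentrate on the biconditional.

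For the implication $n\ge 6\Rightarrow V$ integral over $\Lambda_2$, the even case is immediate from Theorem~\ref{theorem_R} together with the inclusion $R_2\subset\Lambda_2$. For the odd case $n\ge 7$, I would start from Theorem~\ref{theorem_degree_V}: since $W^2\in K_2$, clearing denominators yields
\begin{equation*}
Q(\bS_2)\,W^2=P(\bS_2)
\end{equation*}
for some polynomials $P,Q\in\Q[\bS_2]$. The plan is then to exhibit such $P,Q$ for which $Q(\bS_2)$ is a rational multiple of a monomial in the $A_{i_0i_1i_2}$. Granted this, the relation becomes
\begin{equation*}
V^4-\frac{P(\bS_2)}{Q(\bS_2)}=0,
\end{equation*}
a monic polynomial identity in $V$ whose coefficient lies in $\Lambda_2$, giving integrality.

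For the reverse implication $V$ integral over $\Lambda_2\Rightarrow n\ge 6$, I would argue by a boundedness principle: if $V^N+\sum_{i=1}^Np_i\,V^{N-i}=0$ with $p_i\in\Lambda_2$, then on any family of non-degenerate $n$-simplices along which every $A_{i_0i_1i_2}$ and every $A_{i_0i_1i_2}^{-1}$ remains in a fixed compact subset of $(0,\infty)$, the $|p_i|$ are uniformly bounded, forcing $|V|$ to be uniformly bounded as well. It therefore suffices to construct, for each $n\in\{4,5\}$, a one-parameter family of non-degenerate $n$-simplices with $2$-face areas staying in $[c,C]$ for fixed $0<c<C$ while $V\to\infty$. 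The natural place to look is along sequences where the $2$-face-area map $\Delta^n\mapsto(A_{i_0i_1i_2})$ fails to be proper; in the language of the minimal polynomial of $W$ over $K_2$ supplied by Theorem~\ref{theorem_degree_V}, these are the sequences along which the face-area vector approaches the vanishing locus of the leading coefficient while every individual $A_{i_0i_1i_2}$ stays positive and bounded.

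The step I expect to be hardest is the odd case of the forward direction: exhibiting an explicit identity that realises $Q(\bS_2)$ as a Laurent monomial in the $A_{i_0i_1i_2}$, equivalently an identity of the form $\bigl(\prod_{i_0<i_1<i_2}A_{i_0i_1i_2}^{m_{i_0i_1i_2}}\bigr)V^4=\widetilde P(\bA_2)$ with non-negative integer exponents $m_{i_0i_1i_2}$. This is delicate because for $n=5$ it must fail (so the construction cannot be completely generic in $n$), and it should exploit both the algebraic relations among the $A_{i_0i_1i_2}$ available only for $n\ge 5$ and the combinatorial richness present only for $n\ge 7$. I would try to obtain such an identity by adapting the construction that underlies Theorem~\ref{theorem_R} for even $n\ge 6$ and applying it to a suitable even-dimensional sub-face, combined with a careful choice of monomial clearing factors.
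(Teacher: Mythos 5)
The easy reductions in your plan (the second assertion via $\Lambda_2\subset\tR_2$, and even $n\ge 6$ via Theorem~\ref{theorem_R}) agree with the paper, but both remaining directions contain genuine gaps.

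For odd $n\ge 7$ you propose to produce an identity $Q(\bS_2)\,W^2=P(\bS_2)$ with $Q$ a \emph{monomial} in the $S_{ijk}$, i.e.\ to show that $W^2$ lies in the localization of $\Q[\bS_2]$ at the monomials. That is strictly stronger than what is needed and than what the paper proves: the paper shows (Theorem~\ref{theorem_leading2}) only that $P_{i_1i_2i_3i_4i_5}W$ is \emph{integral} over $R_2'$, i.e.\ satisfies a monic polynomial of possibly high degree with coefficients in $\Q[\bS_2]$; since $P_{i_1i_2i_3i_4i_5}$ is a monomial in the areas, its inverse lies in $\Lambda_2$ and integrality of $V$ over $\Lambda_2$ follows. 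Establishing that integrality is the technical core of the whole article: it goes through valuations of $\F$ (Lemma~\ref{lem_val}), the dichotomy of Lemma~\ref{lem_types} resting on the linear-algebra Lemma~\ref{lem_linear}, and the determinant estimates of Lemmas~\ref{lem_typeI}(v) and~\ref{lem_typeII}(iii). Your one-sentence plan of ``adapting the construction underlying Theorem~\ref{theorem_R} to an even-dimensional sub-face'' does not engage with any of this; you flag it yourself as the hardest step, and as written it is not a proof. Moreover it is not clear that the exact rational identity you aim for is even true.

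For the converse direction you seek a family of \emph{real non-degenerate} simplices ($n=4,5$) with all $2$-face areas in a fixed compact subset of $(0,\infty)$ and $V\to\infty$. No such family is exhibited, and the paper's own counterexamples indicate that the real category is the wrong place to look: the curve $\bs(t)$ of Section~\ref{section_negative} for $n=5$ has $W(\bs(t))=-\frac{1}{3600}a^2b^2(a+b)^2t^2+O(t^{-2})$, which is \emph{negative} for real $a,b$, so these are complex points of the configuration space, not real simplices. The boundedness principle must therefore be formulated over $\C$ --- a monic relation with Laurent-polynomial coefficients bounds $|W(\bs)|$ on the set of all \emph{complex} $\bs$ with $C_1<|S_{ijk}(\bs)|<C_2$ --- and it is this complex statement that the explicit curves contradict. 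Restricting to real non-degenerate simplices both risks losing the counterexamples altogether and, in any case, leaves the essential construction undone.
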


\begin{cor}
For $n\ge 6$, the volume of an $n$-simplex with the given set of areas of its $2$-faces can take only finitely many values.
\end{cor}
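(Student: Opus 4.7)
The plan is to derive this corollary directly from Theorem~\ref{theorem_Lambda}, which asserts that for $n\ge 6$ the element $V$ is integral over~$\tR_2$. By the definition of integrality, this means $V$ satisfies a monic polynomial relation
\begin{equation*}
V^N + c_1 V^{N-1} + \cdots + c_N = 0
\end{equation*}
with coefficients $c_i\in\tR_2$. Recall that every element of~$\tR_2$ is, by definition, a rational function $a_i(\bA_2)/b_i(\bA_2)$ in the $2$-face areas whose denominator $b_i(\bA_2)$ does not vanish when $\bA_2$ is replaced by the $2$-face areas of any non-degenerate $n$-simplex.

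With this in hand, the argument is as follows. Fix any tuple of positive real numbers prescribing the $2$-face areas of some non-degenerate $n$-simplex. Substituting these numbers for~$\bA_2$ in the integral dependence above, each coefficient $c_i$ specializes to a well-defined real number, so we obtain a concrete monic polynomial equation of degree~$N$ in the single variable~$V$. The volume of every $n$-simplex realizing this prescribed set of $2$-face areas must satisfy this same equation. Since a monic polynomial of degree~$N$ has at most $N$ roots (whether viewed over $\R$ or~$\C$), the set of possible values of~$V$ is finite, which is the claim.

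I do not expect any genuine obstacle in this step: all of the mathematical content is packed into Theorem~\ref{theorem_Lambda}, and the corollary is a pure specialization argument. The one point that must be made explicit is that the denominators~$b_i(\bA_2)$ are guaranteed to be nonzero at the chosen substitution — but this is precisely the defining property of the ring~$\tR_2$, so it requires no additional work. The real technical effort, which lies upstream in the proof of Theorem~\ref{theorem_Lambda} (integrality over the Laurent ring~$\Lambda_2$ and the resulting integrality over~$\tR_2$), is already available to us by assumption.
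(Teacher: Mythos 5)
Your proof is correct and is exactly the specialization argument the paper has in mind; the paper omits the proof precisely because the corollary is an immediate consequence of Theorem~\ref{theorem_Lambda} and the defining property of $\tR_2$. The one point you rightly note — that the denominators cannot vanish on data coming from a non-degenerate simplex — is indeed the whole content of passing from $\Lambda_2$ to $\tR_2$, and your observation that the rest is just bounding the number of roots of a monic polynomial of fixed degree is the intended reading.
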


Throughout this paper, we shall denote by~$I$ the set numerating the vertices of~$\Delta^n$, i.\,e., the set $\{0,1,\ldots,n\}$. (For abuse of notation, we do not indicate explicitly the dependance of all objects under consideration on the dimension~$n$.) 

When $V$ is not integral over~$R_2$, i.\,e., for $n=4$ or $n$ odd, it is interesting to describe the ideal $\mathfrak{a}\lhd R_2$ consisting of all~$a$ such that $aV$ is integral over~$R_2$. Again, it is natural to replace the elements~$V$ and~$A_{ijk}$ with their squares~$W$ and~$S_{ijk}$ respectively. Let $R_2'$ be the $\Q$-subalgebra of~$\F$ generated by ${n+1\choose 3}$ elements~$S_{ijk}$. Our goal is to decribe the ideal $\mathfrak{b}\lhd R_2'$ consisting of all~$b$ such that $bW$ is integral over~$R_2'$. (Obviously, an element is integral over~$R'_2$ if and only if it is integral over~$R_2$.) The best result can be obtain for~$n=4$. In this case $R_2'=\Q[\bS_2]$ is the polynomial algebra in~$10$ independent variables~$S_{ijk}$. 

\begin{theorem}\label{theorem_leading}
For $n=4$, the ideal $\mathfrak{b}\lhd\Q[\bS_2]$ consisting of all elements $b$ such that $bW$ is integral over~$\Q[\bS_2]$ is the principal ideal generated by the element
\begin{multline}\label{eq_Q}
Q=\prod_{0\le i<j\le 4}(2S_{ijk}S_{ijl}+2S_{ijl}S_{ijm}+2S_{ijm}S_{ijk}-S_{ijk}^2-S_{ijl}^2-S_{ijm}^2)\\
=\smash{\prod_{0\le i<j\le 4}}(A_{ijk}+A_{ijl}+A_{ijm})(-A_{ijk}+A_{ijl}+A_{ijm})(A_{ijk}-A_{ijl}+A_{ijm})\\(A_{ijk}+A_{ijl}-A_{ijm}),
\end{multline}
where in each multiplier, $k$, $l$, $m$ are the three elements of the set $I=\{0,1,2,3,4\}$ distinct from~$i$ and~$j$.
\end{theorem}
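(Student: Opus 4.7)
The generator of $\mathfrak{b}$ is determined by the primitive minimal polynomial
\[
P(x;\bS_2)=p_0(\bS_2)x^{32}+p_1(\bS_2)x^{31}+\dots+p_{32}(\bS_2)\in\Q[\bS_2][x]
\]
of $W$ over~$\Q(\bS_2)$, which has degree~$32$ by Theorem~\ref{theorem_degree_V}. Indeed, $bW$ is integral over $\Q[\bS_2]$ if and only if $p_0\mid p_ib^i$ for every $i=1,\ldots,32$; under the coprimality condition $\gcd(p_0,p_1)=1$ this forces $p_0\mid b$ at every prime divisor of~$p_0$, so the generator of $\mathfrak{b}$ is then $p_0$ up to a unit. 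It therefore suffices to establish (a) $p_0=c\,Q$ for some $c\in\Q^\times$; and (b) $\gcd(p_0,p_1)=1$. Note that $Q$ factors in $\Q[\bS_2]$ as a product of ten pairwise coprime irreducible polynomials $Q_{ij}$, each a non-degenerate quadratic form in the three variables $S_{ijk},S_{ijl},S_{ijm}$, so (a) and (b) can be verified prime by prime at the ten divisors $\{Q_{ij}=0\}$.

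To attack (a) and (b) I would combine the Cayley--Menger formula~\eqref{eq_CM}, which expresses $W$ as a polynomial in the ten squared edge-lengths~$s_{ij}$, with the ten quadratic Heron relations
\[
s_{ij}^2-2s_{ij}(s_{ir}+s_{jr})+(s_{ir}-s_{jr})^2+16S_{ijr}=0,\qquad \{i,j,r\}\subset I,
\]
viewed as a zero-dimensional system on the~$s_{ij}$ with $S_{ijr}$ as parameters. In principle $P$ arises by eliminating the $s_{ij}$'s from this system, but a direct resultant computation is impractical. Instead, I would proceed Galois-theoretically. By Theorem~\ref{theorem_number} the conjugates of~$W$ over~$\Q(\bS_2)$ form a set of size~$32$ indexed by congruence classes of $4$-simplices with the prescribed $2$-face areas. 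At a generic point of the divisor $\{Q_{ij}=0\}$ the "Heron triangle" with sides $A_{ijk},A_{ijl},A_{ijm}$ degenerates, and I expect this to force exactly one pair of the $32$ conjugates of~$W$ to merge, contributing a single factor~$Q_{ij}$ to~$p_0$ (via the discriminant of the elimination) and no factor to~$p_1$ (since a simple pole in a single conjugate of~$W$ forces $v_{Q_{ij}}(p_1/p_0)=-1$ via the first elementary symmetric function $-\sum_\sigma W_\sigma=p_1/p_0$). Multiplying over the ten edges yields $p_0=cQ$, and the same local analysis yields $\gcd(p_0,p_1)=1$.

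The main obstacle is the rigorous verification of this local picture at each divisor $\{Q_{ij}=0\}$: precisely, showing that the appropriate elimination discriminant has multiplicity exactly one, and that the merging of conjugates contributes to~$p_0$ but not to~$p_1$. Once this is done for a single edge, say $\{Q_{01}=0\}$, the full $S_5$-symmetry under relabelings of~$I$ propagates the conclusion to the remaining nine. A natural approach is to specialize the other nine $Q_{i'j'}$ to generic values and construct an explicit one-parameter family of $4$-simplices moving transversally across $\{Q_{01}=0\}$, reading off the Newton polygon of~$P$ at the corresponding discrete valuation; this would simultaneously certify both (a) and (b) and, with them, the theorem.
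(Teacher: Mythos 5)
Your reduction of the problem to the two claims (a) $p_0=cQ$ and (b) $\gcd(p_0,p_1)=1$ is sound (given that $\Q[\bS_2]$ is a UFD, hence integrally closed, $bW$ is integral iff $p_0\mid p_ib^i$ for all $i$, and (b) then pins $\mathfrak{b}$ down to $(p_0)$). But the proposal stops at the point where the real work begins, and the gap is twofold. First, the local analysis at each divisor $\{Q_{ij}=0\}$ --- that exactly one conjugate of $W$ acquires a simple pole, so that $v_{Q_{ij}}(p_0)=1$ and $v_{Q_{ij}}(p_1)=0$ --- is only conjectured, as you acknowledge; note also that you conflate two distinct phenomena here: a \emph{pair of conjugates merging} contributes to the discriminant of $P$, not to its leading coefficient, whereas $p_0$ picks up a factor only when a conjugate \emph{escapes to infinity}. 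Second, and more seriously, even a complete local analysis at the ten divisors $\{Q_{ij}=0\}$ would only show $Q\mid p_0$ with the right multiplicities; to conclude $p_0=cQ$ you must also rule out \emph{every other} height-one prime of $\Q[\bS_2]$ as a factor of $p_0$, i.e.\ prove that $W$ is integral over the localization of $\Q[\bS_2]$ at every prime not among the $Q_{ij}$. This global boundedness statement is the hard core of the theorem, and your proposal does not address it at all. In the paper it is exactly what the valuation machinery of sections 4--8 delivers: the classification of all valuations $v\in\fV_>$ into types I and II, and the bounds $v(QV^2)\le 1$ of Lemmas \ref{lem_typeI} and \ref{lem_typeII}(v), which via Lemma \ref{lem_val} give $Q\in\mathfrak{b}$ directly.

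It is also worth noting that the paper's endgame is much lighter than what you set up: it never computes $p_0$ or touches $p_1$. Once $QW$ is known to be integral, factoriality of $\Q[\bS_2]$ makes $\mathfrak{b}=(B)$ principal with $B\mid Q$; the $\mathfrak{S}_5$-action permutes the ten irreducible factors $Q_{ij}$ transitively, so the only invariant divisors of $Q$ (up to sign and constants) are $1$ and $Q$ itself, and $B\ne 1$ because $W$ is not integral over $\Q[\bS_2]$ (section \ref{section_negative}). This symmetry argument replaces both your multiplicity-one computation and the coprimality condition (b). If you want to pursue your route, the one-parameter degenerating families of section \ref{section_negative} are the right tool for producing the pole along each $\{Q_{ij}=0\}$, but you would still need an independent argument for the absence of other factors of $p_0$.
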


\begin{cor}
For $n=4$, the leading coefficient of any polynomial relation of the form~\eqref{eq_bW} is divisible by~$Q$. On the other hand, there exists a polynomial relation  of the form~\eqref{eq_bW} with the leading coefficient $Q^d$ for some positive integer~$d$.
\end{cor}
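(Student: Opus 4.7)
The corollary is a direct consequence of Theorem~\ref{theorem_leading}, so my plan is simply to translate the statement about the ideal $\mathfrak{b}$ into statements about relations of the form~\eqref{eq_bW}. No new computations should be needed: the two parts of the corollary correspond precisely to the two inclusions $\mathfrak{b}\subseteq(Q)$ and $(Q)\subseteq\mathfrak{b}$.

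For the first assertion, suppose we have a relation $b_0W^M+b_1W^{M-1}+\cdots+b_M=0$ of the form~\eqref{eq_bW}, with $b_0,\ldots,b_M\in\Q[\bS_2]$ and $b_0\ne0$. The standard trick is to multiply through by $b_0^{M-1}$ to produce a monic relation for $b_0W$:
\begin{equation*}
(b_0W)^M+b_1(b_0W)^{M-1}+b_0b_2(b_0W)^{M-2}+\cdots+b_0^{M-1}b_M=0.
\end{equation*}
Since each coefficient $b_0^{i-1}b_i$ lies in $\Q[\bS_2]$, this shows $b_0W$ is integral over $\Q[\bS_2]$, so $b_0\in\mathfrak{b}$. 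By Theorem~\ref{theorem_leading} we have $\mathfrak{b}=(Q)$, and therefore $Q\mid b_0$ in $\Q[\bS_2]$.

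For the second assertion, Theorem~\ref{theorem_leading} gives $Q\in\mathfrak{b}$, i.e.\ $QW$ is integral over $\Q[\bS_2]$. Hence there exist $d\ge1$ and $c_1,\ldots,c_d\in\Q[\bS_2]$ with
\begin{equation*}
(QW)^d+c_1(QW)^{d-1}+\cdots+c_d=0,
\end{equation*}
which, after collecting powers of $W$, rewrites as
\begin{equation*}
Q^dW^d+c_1Q^{d-1}W^{d-1}+c_2Q^{d-2}W^{d-2}+\cdots+c_d=0.
\end{equation*}
This is a polynomial relation of the form~\eqref{eq_bW} with leading coefficient $Q^d$, as required.

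There is no serious obstacle here; the only thing to be careful about is that in the first part we use $b_0\ne0$ (implicit in calling $b_0$ the leading coefficient) so that the rescaling $b_0W$ really does satisfy a \textit{nontrivial} monic polynomial, legitimizing the conclusion $b_0\in\mathfrak{b}$.
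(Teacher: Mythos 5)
Your proposal is correct and is exactly the standard deduction the paper intends: the corollary is stated without proof as an immediate consequence of Theorem~\ref{theorem_leading}, and your two translations (multiplying by $b_0^{M-1}$ to see that the leading coefficient lies in $\mathfrak{b}=(Q)$, and expanding a monic relation for $QW$ to get leading coefficient $Q^d$) are precisely the implicit argument. The only point worth noting, which you handle correctly, is that for $n=4$ the elements $S_{ijk}$ are algebraically independent, so $b_0$ being a nonzero polynomial is the same as $b_0\ne 0$ in $\F$, making the passage from the relation~\eqref{eq_bW} to integrality of $b_0W$ legitimate.
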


For odd $n$ we can obtain the following partial result. For any pairwise distinct $i_1,i_2,i_3,i_4,i_5\in I$, we put
\begin{equation}\label{eq_P}
P_{i_1i_2i_3i_4i_5}=\prod_{1\le p<q<r\le 5}S_{i_pi_qi_r}.
\end{equation}
If $n=5$, then we also put
\begin{equation}\label{eq_D}
D=\prod (S_{ijk}-S_{ijl}),
\end{equation}
where the product is taken over all pairwise distinct~$i,j,k,l\in I=\{0,1,2,3,4,5\}$ such that $i<j$ and~$k<l$.

\begin{theorem}\label{theorem_leading2}
If $n$ is odd and $n\ge 7$, then $P_{i_1i_2i_3i_4i_5}\in\mathfrak{b}$, that is, the element $P_{i_1i_2i_3i_4i_5}W$ is integral over~$R_2'$ for any pairwise distinct $i_1,i_2,i_3,i_4,i_5\in I$. If $n=5$, then $P_{i_1i_2i_3i_4i_5}D\in\mathfrak{b}$, that is, the element~$P_{i_1i_2i_3i_4i_5}DW$ is integral over~$R_2'$ for any pairwise distinct $i_1,i_2,i_3,i_4,i_5\in I$.
\end{theorem}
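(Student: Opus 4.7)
The plan is to exploit Theorem~\ref{theorem_degree_V}, which yields $W^2\in K_2$ for odd $n\ge 5$, so that $W$ is a root of the monic polynomial $X^2-W^2\in K_2[X]$. Consequently $PW$ satisfies $(PW)^2-P^2W^2=0$, and to prove integrality of $PW$ over~$R_2'$ it suffices to show that $P^2W^2$ is integral over~$R_2'$ inside the field~$K_2$; similarly the case $n=5$ reduces to integrality of $P^2D^2W^2$. I would verify these containments using the valuation criterion: $y\in K_2$ is integral over~$R_2'$ iff $v(y)\ge 0$ for every valuation $v$ of~$K_2$ with $v(R_2')\ge 0$. Since $\Q(\bell)$ is algebraic over~$K_2$ for $n\ge 5$, every such $v$ extends to a valuation $\tilde{v}$ of~$\Q(\bell)$, and the condition $\tilde{v}(S_{ijk})\ge 0$ for all $\{i,j,k\}$ translates, through Heron's formula for each 2-face, into constraints on the values $\tilde{v}(s_{ij})$. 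If $\tilde{v}(s_{ij})\ge 0$ for every edge, then $\tilde{v}(W)\ge 0$ is automatic from the Cayley--Menger formula~\eqref{eq_CM}; the non-trivial case is when some edges degenerate while all 2-face areas stay bounded.

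The main estimate I would then establish is that along any such degeneration the drop in $\tilde{v}(W)$ is compensated by the growth of $\tilde{v}(P)$ (respectively $\tilde{v}(PD)$ for $n=5$). For this I would use the orthogonal decomposition $W=\binom{n}{4}^{-2}W_FW_\perp$, in which $F$ is the 4-face spanned by the chosen vertices $p_{i_1},\ldots,p_{i_5}$ and $W_\perp$ is the squared $(n{-}4)$-volume of the orthogonal projection of the remaining vertices onto the orthogonal complement of $\mathrm{aff}(F)$. Theorem~\ref{theorem_leading}, applied to the 4-face~$F$, controls $W_F$ via the leading coefficient $Q_F$, and combined with the Heron-inequality analysis this forces $\tilde{v}(W_F)\ge -\tilde{v}(P)$ whenever all the $S_{i_pi_qi_r}$ stay finite and non-vanishing. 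For $n\ge 7$ the orthogonal complement has dimension at least three, and $\tilde{v}(W_\perp)$ is controlled by the 2-face areas involving vertices outside~$F$ together with the edges of~$F$; this will be enough to conclude without any extra factor beyond~$P$.

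The main obstacle will be the case $n=5$, in which the orthogonal complement is one-dimensional and $W_\perp=h^2$ is merely the squared height of the single extra vertex $p_{i_6}$ above~$F$. Here the height~$h$, viewed as an algebraic function of the edges of~$F$ and the ten 2-face areas containing~$p_{i_6}$, is generically determined but becomes ambiguous along the loci $S_{ijk}=S_{ijl}$, where two triangles sharing an edge of~$F$ have the same area; at such a point the simplex admits an additional algebraic symmetry swapping distinct configurations without changing the 2-face-area data. The factor~$D$ is exactly the product over all these differences, and its role in the valuation analysis will be to absorb this extra ambiguity of~$h^2$. Verifying explicitly that every vanishing of a factor $S_{ijk}-S_{ijl}$ is accompanied by a compensating pole of~$W$ will be the most delicate part of the argument.
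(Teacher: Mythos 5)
Your overall framework (reduce to a valuation-theoretic bound of the form $\tilde v(W)\ge -\tilde v(P)$, resp.\ $\ge -\tilde v(PD)$, over all valuations that are non-negative on the $S_{ijk}$) is the same as the paper's, and the initial reduction via $W^2\in K_2$ is harmless. But the central estimate you propose is false. You claim that Theorem~\ref{theorem_leading}, applied to the $4$-face $F$ spanned by $p_{i_1},\dots,p_{i_5}$, forces $\tilde v(W_F)\ge -\tilde v(P)$ whenever the ten areas $S_{i_pi_qi_r}$ stay finite and non-vanishing, i.e.\ whenever $\tilde v(P)=0$; that would mean $W_F$ is bounded whenever the areas of its $2$-faces are bounded and bounded away from zero. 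The paper's own Section~\ref{section_negative} exhibits a curve of $4$-simplices along which all $S_{ijk}^{\pm1}$ stay bounded while $W\to\infty$. The point is that Theorem~\ref{theorem_leading} controls $W_F$ by $Q_F$, the product of the Heron factors $(A_{ijk}+A_{ijl}+A_{ijm})(-A_{ijk}+A_{ijl}+A_{ijm})\cdots$, not by $P=\prod S_{i_pi_qi_r}$; these two polynomials have incomparable valuations, since a factor of $Q_F$ such as $A_{ijk}+A_{ijl}-A_{ijm}$ can degenerate while every individual area stays of order one (this is exactly what happens in the Section~\ref{section_negative} example, where $a/2+b/2-(a+b)/2\to 0$). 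Consequently the factorization $W=\binom n4^{-2}W_FW_\perp$ cannot be estimated factor by factor: for the dangerous valuations $\tilde v(W_F)$ genuinely drops below $-\tilde v(P)$, and only the joint behaviour of $W_F$ with $W_\perp$ — i.e.\ with the faces involving the remaining $n-4$ vertices — can compensate. Your sketch gives no mechanism for extracting that compensation from $W_\perp$, and asserting that $W_\perp$ ``is controlled by the 2-face areas involving vertices outside $F$'' is not an argument.

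What the paper actually does at this point is a global, not face-by-face, analysis: from the boundedness of \emph{all} $A_{ijk}$ it deduces (Lemmas~\ref{lem_triangle}--\ref{lem_types}) that the square roots $\ell_{ij}$ satisfy near-degenerate triangle relations $\varepsilon_{ij}\ell_{ij}+\varepsilon_{jk}\ell_{jk}+\varepsilon_{ki}\ell_{ki}=O(\alpha^{-1})$, which by a linear-algebra lemma forces one of two rigid combinatorial structures (type I or type II) on the whole vertex set; the Cayley--Menger determinant is then estimated by row/column operations adapted to that structure (Lemmas~\ref{lem_typeI}(v), \ref{lem_typeII}(iii),(iv)). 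Note also that this forces one to work with valuations of $\F\supset\Q(\bell)$ and with the $\ell_{ij}$ themselves, not merely with the values $\tilde v(s_{ij})$ as in your sketch: the constraint coming from Heron's formula is on signed sums of the $\ell_{ij}$, and is invisible at the level of the $s_{ij}$ alone. Your intuition about the role of $D$ for $n=5$ (absorbing the ambiguity created when $S_{ijk}=S_{ijl}$) points in the right direction, but in the paper the mechanism is concrete: in the type II configuration with two $2$-element blocks one shows $S_{ijk}-S_{ijl}=O(\alpha^{-2}\beta^{-2})$, so $v(D)$ supplies exactly the missing factor. As it stands, the proposal's key inequality is contradicted by the paper's own counterexample, so the argument does not go through.
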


The paper is organized as follows. In section~\ref{section_degree} we prove Theorems~\ref{theorem_degree_V}, \ref{theorem_degree_edge} and~\ref{theorem_number}. In section~\ref{section_negative} we prove the ``negative'' parts of Theorems~\ref{theorem_R} and~\ref{theorem_Lambda}, namely, we prove that $V$ is not integral over~$R_2$ if $n$ is odd, and $V$ is not integral over~$\Lambda_2$ if $n=4$ or~$5$. The proofs of ``positive'' parts of  Theorems~\ref{theorem_R} and~\ref{theorem_Lambda} use theory of \textit{valuation of fields\/} or, equivalently, \textit{places of fields\/}. Places of fields were first used in the metric theory of polyhedra by Connelly, Sabitov, and Walz~\cite{CSW97} to obtain another proof of Sabitov's theorem on the volume.  Then they were used by the author~\cite{Gai11},~\cite{Gai12} to generalize Sabitov's theorem to higher dimensions, and by S.\,A.\,Gaifullin and the author~\cite{GaGa13} to study flexible periodic polyhedral surfaces. In our situation the language of \textit{valuations\/} turns out to be more convenient than the language of \textit{places\/}. In sections~\ref{section_valuations}--\ref{section_typeII} we study valuations of the field~$\F$. Then in section~\ref{section_final} we apply the obtained results to prove Theorems~\ref{theorem_R}, \ref{theorem_Lambda}, \ref{theorem_leading} and~\ref{theorem_leading2}.

The author is grateful to S.\,A.\,Gaifullin and I.\,Kh.\,Sabitov for useful discussions.

\section{Degrees of minimal polynomials}\label{section_degree}

Recall that $K_1$ is the subfield of~$\Q(\bell)$ generated over~$\Q$ by all~$s_{ij}=\ell_{ij}^2$, and  $K_2$ is the subfield of~$\Q(\bell)$ generated over~$\Q$ by all~$S_{ijk}=A_{ijk}^2$. We have $K_2\subset K_1$, since by Heron's formula, 
\begin{equation}\label{eq_reg_map}
S_{ijk}=\frac{1}{16}(2s_{ij}s_{jk}+2s_{jk}s_{ki}+2s_{ki}s_{ij}-s_{ij}^2-s_{jk}^2-s_{ki}^2),
\end{equation}
 
\begin{theorem}\label{theorem_degree}
The degree of~$K_1$ over~$K_2$ is equal to~$64$ if\/ $n=4$, and is equal to~$2$ if\/ $n>4$.
\end{theorem}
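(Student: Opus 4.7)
The plan is to interpret $[K_1:K_2]$ as the generic degree of the polynomial map
\[
\Phi\colon \mathbb{A}^{\binom{n+1}{2}}\to\mathbb{A}^{\binom{n+1}{3}}, \qquad (s_{ij})\mapsto (S_{ijk}),
\]
defined by Heron's formula~\eqref{eq_reg_map}. Since $K_1/K_2$ is a finite algebraic extension (as shown in the Introduction via the Mohar--Rivin theorem applied to $4$-faces), this generic degree equals $[K_1:K_2]$, and $\Phi$ is dominant onto its image. The cases $n\ge 5$ and $n=4$ are handled by different arguments.

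For $n\ge 5$, the plan is to prove $s_{ij}s_{kl}\in K_2$ for every pair of edges $\{i,j\}$ and $\{k,l\}$ (including $\{i,j\}=\{k,l\}$). Granting this, one has $s_{ij}^2\in K_2$, so each $s_{ij}$ is at most quadratic over $K_2$; moreover $s_{ij}/s_{kl}=s_{ij}^2/(s_{ij}s_{kl})\in K_2$, so all $s_{ij}$ lie in the same quadratic extension $K_2(s_{01})$ of $K_2$. Hence $K_1=K_2(s_{01})$ and $[K_1:K_2]\le 2$. To produce the product identity, I would subtract two Heron equations at the triangles $\{i,j,m\}$ and $\{i,j,m'\}$ for two extra vertices $m, m'\notin\{i,j,k,l\}$ (which exist since $n\ge 5$): the difference is linear in $s_{ij}$ and expresses it as a rational function of the $s_{am}, s_{am'}$ ($a\in\{i,j\}$) and two $S$-values. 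Doing the same for $s_{kl}$, taking the product, and then combining with further Heron relations of the triangles on $\{i,j,k,l,m,m'\}$ to eliminate all remaining edges, one should land in $K_2$. The matching lower bound $[K_1:K_2]\ge 2$ reduces to $s_{01}\notin K_2$, which I would establish by exhibiting two non-congruent $n$-simplices with the same $2$-face areas, e.g.\ via a small real deformation argument in the over-determined fiber of~$\Phi$.

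For $n=4$, source and target of $\Phi$ have equal dimension $\binom{5}{2}=\binom{5}{3}=10$. The plan is to show the generic degree equals $64$. For the upper bound $[K_1:K_2]\le 64$, I would fix $s_{01}$ as a candidate generator, express the other nine $s_{ij}$ as algebraic functions of $s_{01}$ and the $S$'s by resolving Heron relations triangle-by-triangle, and compute the resulting minimal polynomial of $s_{01}$ over $K_2$ by iterated resultants. The sign-flip structure visible in the factorization
\[
16\,S_{ijk}=(\ell_{ij}+\ell_{jk}+\ell_{ki})(-\ell_{ij}+\ell_{jk}+\ell_{ki})(\ell_{ij}-\ell_{jk}+\ell_{ki})(\ell_{ij}+\ell_{jk}-\ell_{ki})
\]
predicts degree $2^6=64$ (rather than the naive B\'ezout bound $2^{10}$). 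For the matching lower bound, I would construct $64$ distinct preimages of a generic point by exhibiting six independent ``mirror-type'' involutions of the fiber, each swapping certain subsets of edges to their other quadratic roots in a way consistent with all ten Heron relations simultaneously.

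The main obstacle is the explicit identification of the relevant algebraic identities in both cases. For $n\ge 5$, the product identity $s_{ij}s_{kl}\in K_2$ must be derived in closed form and its correctness verified by a careful combinatorial manipulation of Heron relations; for $n=4$, the six ``mirror-type'' involutions generating the Galois group of the extension must be constructed explicitly, and this is subtle because any single-edge flip generically destroys the Heron relations of neighbouring triangles, forcing the symmetries to involve coordinated swaps of several edges at once.
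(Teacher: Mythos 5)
Your overall framework --- identifying $[K_1:K_2]$ with the generic degree of the Heron map $\varphi\colon\C^{\binom{n+1}{2}}\to\C^{\binom{n+1}{3}}$ --- is the same as the paper's, but the substantive steps are missing, and two of your proposed substitutes would fail. For $n\ge 5$, the identity $s_{ij}s_{kl}\in K_2$ is not available as an input: it is Theorem~\ref{theorem_degree_edge}, which the paper \emph{deduces from} Theorem~\ref{theorem_degree} (via $K_2=K_1^{ev}$), and your sketch of obtaining it by eliminating edges among Heron relations is precisely the hard content, left unproved. More seriously, your lower bound ``exhibit two non-congruent real $n$-simplices with the same $2$-face areas'' cannot work: by Corollary~\ref{cor_generic}, a generic real simplex with $n\ge5$ is \emph{uniquely} determined by its $2$-face areas, so no such pair exists generically, and a pair lying on a proper subvariety proves nothing about the generic fibre (the rational function you are trying to contradict may be undefined there). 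The second point of the generic fibre is in fact the sign reversal $s_{ij}\mapsto -s_{ij}$, which is never realizable by a Euclidean simplex; the cheap and correct witness for $s_{01}\notin K_2$ is the parity observation $K_2\subseteq K_1^{ev}\subsetneq K_1$.

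For $n=4$ neither bound is established. The ``iterated resultants'' computation is not performed, and the heuristic $2^6=64$ coming from six independent commuting involutions does not match the actual fibre structure: the paper computes the fibre over the equiareal point $y_0$ (all $S_{ijk}=3/16$) explicitly in Lemma~\ref{lem_pre-images} and finds $2r_4=52$ points $x_{\omega,\sigma}$ indexed by partial pairings of $\{0,\dots,4\}$ together with a global sign, plus $12$ points $x_\gamma$ indexed by $5$-cycles --- a $64$-element set that is not an orbit of $(\Z/2)^6$, so the coordinated edge-flips you hope to construct do not exist in the form described. What actually carries the paper's proof is a classification of \emph{complex} equiareal simplices (a bilinear-form analogue of McMullen's theorem, Lemmas~\ref{lem_tetrahedron}--\ref{lem_pre-images}), plus the verification via Lemmas~\ref{lem_diff} and~\ref{lem_diff2} that $y_0$ is a regular value when $n=4$, and that for $n>4$ the variety $Y$ breaks near $y_0$ into $r_n$ smooth branches each covered twice by the sign flip. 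None of this is present in, or replaceable by, the proposal as written.
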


Consider the affine space~$X=\C^{n+1\choose 2}$ with the coordinates $s_{ij}=s_{ji}$, and the affine space~$\C^{n+1\choose 3}$ with the coordinates~$S_{ijk}$, where the order of  $i,j,k$ is irrelevant. Formula~\eqref{eq_reg_map} yields the regular morphism $\varphi\colon X\to\C^{n+1\choose 3}$. Let $Y$ be the Zariski closure of its image. The fields of rational functions $\Q(X)$ and $\Q(Y)$ are naturally isomorphic to $K_1$ and $K_2$ respectively. Hence the degree~$|K_1/K_2|$ of $K_1$ over~$K_2$ is equal to the degree of the mapping $\varphi\colon X\to Y$, that is, to the  number of pre-images of a regular value of~$\varphi$. A point $y\in Y$ is a regular value of~$\varphi$ if

(1) $y$ is a smooth point of~$Y$,

(2) $y$ has a neighbourhood~$U$ (in the analytic topology) such that $\varphi^{-1}(U)$ is bounded in $X=\C^{n+1\choose 2}$,

(3) $y$ has finitely many pre-images~$x_1,\ldots,x_d$ under~$\varphi$ at each of which the differential $D\varphi\colon T_{x_i}X\to T_yY$ is bijective. 

Let $y_0\in\C^{n+1\choose 3}$ be the point with all coordinates~$S_{ijk}$ equal to~$3/16$, that is, to the square of the area of a regular triangle with edge~$1$. We have $y_0\in Y$, since $y_0=\varphi(x_0)$ for the point $x_0\in X$ with all coordinates $s_{ij}$ equal to~$1$. Let us describe all pre-images of~$y_0$ under the mapping~$\varphi$.

A simplex $\Delta^n\subset\R^n$ is called \textit{equiareal\/} if the areas of all its $2$-faces are equal to each other. McMullen~\cite{McM00} proved that for $n\ge 4$, any equiareal simplex is regular. This means that the point~$x_0$ is the only pre-image of~$y_0$ such that the set of its coordinates~$s_{ij}$ can be realised as the set of the squares of edge lengths of a simplex $\Delta^n\subset\R^n$. We are interested in \textit{all\/}, realisable or not, pre-images of~$y_0$ under~$\varphi$. This  is very close to the problem of describing all \textit{complex\/} equiareal simplices $\Delta^n\subset\C^n$. Here, $\C^n$ is endowed with the standard bilinear (not Hermitian) inner product, the squares of edge lengths of~$\Delta^n$ are defined by means of this inner product, and the squares of areas of $2$-faces are defined by~\eqref{eq_reg_map}.  (By definition, a simplex $\Delta^n\subset\C^n$ is the convex hull of $n+1$ points affinely independent over~$\C$.) McMullen's technique is still useful in this setting. However, we face some principally new phenomena caused by the existence of isotropy vectors, and the answer is non-trivial.  

A \textit{partial pairing\/}~$\omega$ on~$I$ is a set of non-intersecting two-element subsets of~$I$. The set~$I$ consists of $n+1$ elements, hence, the number of partial pairings on~$I$ is 
$$
r_n=\sum_{k=0}^{[\frac{n+1}{2}]}\frac{(n+1)!}{2^k\cdot k!(n+1-2k)!}.
$$
Let $\omega$ be a partial pairing on~$I$, and let $\sigma$ be either $1$ or $-1$. Consider a point $x_{\omega,\sigma}\in  X$ with coordinates $s_{ij}=\sigma$ for $\{i,j\}\notin\omega$ and $s_{ij}=3\sigma$ for $\{i,j\}\in\omega$. It can be easily checked that $\varphi(x_{\omega,\sigma})=y_0$. Obviously, $x_{\emptyset,1}=x_0$.

Now, suppose $n=4$. Let $\gamma$ be a set of two-element subsets of $I=\{0,1,2,3,4\}$ that form a $5$-cycle. This means that $$\gamma=\{\{i_0,i_1\},\{i_1,i_2\},\{i_2,i_3\},\{i_3,i_4\},\{i_4,i_0\}\}$$ for a permutation $i_0,i_1,i_2,i_3,i_4$ of $0,1,2,3,4$. Notice that all two-element subsets of~$I$ not belonging to~$\gamma$ also form a $5$-cycle. Consider a point $x_{\gamma}\in  X$ with coordinates $s_{ij}=\sqrt{-3/5}$ for $\{i,j\}\in\gamma$ and $s_{ij}=-\sqrt{-3/5}$ for $\{i,j\}\notin\gamma$. It is easy to see that $\varphi(x_{\gamma})=y_0$.
The number of $5$-cycles~$\gamma$ in~$I$ is equal to~$12$.

\begin{lem}\label{lem_pre-images}
For any~$n\ge 5$, the point $y_0$ has exactly $2r_n$ pre-images under~$\varphi$, namely, the points~$x_{\omega,\sigma}$. For $n=4$, the point $y_0$ has exactly $64$ pre-images under~$\varphi$, namely, the $2r_4=52$ points~$x_{\omega,\sigma}$ and the $12$ points~$x_{\gamma}$. Moreover,  for all sufficiently small $\varepsilon>0$, if a point~$y\in Y$ is coordinatewise  $\varepsilon$-close to~$y_0$, then any pre-image of~$y$  is $O(\sqrt{\varepsilon})$-close to one of the pre-images of~$y_0$. 
\end{lem}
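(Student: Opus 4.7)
The plan is to establish the lemma in three stages: direct verification that the claimed points are pre-images; proof that no further pre-images exist; and the local quantitative estimate.

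For the first stage, I would substitute each candidate into Heron's formula \eqref{eq_reg_map} and check that $S_{ijk} = 3/16$ for every triple. For $x_{\omega,\sigma}$, the matching condition on $\omega$ guarantees that each triple contains at most one pair of $\omega$, so only two configurations arise (all three edges equal to $\sigma$, or one edge $3\sigma$ and two edges $\sigma$), and in both cases \eqref{eq_reg_map} evaluates to $3/16$. For the exceptional points $x_\gamma$ with $n=4$, since the $5$-cycle $\gamma$ is triangle-free, every triangle in $I = \{0,\ldots,4\}$ meets $\gamma$ in either one or two edges, and substitution with $\alpha^2 = -3/5$ again yields $S_{ijk} = 3/16$.

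The second and main stage is the completeness of the list. My approach extends McMullen's analysis of equiareal real simplices \cite{McM00} to the complex setting. Write the defining system $F_{ijk}(s) := 2s_{ij}s_{jk} + 2s_{jk}s_{ki} + 2s_{ki}s_{ij} - s_{ij}^2 - s_{jk}^2 - s_{ki}^2 - 3 = 0$ for every triple. Restricting to a single $4$-subset, a McMullen-style argument on the formal Gram matrix $G_{ij} = (s_{0i} + s_{0j} - s_{ij})/2$ of the edge vectors from a base vertex should establish a dichotomy: on that $4$-subset, the $s_{ij}$ either follow one of the patterns $x_{\omega,\sigma}$ or take only the two values $\pm\alpha$ with $\alpha^2 = -3/5$, partitioned along a $5$-cycle and its complement. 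For $n \ge 5$, the second alternative must be ruled out: adjoining any sixth vertex produces equations $F_{ijv}(s) = 0$ that are overdetermined on the new edges $s_{iv}$, and the only way to avoid inconsistency is to fall back into the matching form. Counting then gives $r_n$ matchings times two choices of sign for $n \ge 5$, plus the twelve $5$-cycles on five elements when $n = 4$. The main obstacle I foresee is this propagation step: systematically verifying that isotropic cycle-type configurations cannot persist once a sixth vertex is adjoined requires careful algebraic manipulation of the $F_{ijk}$'s in the presence of $\alpha^2 = -3/5$, together with a delicate adaptation of McMullen's combinatorial argument to allow for null vectors in the complex bilinear form.

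For the third stage, the local $O(\sqrt{\varepsilon})$ estimate follows from the polynomial (degree $2$) nature of $\varphi$. Fixing a pre-image $x_*$ of $y_0$, I would expand $\varphi(x_* + t) = y_0 + L(t) + Q(t)$ with $L = D\varphi_{x_*}$ linear and $Q$ homogeneous of degree $2$. Given $\varphi(x_* + t) = y$ with $\|y - y_0\| \le \varepsilon$, decomposing $t = t_0 + t_1$ with $t_0 \in \ker L$, the range-of-$L$ component yields $\|t_1\| = O(\varepsilon + \|t\|^2)$, while the projection onto a complement of the range of $L$ gives a quadratic equation of the form $Q(t_0) + \text{lower-order terms} = O(\varepsilon)$. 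Nondegeneracy of this projected quadratic form on $\ker L$, which can be checked at each pre-image, then forces $\|t_0\| = O(\sqrt{\varepsilon})$ and hence $\|t\| = O(\sqrt{\varepsilon})$. The boundedness claim, namely that every pre-image of $y$ lies near \emph{some} $x_*$ rather than escaping to infinity, follows from a standard compactification argument combined with the finiteness of $\varphi^{-1}(y_0)$ in $X$.
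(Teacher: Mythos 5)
Your first two stages track the paper's strategy: direct substitution for the candidate pre-images, then a McMullen-style dichotomy on each $4$-subset (the paper's Lemma~\ref{lem_tetrahedron}, proved with the rotation $\rho$ and a careful handling of the isotropic case), followed by a case analysis on $5$-subsets to rule out the cycle-type configuration for $n\ge 5$. You correctly identify the isotropy issue as the main obstacle, which the paper resolves by showing that at least one $\zeta_i=\eta_0+\eta_i$ is non-isotropic (else the Gram matrix~\eqref{eq_G} would be non-degenerate, contradicting linear dependence of the $\eta_i$). So far the sketch is reasonable and essentially the same route.

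The genuine gap is in your third stage, specifically the boundedness claim. You write that the fact that every pre-image of a nearby $y$ stays near \emph{some} pre-image of $y_0$ ``follows from a standard compactification argument combined with the finiteness of $\varphi^{-1}(y_0)$.'' This is not a standard fact and in general it is false: finiteness of a single fiber of a polynomial map (even with injective differential at every point of that fiber) does not imply local properness. For instance, $\varphi(x,y)=(x,xy)\colon\C^2\to\C^2$ has a single pre-image $(1,1)$ over $(1,1)$ with bijective differential there, yet the pre-image of $(t,1)$ escapes to infinity as $t\to 0$. Since $\varphi$ here is quadratic, it does not extend to a morphism on the natural compactification $\mathbb{P}^{\binom{n+1}{2}}$, so you cannot conclude properness by a compactness argument without additional work. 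The paper instead proves a \emph{quantitative} version of the tetrahedron lemma (Lemma~\ref{lem_tetrahedron_est}) whose estimates involve $M=\max|s_{ij}|$; running the same case analysis gives that any pre-image $x$ of a $y$ that is $\varepsilon$-close to $y_0$ is $O(M\sqrt{\varepsilon})$-close to one of the discrete pre-images. Since those discrete pre-images have coordinates bounded by an absolute constant, this estimate is \emph{self-improving}: $M\le\textrm{const}+O(M\sqrt{\varepsilon})$ forces $M$ bounded for small $\varepsilon$, and then the factor $M$ can be absorbed into the constant. This is the crux of the lemma and cannot be replaced by a black-box compactification argument; you would need to supply the analogue of Lemma~\ref{lem_tetrahedron_est}.

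A smaller remark: your local Taylor-expansion argument decomposes $t=t_0+t_1$ with $t_0\in\ker L$ and invokes nondegeneracy of a quadratic form on $\ker L$, but in this problem $\ker D\varphi|_{x_*}=0$ at every pre-image $x_*$ (this is Lemma~\ref{lem_diff}), so the decomposition is trivial and the local estimate would be $O(\varepsilon)$. The $\sqrt{\varepsilon}$ in the statement is not a local phenomenon at $x_*$; it is what the global quantitative tetrahedron argument delivers without first knowing that $x$ lies in a small neighborhood of some $x_*$. This again points to the boundedness step being the substantive content that your outline does not supply.
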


We start with the description of complex equiareal tetrahedra $\Delta^3\subset\C^3$. In the real case it is well known that each pair of opposite edges of an equiareal  tetrahedron have the same lengths. In complex case, our exposition mimics the proof of this fact due to McMullen~\cite[Theorem~4.2]{McM00}, though the answer is different.

\begin{lem}\label{lem_tetrahedron}
Let $\Delta^3\subset\C^3$ be a tetrahedron with vertices $p_0,p_1,p_2,p_3$ and squares of edge lengths $s_{ij}$ such that the squares of areas of all its $2$-faces are equal to~$3/16$. Then  either $s_{12}=s_{03}$, $s_{13}=s_{02}$, and $s_{23}=s_{01}$ or there is a cyclic permutation $i,j,k$ of\/ $1,2,3$ such that $s_{ij}=s_{0k}$, $s_{ik}=s_{0j}$, and $s_{jk}+s_{0i}-2s_{0j}-2s_{0k}=0$.
 \end{lem}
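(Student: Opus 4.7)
I would reduce the four face-area conditions to a symmetric polynomial system by splitting each edge-square into its opposite-edge sum and difference. Set $A=s_{01}+s_{23}$, $B=s_{02}+s_{13}$, $C=s_{03}+s_{12}$ and $D_1=s_{23}-s_{01}$, $D_2=s_{13}-s_{02}$, $D_3=s_{12}-s_{03}$, and introduce $X=-A+B+C$, $Y=A-B+C$, $Z=A+B-C$. With $T(a,b,c)=2(ab+bc+ca)-(a^2+b^2+c^2)$, Taylor-expanding each face equation $T(\cdot,\cdot,\cdot)=3$ around $(A,B,C)$ gives
\begin{equation*}
T(A,B,C)+2(\eta_1 D_1 X+\eta_2 D_2 Y+\eta_3 D_3 Z)+2(\eta_1\eta_2 D_1 D_2+\eta_2\eta_3 D_2 D_3+\eta_1\eta_3 D_1 D_3)-(D_1^2+D_2^2+D_3^2)=12,
\end{equation*}
where $(\eta_1,\eta_2,\eta_3)$ runs over the four sign-triples in $\{\pm 1\}^3$ with $\eta_1\eta_2\eta_3=1$, one for each face.

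Two combinations are decisive. Summing the four face equations kills all $\eta$-dependent terms and yields $T(A,B,C)-(D_1^2+D_2^2+D_3^2)=12$, where additionally $T(A,B,C)=XY+YZ+ZX$. Subtracting two faces with common $\eta_1$ gives a linear equation in $(D_2,D_3)$; the two such equations form a $2\times 2$ system with coefficient determinant $2(D_1^2-YZ)$, so either $D_2=D_3=0$ or $D_1^2=YZ$. The analogous pairings fixing $\eta_2$ or $\eta_3$ yield the companion dichotomies $D_2^2=XZ$ and $D_3^2=XY$.

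A short case analysis on the triple $(D_1,D_2,D_3)$ finishes the proof. \emph{All zero}: the three opposite-edge pairs coincide, the first alternative. \emph{All nonzero}: the three discriminants sum to $D_1^2+D_2^2+D_3^2=YZ+XZ+XY=T(A,B,C)$, contradicting the sum identity since $12\neq 0$. \emph{Exactly two nonzero}, say $D_1=0$ and $D_2,D_3\neq 0$: the linear system fixing $\eta_2$ then reduces to $D_3(Z\pm D_2)=0$, forcing $D_2=0$, a contradiction. \emph{Exactly one nonzero}, say $D_1\neq 0$ and $D_2=D_3=0$: the residual linear equations collapse to $D_1X=0$, so $X=0$, i.e., $A=B+C$; translating back gives $s_{13}=s_{02}$, $s_{12}=s_{03}$, and $s_{23}+s_{01}=2(s_{02}+s_{03})$, the second alternative for $(i,j,k)=(1,2,3)$. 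The cases where $D_2$ or $D_3$ is the unique nonzero difference yield the cyclic permutations $(2,3,1)$ and $(3,1,2)$.

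The main obstacle is the ``all three $D_i$ nonzero'' case, a genuinely new phenomenon over $\C$ (in McMullen's real argument, positivity excludes it automatically). Collapsing it depends on recognizing the identity $T(A,B,C)=XY+YZ+ZX$ and combining it with the sum identity; together with the discriminant factorization $2(D_i^2-X_jX_k)$ for each pair's linear system, this is the main algebraic content that carries the real-case strategy into the complex setting.
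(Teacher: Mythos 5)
Your proof is correct, but it takes a genuinely different route from the paper's. The paper follows McMullen's geometric argument: it forms the face normals $\eta_0,\ldots,\eta_3$, shows via a Gram-matrix nondegeneracy argument that some $\zeta_i=\eta_0+\eta_i$ is non-isotropic, and uses the rotation by $\pi$ about $\zeta_i$ to permute the normals, which yields $s_{ij}=s_{0k}$ and $s_{ik}=s_{0j}$; the last condition then comes from factoring the remaining Heron equation as $(s_{jk}-s_{0i})(s_{jk}+s_{0i}-2s_{0j}-2s_{0k})=0$. You instead treat the four Heron equations as a purely algebraic system in the opposite-edge sums $A,B,C$ and differences $D_1,D_2,D_3$; I checked the sign bookkeeping (the four faces do correspond to the sign triples with $\eta_1\eta_2\eta_3=1$), the sum identity, the identity $T(A,B,C)=XY+YZ+ZX$, the determinant $2(D_1^2-YZ)$ of the paired-difference system, and the four-way case analysis, and all of it is sound; the ``exactly one nonzero'' case lands exactly on the stated second alternative. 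Each approach buys something: the paper's makes the geometric mechanism (isotropic $\zeta_i$) visible and is the source of the ``new phenomenon'' remark, whereas yours needs no tetrahedron at all --- it applies to an arbitrary $6$-tuple of complex numbers satisfying the four Heron equations, which would let you bypass the realizability/density reduction the paper has to make in the quantitative companion Lemma~\ref{lem_tetrahedron_est}, and your linear-algebra dichotomies perturb to estimates rather directly. One cosmetic remark: your two alternatives overlap (all $D_i=0$ is compatible with some $X_j=0$), but the lemma's ``either\dots or'' is inclusive, so nothing is lost.
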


\begin{proof}
We put $\xi_i=p_i-p_0$, $i=1,2,3$. We denote by $(\cdot,\cdot)$, $[\cdot,\cdot]$, and $\langle\cdot,\cdot,\cdot\rangle$ the standard bilinear inner product, the standard vector product, and the standard mixed product in~$\C^3$ respectively. Then $\eta_1=[\xi_2,\xi_3]$, $\eta_2=[\xi_3,\xi_1]$, $\eta_3=[\xi_1,\xi_2]$, and $\eta_0=[\xi_3-\xi_1,\xi_2-\xi_1]$ are the normal vectors to the faces of~$\Delta^3$ opposite to the vertices $p_1$, $p_2$, $p_3$, and~$p_0$ respectively such that $(\eta_i,\eta_i)=3/8$. Obviously, $\eta_0+\eta_1+\eta_2+\eta_3=0$. Besides, we have
\begin{equation}\label{eq_vect_prod}
\xi_1=\frac{[\eta_2,\eta_3]}{\langle\xi_1,\xi_2,\xi_3\rangle}\,,\quad
\xi_2=\frac{[\eta_3,\eta_1]}{\langle\xi_1,\xi_2,\xi_3\rangle}\,,\quad
\xi_3=\frac{[\eta_1,\eta_2]}{\langle\xi_1,\xi_2,\xi_3\rangle}\,.
\end{equation}
We put $\zeta_i=\eta_0+\eta_i$, $i=1,2,3$. Let us prove that at least one of the three vectors $\zeta_1$, $\zeta_2$, and~$\zeta_3$ is not isotropic. Assume the converse, i.\,e., $(\zeta_i,\zeta_i)=0$, $i=1,2,3$. Then the Gram matrix of the vectors~$\eta_0,\eta_1,\eta_2,\eta_3$ is equal to
\begin{equation}\label{eq_G}
G=\frac{3}{8}\left(\begin{array}{rrrr}
1&-1&-1&-1\\
-1&1&-1&-1\\
-1&-1&1&-1\\
-1&-1&-1&1
\end{array}\right).
\end{equation}
This matrix is non-degenerate, which is impossible, since the vectors~$\eta_0,\eta_1,\eta_2,\eta_3$ are linearly dependent. Hence, there is an~$i$ such that $(\zeta_i,\zeta_i)\ne 0$. Let $i,j,k$ be the cyclic permutation of $1,2,3$ starting from~$i$. Let $\rho$ be the rotation by~$\pi$ around the vector~$\zeta_i$, i.\,e., the linear operator given by
\begin{equation}\label{eq_rho}
\rho(\theta)=\frac{2(\theta,\zeta_i)}{(\zeta_i,\zeta_i)}\zeta_i-\theta.
\end{equation}
Then $\rho\in SO(3,\C)$ and $\rho^2=\mathrm{id}$. It is easy to check that $\rho$ takes the vectors~$\eta_0$, $\eta_i$, $\eta_j$, and~$\eta_k$ to the vectors~$\eta_i$, $\eta_0$, $\eta_k$, and~$\eta_j$ respectively. Using~\eqref{eq_vect_prod}, we obtain that
\begin{equation}\label{eq_rho_xi}
\rho(\xi_j)=\xi_k-\xi_i,\qquad\rho(\xi_k)=\xi_j-\xi_i.
\end{equation}
Therefore,
$
s_{ik}=(\xi_k-\xi_i,\xi_k-\xi_i)=(\xi_j,\xi_j)=s_{0j},
$
and, similarly, $s_{ij}=s_{0k}$.
Since $s_{ik}=s_{0j}$ and $s_{ij}=s_{0k}$, the equality $S_{0ij}=S_{ijk}$ can be rewritten as
\begin{equation}\label{eq_product_zero}
(s_{jk}-s_{0i})(s_{jk}+s_{0i}-2s_{0j}-2s_{0k})=0.
\end{equation}
Hence either $s_{jk}=s_{0i}$ or $s_{jk}+s_{0i}-2s_{0j}-2s_{0k}=0$.
\end{proof}


\begin{lem}\label{lem_tetrahedron_est}
Let $s_{ij}=s_{ji}$, $0\le i,j\le 3$, $i\ne j$, be complex numbers, $M=\max|s_{ij}|$, and let  $S_{ijk}$ be given by~\eqref{eq_reg_map}. Then there exist universal positive constants~$E$ and~$C$ such that the following estimate holds. If\/ $|S_{ijk}-3/16|<\varepsilon<E$ for all pairwise distinct $i,j,k$, then there is a cyclic permutation $i,j,k$ of\/~$1,2,3$ such that $|s_{ij}-s_{0k}|<CM\varepsilon$, $|s_{ik}-s_{0j}|<CM\varepsilon$, and either $|s_{jk}-s_{0i}|<CM\sqrt{\varepsilon}$ or $|s_{jk}+s_{0i}-2s_{0j}-2s_{0k}|<CM\sqrt{\varepsilon}$.
 \end{lem}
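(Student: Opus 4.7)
The plan is to mimic the proof of Lemma~\ref{lem_tetrahedron}, tracking errors at every step. Assuming first that the $s_{ij}$'s are realised by vectors $\xi_0 = 0, \xi_1, \xi_2, \xi_3 \in \mathbb{C}^3$ (the degenerate case follows by continuity of all the polynomial expressions involved), I form $\eta_l$ and $\zeta_i = \eta_0 + \eta_i$ as in the original proof. Writing $G_{lm} = (\eta_l, \eta_m)$, the diagonal entries equal $4$ times the squared area of the $2$-face opposite the vertex $p_l$ and hence lie within $4\varepsilon$ of $3/4$, while the linear relation $\sum_l \eta_l = 0$ imposes the constraints $\sum_l G_{lm} = 0$ on the Gram matrix.

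The first key step is a rigidity estimate. From these constraints one computes $\sum_{i=1}^3 (\zeta_i, \zeta_i) = \sum_{l=0}^3 G_{ll}$, which lies within $16\varepsilon$ of $3$. Hence for $\varepsilon$ below some absolute constant $E$, there is an index $i$ with $|(\zeta_i, \zeta_i)| \ge 1/2$; I fix such an $i$ and let $(i,j,k)$ be the cyclic permutation of $(1,2,3)$. The rotation $\rho$ from~\eqref{eq_rho} is then well-defined, and using the identity $G_{jk} = ((\zeta_i, \zeta_i) - G_{jj} - G_{kk})/2$ (which follows from $\eta_j + \eta_k = -\zeta_i$), a direct calculation gives $\rho(\eta_k) - \eta_j = a\zeta_i$ and $\rho(\eta_i) - \eta_0 = b\zeta_i$, with $a = (G_{jj} - G_{kk})/(\zeta_i, \zeta_i) = O(\varepsilon)$ and $b = (G_{ii} - G_{00})/(\zeta_i, \zeta_i) = O(\varepsilon)$.

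The second step translates these approximations into bounds on the edge squares. Expanding $\rho(\xi_j) = [\rho(\eta_k), \rho(\eta_i)]/V$ via $[\eta_j, \eta_0] = V(\xi_k - \xi_i)$, $[\eta_j, \eta_i] = -V\xi_k$, $[\eta_i, \eta_0] = V(\xi_j - \xi_k)$ and $[\zeta_i, \zeta_i] = 0$, I obtain the clean formula $\rho(\xi_j) - (\xi_k - \xi_i) = -b\xi_i + a\xi_j - a\xi_k$. Since $\rho$ preserves the bilinear inner product, $s_{0j} = (\rho(\xi_j), \rho(\xi_j))$, and hence $s_{0j} - s_{ik}$ equals $(\rho(\xi_j) + (\xi_k - \xi_i), \rho(\xi_j) - (\xi_k - \xi_i))$, a sum of bilinear products of an $O(M)$-vector with the $O(\varepsilon)$-combination above, which yields $|s_{ik} - s_{0j}| = O(M\varepsilon)$. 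The analogous computation for $\rho(\xi_k)$ gives $|s_{ij} - s_{0k}| = O(M\varepsilon)$.

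Finally, I substitute these approximate relations into Heron's formulas. The algebraic factorisation already exhibited in the proof of Lemma~\ref{lem_tetrahedron} shows that $16(S_{0ij} - S_{ijk}) = (s_{jk} - s_{0i})(s_{jk} + s_{0i} - 2s_{0j} - 2s_{0k}) + R$, where $R$ collects the corrections arising from $s_{ij} \ne s_{0k}$ and $s_{ik} \ne s_{0j}$; each summand of $R$ carries a factor of size $O(M\varepsilon)$ and further factors of size $O(M)$, so $|R| = O(M^2\varepsilon)$. Combined with $|16(S_{0ij} - S_{ijk})| \le 32\varepsilon$, the factored product has size $O(M^2\varepsilon)$, and since both factors are themselves $O(M)$, one of them must be $O(M\sqrt{\varepsilon})$. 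The main obstacle is the translation of rotational errors into a linear $\varepsilon$-bound on the $s_{pq}$'s: the bilinear self-product $(\rho(\eta_k) - \eta_j, \rho(\eta_k) - \eta_j) = (G_{jj} - G_{kk})^2/(\zeta_i, \zeta_i) = O(\varepsilon^2)$, but in a complex bilinear space this alone does not imply smallness of the underlying vector, because of isotropy. It is the clean identity for $\rho(\xi_j) - (\xi_k - \xi_i)$ as a bounded $O(\varepsilon)$-combination of the $\xi$'s that unlocks the linear $\varepsilon$ rate instead of merely $\sqrt{\varepsilon}$.
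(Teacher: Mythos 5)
Your proof is correct and follows essentially the same route as the paper's: a quantitative perturbation of the argument of Lemma~\ref{lem_tetrahedron}, reducing to realisable $6$-tuples, bounding one $|(\zeta_i,\zeta_i)|$ from below, and propagating $O(\varepsilon)$-errors through the rotation~$\rho$ into the edge squares; you in fact supply more detail than the paper, notably the identity $\sum_{i=1}^{3}(\zeta_i,\zeta_i)=\sum_{l=0}^{3}(\eta_l,\eta_l)$ (where the paper uses a soft non-degeneracy/perturbation argument for the matrix~$G$) and the explicit formulas $\rho(\eta_k)-\eta_j=a\zeta_i$, $\rho(\eta_i)-\eta_0=b\zeta_i$ (where the paper merely asserts the existence of coefficients $\lambda_i,\mu_i=O(\varepsilon)$). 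The one small point to add is that absorbing the contribution $32\varepsilon$ of $|16(S_{0ij}-S_{ijk})|$ into $O(M^2\varepsilon)$ uses the fact, noted explicitly in the paper, that $M$ is bounded below by a universal constant, since each $S_{pqr}$ is both $O(M^2)$ and within $\varepsilon$ of $3/16$.
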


\begin{proof}
It is a standard fact from theory of matrices that any non-degenerate symmetric complex matrix is the square of a symmetric matrix. Hence any non-degenerate symmetric complex matrix of order~$n$ can be realised as the Gram matrix of a basis in~$\C^n$  with respect to the standard bilinear inner product. This easily yields that a $6$-tuple $(s_{01},\ldots,s_{23})$ of complex numbers can be realised as the squares of the edge lengths of a tetrahedron $\Delta^3\subset\C^3$ if and only if the corresponding Cayley--Menger determinant~\eqref{eq_CM} is non-zero. In particular,  realisable $6$-tuples form a Zariski open subset of~$\C^6$. Therefore, it is sufficient to prove the lemma for realisable $6$-tuples $(s_{01},\ldots,s_{23})$.

The proof mimics the above prove of Lemma~\ref{lem_tetrahedron}. The vectors~$\xi_i$, $\eta_i$, and~$\zeta_i$ are defined by the same formulae. Instead of the equalities~$(\eta_i,\eta_i)=3/8$, we now have  estimates $|(\eta_i,\eta_i)-3/8|<2\varepsilon$.
If $\varepsilon$ and all three numbers~$|(\zeta_i,\zeta_i)|$ were sufficiently small, the Gram matrix of~$\eta_0$, $\eta_1$, $\eta_2$, and~$\eta_3$ would be close to the matrix~$G$ given by~\eqref{eq_G}, hence, non-degenerate, which is impossible. Therefore, there is a universal constant~$c_1>0$ such that for~$\varepsilon$ small enough, one of~$|(\zeta_i,\zeta_i)|$ is greater than~$c_1$. We define the rotation~$\rho$ by the same formula~\eqref{eq_rho}. Instead of~\eqref{eq_rho_xi}, we obtain that 
\begin{equation*}
\rho(\xi_j)=\xi_k-\xi_i+\lambda_1\xi_1+\lambda_2\xi_2+\lambda_3\xi_3,\qquad
\rho(\xi_k)=\xi_j-\xi_i+\mu_1\xi_1+\mu_2\xi_2+\mu_3\xi_3,
\end{equation*}
where $|\lambda_i|,|\mu_i|<c_2\varepsilon$ for a universal 
 constant~$c_2>0$. This yields $|s_{ik}-s_{0j}|<c_3M\varepsilon$, $|s_{ij}-s_{0k}|<c_3M\varepsilon$ for a universal constant~$c_3>0$. Instead of~\eqref{eq_product_zero}, we obtain
$$
|(s_{jk}-s_{0i})(s_{jk}+s_{0i}-2s_{0j}-2s_{0k})|<(14c_3M^2+32)\varepsilon,
$$
which does not exceed~$c_4M^2\varepsilon$ for a universal constant~$c_4>0$, since $M$ cannot be arbitrarily close to zero. Therefore, either $|s_{jk}-s_{0i}|$ or $|s_{jk}+s_{0i}-2s_{0j}-2s_{0k}|$ does not exceed~$M\sqrt{c_4\varepsilon}$.
\end{proof}

\begin{proof}[Proof of Lemma~\ref{lem_pre-images}]
It is sufficient to prove the lemma for~$n=4$. Then the lemma for all $n>4$ will follow immediately.

Let $x$ be a pre-image of~$y_0$, let $s_{ij}$ be the coordinates of~$x$.
Lemma~\ref{lem_tetrahedron} implies that for any $4$-element subset $J\subset I$, there is a permutation $i,j,k,l$ of it such that $s_{ik}=s_{jl}$, $s_{jk}=s_{il}$, and either $s_{ij}=s_{kl}$ or $s_{ij}+s_{kl}-2s_{ik}-2s_{il}=0$. We shall apply this to different subsets~$J$. For $J=\{0,1,2,3\}$, we may assume that $s_{01}=s_{23}$ and $s_{02}=s_{13}$. We denote the numbers~$s_{01}$, $s_{02}$, and~$s_{03}$ by~$a$, $b$, and~$c$ respectively. For $J=\{0,2,3,4\}$, we obtain that at least one of the equalities $s_{04}=a$ and $s_{34}=b$ holds. We assume that $s_{04}=a$, since the other case is similar. Consider two cases:

First, suppose that $b\ne a$. For $J=\{0,1,3,4\}$, we obtain that $s_{34}=a$, $s_{14}=c$, and $a-b+2c=0$. Then for $J=\{0,2,3,4\}$, we obtain that  $s_{24}=c$. Further, for $J=\{0,1,2,4\}$, we obtain that $s_{12}=a$ and either $c=a$ or $c=b$. If $c=a$, then $b=3a$. Hence, all $s_{ij}$ are equal to~$a$, except for $s_{02}=s_{13}=3a$. Since $S_{012}=3/16$, we get $a=\pm 1$. Therefore, $x=x_{\omega,\pm 1}$, where $\omega=\{\{0,2\},\{1,3\}\}$. If $c=b$, then $b=-a$. We obtain that $s_{01}$, $s_{12}$, $s_{23}$, $s_{34}$, and~$s_{04}$ are equal to~$a$, and all other $s_{ij}$ are equal to~$-a$. Since $S_{012}=3/16$, we get $a=\pm\sqrt{-3/5}$. Therefore, $x=x_{\gamma}$, where $\gamma$ is either $\{\{0,1\},\{1,2\},\{2,3\},\{3,4\},\{4,0\}\}$ or $\{\{0,2\},\{2,4\},\{4,1\},\{1,3\},\{3,0\}\}$.

Second, suppose that $b=a$. Assume that $s_{34}\ne a$. Then for $J=\{0,2,3,4\}$, we get $s_{24}=c$ and $s_{34}=a+2c$. For $J=\{0,1,3,4\}$, we obtain that $s_{14}=c$. Now, for $J=\{1,2,3,4\}$, we obtain that $c=a$, hence, $s_{34}=3a$, and $s_{12}$ is either~$a$ or~$3a$. Since $S_{012}=3/16$, we get $a=\pm 1$. Therefore, $x$ coincides with one of the points~$x_{\omega,\sigma}$. Now assume that $s_{34}=a$. Then for $J=\{1,2,3,4\}$, we obtain that at least two of the three numbers~$s_{12}$, $s_{14}$, and~$s_{24}$ are equal to~$a$, and the third is either~$a$ or~$3a$. Also for $J=\{0,1,3,4\}$, we obtain that $s_{03}$ is either~$a$ or~$3a$. Again, we have $a=\pm 1$. Therefore, $x$ coincides with one of the points~$x_{\omega,\sigma}$.

Now, let $y\in Y$ be a point with coordinates $S_{ijk}$ such that $|S_{ijk}-3/16|<\varepsilon$, let $x\in X$ be a pre-image of~$y$ under~$\varphi$, let $s_{ij}$ be the coordinates of~$x$, and let $M=\max |s_{ij}|$. Using Lemma~\ref{lem_tetrahedron_est} instead of Lemma~\ref{lem_tetrahedron} in the above speculations, we can prove that the point $x$ is $O(M\sqrt{\varepsilon})$-close either to one of the points~$x_{\omega,\sigma}$ or to one of the points~$x_{\gamma}$. Hence, for sufficiently small~$\varepsilon$, all~$s_{ij}$ are bounded by a universal constant independent of~$\varepsilon$. Therefore, the estimate $const\cdot M\sqrt{\varepsilon}$ can be replaced with the required estimate~$const\cdot \sqrt{\varepsilon}$.
\end{proof}

\begin{lem}\label{lem_diff}
The differential $D(x)=D\varphi|_x\colon\C^{n+1\choose 2}\to\C^{n+1\choose 3}$ is injective at every point~$x=x_{\omega,\sigma}$ if $n\ge 4$, and at every point~$x=x_{\gamma}$ if $n=4$. 
\end{lem}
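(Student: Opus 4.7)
The plan is to write out the Jacobian of~$\varphi$ explicitly and reduce injectivity at each pre-image to a finite linear-algebra problem. Differentiating Heron's formula~\eqref{eq_reg_map} gives
$$
(D\varphi(t))_{ijk}=\frac{1}{8}\bigl[(s_{ik}+s_{jk}-s_{ij})t_{ij}+(s_{ij}+s_{jk}-s_{ik})t_{ik}+(s_{ij}+s_{ik}-s_{jk})t_{jk}\bigr],
$$
so after specialization one obtains an explicit linear system in the tangent coordinates~$t_{ij}$. The two families of pre-images behave somewhat differently, and I would treat them in turn.

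For a point $x_{\omega,\sigma}$, specialization yields two kinds of triangle equations. If $\{i,j,k\}$ contains no edge of~$\omega$, we obtain $t_{ij}+t_{ik}+t_{jk}=0$; if it contains exactly one edge, say $\{i,j\}\in\omega$, we obtain $-t_{ij}+3t_{ik}+3t_{jk}=0$ (two $\omega$-edges in one triangle is ruled out because $\omega$ is a partial pairing). The key trick is the rescaling $u_{ij}=t_{ij}$ for $\{i,j\}\notin\omega$ and $u_{ij}=-t_{ij}/3$ for $\{i,j\}\in\omega$: a direct check shows that \textit{both\/} families transform into the single unsigned-incidence relation $u_{ij}+u_{ik}+u_{jk}=0$. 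The claim thus reduces to showing that, for $n\ge 4$, this uniform system admits only the zero solution. For any $4$-subset $\{i,j,k,l\}\subseteq I$, suitable linear combinations of the four associated triangle equations give $u_{ij}=u_{kl}$, $u_{ik}=u_{jl}$, $u_{il}=u_{jk}$, so $u$ is constant on every pair of disjoint edges. For $n\ge 4$ the Kneser graph of disjoint $2$-subsets of $\{0,\ldots,n\}$ is connected, hence all $u_{ij}$ share a common value $u$, and any single triangle equation forces $3u=0$, so $u=0$.

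For a point $x_\gamma$ we have $n=4$, and all $5$-cycles $\gamma$ are $S_5$-conjugate, so it suffices to treat one. The edges outside $\gamma$ also form a $5$-cycle (the Petersen-type decomposition of~$K_5$); I would label the vertices $v_0,\ldots,v_4$ cyclically around $\gamma$ and set $a_i=\{v_i,v_{i+1}\}$, $b_i=\{v_i,v_{i+2}\}$, $\alpha_i=t_{a_i}$, $\delta_i=t_{b_i}$, with indices in~$\Z/5$. Specializing $D\varphi$ at $x_\gamma$ produces two $\Z/5$-invariant families: the ``consecutive'' triangles $\{v_i,v_{i+1},v_{i+2}\}$ (two $\gamma$-edges) give $\alpha_i+\alpha_{i+1}=3\delta_i$, and the ``skip'' triangles $\{v_i,v_{i+1},v_{i+3}\}$ (one $\gamma$-edge) give $3\alpha_i=\delta_{i+1}+\delta_{i+3}$. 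Decomposing into Fourier modes $\alpha_i=A\zeta^i$, $\delta_i=B\zeta^i$ with $\zeta^5=1$, each mode yields the $2\times 2$ system $A(1+\zeta)=3B$, $3A=B(\zeta+\zeta^3)$, whose determinant equals $9-(1+\zeta)(\zeta+\zeta^3)$. Using $1+\zeta+\zeta^2+\zeta^3+\zeta^4=0$ for $\zeta\ne 1$ gives $(1+\zeta)(\zeta+\zeta^3)=-1$, so the determinant equals $10$; for $\zeta=1$ it equals $5$. Both being non-zero, the kernel of~$D\varphi$ is trivial. The hard part, had one attacked $x_\gamma$ by a brute-force $10\times 10$ calculation, is absorbed into the cyclic-symmetry Fourier reduction; for $x_{\omega,\sigma}$, spotting the rescaling to unsigned incidence is the only real step.
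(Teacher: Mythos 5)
Your proof is correct, and it replaces the paper's argument rather than reproducing it: the paper simply asserts that ``a direct computation shows'' the $10\times 10$ Jacobian is non-degenerate at every $x_{\omega,\sigma}$ and every $x_\gamma$ for $n=4$, and then handles $n>4$ by restricting $D(x_{\omega,\sigma})$ to the rows and columns indexed by subsets of a $5$-element $J\subset I$ and observing that $\partial S_{ijk}/\partial s_{lm}=0$ unless $\{l,m\}\subset\{i,j,k\}$. You instead make the computation human-checkable. For $x_{\omega,\sigma}$ your rescaling $u_{ij}=t_{ij}$ off $\omega$, $u_{ij}=-t_{ij}/3$ on $\omega$ does collapse both triangle equations to $u_{ij}+u_{ik}+u_{jk}=0$ (I verified the coefficients $-\sigma,3\sigma,3\sigma$ for a triangle with one $\omega$-edge, and two $\omega$-edges in a triangle are indeed impossible for a partial pairing); the $4$-subset manipulation giving $u_{ik}=u_{jl}$ for disjoint edges, connectivity of the Kneser graph $K(n+1,2)$ for $n\ge 4$, and $3u=0$ then finish it, and this works uniformly in $n$ without the paper's restriction-to-$J$ step. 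For $x_\gamma$ your two $\Z/5$-equivariant families $\alpha_i+\alpha_{i+1}=3\delta_i$ and $3\alpha_i=\delta_{i+1}+\delta_{i+3}$ are what the specialization actually gives (the common factor $\pm\sqrt{-3/5}\ne 0$ cancels), and the Fourier reduction is legitimate because the kernel is shift-invariant and each $\zeta$-eigenspace is $2$-dimensional on both sides; the mode determinants $10$ (for $\zeta\ne 1$, using $(1+\zeta)(\zeta+\zeta^3)=\zeta+\zeta^2+\zeta^3+\zeta^4=-1$) and $5$ (for $\zeta=1$) are right. What your approach buys is a verifiable, conceptual proof and a uniform treatment of all $n\ge 4$ at once; what the paper's buys is brevity and the reusable observation about the block-sparsity of $D\varphi$, which it also needs in Lemma~\ref{lem_diff2}.
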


\begin{proof}
For $n=4$, a direct computation shows that the $10\times 10$ matrix $D(x)=\bigl(\frac{\partial S_{ijk}}{\partial s_{lm}}\bigr)$ is non-degenerate at every~$x_{\omega,\sigma}$ and at every~$x_{\gamma}$. For $n>4$, this implies that the $10\times10$ minor of every~$D(x_{\omega,\sigma})$ formed by rows and columns corresponding to $3$-element and $2$-element subsets  contained in every $5$-element subset~$J\subset I$ is non-zero. Besides, we have  $\frac{\partial S_{ijk}}{\partial s_{lm}}=0$ whenever $\{l,m\}\not\subset\{i,j,k\}$. It follows easily that every $D(x_{\omega,\sigma})$ has full rank.
\end{proof}

\begin{lem}\label{lem_diff2}
Suppose that $n>4$. Then the images of the differentials~$D(x_{\omega_1,\sigma_1})$ and~$D(x_{\omega_2,\sigma_2})$ coincide if and only if $\omega_1=\omega_2$.
\end{lem}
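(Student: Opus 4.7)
First, we note that the image only depends on $\omega$: since each $S_{ijk}$ is homogeneous of degree~$2$ in the $s_{ab}$'s, each partial derivative $\partial S_{ijk}/\partial s_{lm}$ is homogeneous of degree~$1$, and the coordinates of~$x_{\omega,\sigma}$ equal $\sigma$ or $3\sigma$; hence $D(x_{\omega,\sigma})=\sigma\,D(x_{\omega,1})$ and $\mathop{\mathrm{Im}}D(x_{\omega,\sigma})=\mathop{\mathrm{Im}}D(x_{\omega,1})$. We may thus assume $\sigma_1=\sigma_2=1$.

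For each pair $\{l,m\}$, let $\chi_{lm}\in\C^{\binom{n+1}{3}}$ be the indicator of triples containing $\{l,m\}$, and $V_{lm}\subset\C^{\binom{n+1}{3}}$ the $(n-1)$-dimensional subspace supported on such triples. A direct computation via Heron's formula~\eqref{eq_reg_map} yields
\[
D(x_{\omega,1})e_{lm}=\begin{cases}
-\tfrac{1}{8}\chi_{lm} & \text{if }\{l,m\}\in\omega,\\[1mm]
\tfrac{1}{8}\chi_{lm}+\tfrac{1}{4}e_{\{l,m,l'\}}+\tfrac{1}{4}e_{\{l,m,m'\}} & \text{if }\{l,m\}\notin\omega,
\end{cases}
\]
where $l'$ (resp.\ $m'$) is the partner of $l$ (resp.\ $m$) in $\omega$, and the corresponding correction term is omitted whenever no such partner exists.

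The main technical step---and the principal obstacle---is the following rigidity claim: for $n>4$ and every pair $\{l,m\}\subset I$, the intersection $\mathop{\mathrm{Im}}D(x_{\omega,1})\cap V_{lm}$ is one-dimensional, spanned by $D(x_{\omega,1})e_{lm}$. Equivalently, the only $\xi\in\C^{\binom{n+1}{2}}$ satisfying $D(x_{\omega,1})\xi\in V_{lm}$ lies in $\langle e_{lm}\rangle$. This reduces to solving
\[
\sum_{\{a,b\}\subset T}\xi_{ab}(\partial_{ab}S_T)|_{x_{\omega,1}}=0\quad\text{for every triple }T\not\supset\{l,m\}.
\]
Setting $x_a=\xi_{la}$, $y_a=\xi_{ma}$, and $\eta_{ab}=\xi_{ab}$ for $a,b\in I\setminus\{l,m\}$, the triples split into ``generic'' ones (no $\omega$-pair inside $T$, producing cocycle relations such as $\eta_{ab}+\eta_{ac}+\eta_{bc}=0$ and $x_a+x_b+\eta_{ab}=0$) and ``special'' ones (containing a single $\omega$-pair, producing a modified relation with coefficients $-1,3,3$). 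The hypothesis $n>4$ enters essentially via $|I\setminus\{l,m\}|\ge 3$, which yields enough cocycle relations to force $x_a=y_a=\eta_{ab}=0$, by the same argument used above for the $\omega=\emptyset$ case. The remaining $-1,3,3$ equations arising from pairs of $\omega$ inside $I\setminus\{l,m\}$ must then be checked to be consistent with this trivial solution; carrying out this case analysis is the bulk of the work.

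Given the rigidity claim, $\omega$ is reconstructed from $\mathop{\mathrm{Im}}D(x_{\omega,1})$ as follows. Call a pair $\{l,m\}$ \emph{flat} if $\mathop{\mathrm{Im}}D(x_{\omega,1})\cap V_{lm}=\langle\chi_{lm}\rangle$; by the column formula, this holds precisely when $\{l,m\}\in\omega$ or both $l,m$ are $\omega$-unpaired. The set of flat pairs, viewed as a graph on~$I$, is the disjoint union of the matching~$\omega$ with the complete graph on the set $U$ of $\omega$-unpaired vertices; when $|U|\ge 3$, $U$ is the unique maximum clique and $\omega$ is read off as the remaining edges. When $|U|\le 2$, the non-flat pairs supply the missing data: for such $\{l,m\}$, the generator of $\mathop{\mathrm{Im}}D(x_{\omega,1})\cap V_{lm}$ attains two values in the ratio $3{:}1$, and the values of $r$ for which the larger one is attained are exactly the $\omega$-partners of $l$ and $m$ (when these exist); intersecting these partner sets as $m$ varies unambiguously singles out the partner of each~$l$. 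Hence $\mathop{\mathrm{Im}}D(x_{\omega_1,1})=\mathop{\mathrm{Im}}D(x_{\omega_2,1})$ forces $\omega_1=\omega_2$.
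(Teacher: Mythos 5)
Your approach is genuinely different from the paper's. The paper disposes of this lemma in two sentences: verify the $n=5$ case by a direct (finite, presumably computer-assisted) computation of the images of the $152$ matrices $D(x_{\omega,\sigma})$, and then observe that the general case reduces to $n=5$ by projecting onto the coordinates indexed by triples inside a fixed $6$-element subset $J\subset I$ (which works because $\partial S_{ijk}/\partial s_{lm}=0$ for $\{l,m\}\not\subset\{i,j,k\}$, so the projected image is exactly the image of the $n=5$ differential at the restricted pairing). You instead attempt a uniform, combinatorial argument valid for all $n>4$ at once, built on an explicit column formula and a reconstruction of $\omega$ from the image via intersections with the coordinate subspaces $V_{lm}$.

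The column formula you derive from Heron's formula is correct, as is the reduction to $\sigma=1$, and the final graph-theoretic reconstruction of $\omega$ from the "flat pair" data plus the $3{:}1$ pattern of the non-flat generators is sound modulo the rigidity claim. However, the rigidity claim itself---that $\mathop{\mathrm{Im}}D(x_{\omega,1})\cap V_{lm}$ is one-dimensional for every $\omega$ and every $\{l,m\}$---is the entire technical substance of the lemma, and you do not prove it. You say explicitly that "carrying out this case analysis is the bulk of the work," which is precisely right: the cocycle-type system you write down, with its mixture of $(1,1,1)$ and $(-1,3,3)$ relations depending on where the $\omega$-pairs fall relative to $T$ and to $\{l,m\}$, has to be shown to have only the trivial solution for every partial pairing $\omega$, and that verification (or a clean inductive argument for it) is missing. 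Until it is supplied, the proof is incomplete; at present it is a plausible plan rather than a proof. (Two small corrections worth noting: for $n>4$ one has $|I\setminus\{l,m\}|\ge 4$, not merely $\ge 3$, and it is the bound $\ge 4$ that is actually needed in the cocycle analysis; and while your reconstruction in the $|U|\le 2$ regime is correct, it implicitly also needs at least two distinct choices of $m$ with distinct partners $m'$, which should be stated.)

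Worth weighing the trade-off: your route, if completed, would give a conceptual, computer-free proof that explains \emph{why} the lemma is true, whereas the paper's route is short but opaque and ultimately rests on an unpublished finite check. But as submitted, the paper's proof is complete and yours is not.
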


\begin{proof}
For $n=5$ the lemma can be checked by a direct computation. The general case follows from the case $n=5$.
\end{proof}

\begin{proof}[Proof of Theorem~\ref{theorem_degree}]
Suppose that $n=4$. Since the elements~$S_{ijk}$ are algebraically independent, we have $Y=\C^{10}$, hence, $y_0$ is a smooth point of~$Y$. Lemmas~\ref{lem_pre-images} and~\ref{lem_diff} imply that $y_0$ is a regular value of~$\varphi$, and has~$64$ pre-images. Hence the degree of~$\varphi$ is equal to~$64$. Therefore, $|K_1/K_2|=64$.

Now, suppose that $n>4$. Then $y_0$ is not a regular value of~$\varphi$, since it is not a smooth point of~$Y$. Lemmas~\ref{lem_pre-images} and~\ref{lem_diff} imply that near~$y_0$ the variety~$Y$ consists of smooth local components~$\varphi(U_{\omega,\sigma})$, where $U_{\omega,\sigma}$ is a small neighborhood of~$x_{\omega,\sigma}$ in the analytic topology. By Lemma~\ref{lem_diff2}, $\varphi(U_{\omega_1,\sigma_1})$ and~$\varphi(U_{\omega_2,\sigma_2})$ do not coincide near~$y_0$ unless $\omega_1=\omega_2$. On the other hand, for each~$\omega$, the local components~$\varphi(U_{\omega,\pm 1})$ coincide in a neighborhood of~$y_0$, since $S_{ijk}$ do not change under the simultaneous sign reversions of all~$s_{ij}$. Hence, near~$y_0$, the variety~$Y$ consists of $r_n$ smooth local components~$Y_{\omega}=\varphi(U_{\omega,\pm 1})$. A generic point~$y$ of any~$Y_{\omega}$ is a regular value of~$\varphi$. It has two pre-images belonging to~$U_{\omega,1}$ and~$U_{\omega,-1}$ respectively. Hence, the degree of~$\varphi$ is equal to~$2$. Therefore, $|K_1/K_2|=2$.
\end{proof}

\begin{proof}[Proofs of Theorems~\ref{theorem_degree_V} and~\ref{theorem_degree_edge} for $n\ge 5$]
$K_1$ is the field of rational functions in ${n+1\choose 2}$ independent variables~$s_{ij}$.
Let $K_1^{ev}\subset K_1$ be the subfield consisting of all rational functions for which both the numerator and the denominator contain only monomials of even total degrees. Then $K_1^{ev}$ is a proper subfield of~$K_1$. Obviously, $W\in K_1^{ev}$ if $n$ is even, and $W\notin K_1^{ev}$ but $W^2\in K_1^{ev}$ if $n$ is odd. Besides, $s_{ij}\notin K_1^{ev}$, but $s_{ij}s_{kl}\in K_1^{ev}$ for all $i,j,k,l$. Since all $S_{ijk}$ belong to~$K_1^{ev}$, we have $K_2\subset K_1^{ev}$. But by Theorem~\ref{theorem_degree}, $|K_1/K_2|=2$. Hence, $K_2=K^{ev}_1$, which completes the proofs of Theorems~\ref{theorem_degree_V} and~\ref{theorem_degree_edge} for $n\ge 5$.
\end{proof}

\begin{proof}[Proof of Theorem~\ref{theorem_degree_V} for $n=4$]
Consider the affine space~$\C^{11}$ with $10$ coordinates $S_{ijk}$ and one additional coordinate~$W$, and the regular morphism $\psi\colon X\to\C^{11}$ given by formulae~\eqref{eq_reg_map}, and by the Cayley--Menger formula for~$W$. Let $Z$ be the Zariski closure of~$\psi(X)$. The degree of~$K_1$ over the field~$K_2(W)$ obtained by adjoining~$W$ to~$K_2$ is equal to the degree of the mapping $\psi\colon X\to Z$.

It is easy to check that $W(x_{\emptyset,\pm 1})=2^{-10}\cdot 3^{-2}\cdot 5$, $W(x_{\omega,\pm 1})=-2^{-10}\cdot 3^{-1}$ if $|\omega|=1$, $W(x_{\omega,\pm 1})=-2^{-10}\cdot 3$ if $|\omega|=2$, and $W(x_{\gamma})=2^{-10}\cdot 5$ for all~$\gamma$. Then the point $z_0\in Z$ with all coordinates~$S_{ijk}$ equal to~$3/16$, and the coordinate~$W$  equal to~$2^{-10}\cdot 3^{-2}\cdot 5$ has $2$ pre-images~$x_{\emptyset,\pm 1}$ under~$\psi$.  Lemmas~\ref{lem_pre-images} and~\ref{lem_diff} imply that~$z_0$ is a regular value of~$\psi$. (Since all~$S_{ijk}$ and~$W$ do not change under the simultaneous sign reversions of all~$s_{ij}$, the images of neighborhoods~$U_{\emptyset,\pm 1}$ of~$x_{\emptyset,\pm 1}$ coincide near~$z_0$.)
Hence, $\psi$ has degree~$2$. Therefore, $|K_1/K_2(W)|=2$. But $|K_1/K_2|=64$. Thus, $|K_2(W)/K_2|=32$.
\end{proof}

\begin{proof}[Proof of Theorem~\ref{theorem_degree_edge} for $n=4$] 
Consider 
the affine space~$\C^{11}$ with $10$ coordinates $S_{ijk}$ and one  coordinate~$t$, and the regular morphism $\chi\colon X\to\C^{11}$ given by~\eqref{eq_reg_map}, and by~$t=s_{01}$. The degree of~$\chi$ considered as the mapping from~$X$ to the Zariski closure of~$\chi(X)$ is equal to the degree of~$K_1$ over~$K_2(s_{01})$. The point~$q_0\in\chi(X)$ with all~$S_{ijk}=3/16$, and $t=3$ has $4$ pre-images~$x_{\omega,1}$ such that $\{0,1\}\in\omega$. It can be easily checked that the differentials of~$\chi$ at these $4$ points are injective, and their images are pairwise distinct. Hence, a generic point $q\in\chi(X)$ close to~$q_0$ has one pre-image under~$\chi$, i.\,e., the degree of~$\chi$ is equal to~$1$. Therefore, $K_2(s_{01})=K_1$. Thus, $|K_2(s_{01})/K_2|=64$. Similarly, $|K_2(s_{ij})/K_2|=64$ for all $i\ne j$.
\end{proof}

\begin{proof}[Proof of Theorem~\ref{theorem_number}]
Since $K_2(s_{01})=K_1$, we obtain that $K_2(s_{01}^2)=K_1^{ev}$. Hence,  the minimal polynomial satisfied by~$s_{01}^2$ over~$K_2$ has degree~$32$. For generic values of  the  $2$-face areas~$A_{ijk}$,  this polynomial can have not more than $32$ roots. Since the edge lengths of any simplex $\Delta^n\subset\R^n$ are positive, this implies that the edge length~$\ell_{01}$ can take not more than $32$ values.
Besides, since $K_1^{ev}=K_2(s_{01}^2)$ we see that all~$s_{lm}^2$  are rational functions in the variables~$S_{ijk}$ and the variable~$s_{01}^2=\ell_{01}^4$. This means that for generic~$A_{ijk}$, the values of all~$\ell_{lm}$ can be uniquely restored from the values of all~$A_{ijk}$, and the value of~$\ell_{01}$. Thus, for generic~$A_{ijk}$, there are not more than $32$ possibilities for the set of the edge lengths~$\ell_{lm}$.
\end{proof}

\section{Negative results}\label{section_negative}

In this section we shall prove that $V$ is not integral over~$R_2$ for odd~$n\ge 5$, and that $V$ is not integral over~$\Lambda_2$ for $n=4$ and $n=5$. 

First, let $n=2q-1$ be odd, $n\ge 5$. Assume that $V$ were integral over~$R_2$. Then the element~$W=V^2$ would be integral over the $\Q$-algebra generated by the elements~$S_{ijk}=A_{ijk}^2$. Hence we would have a relation
\begin{equation}\label{eq_Wrel_int}
W^{M}+b_1(\bS_2)W^{M-1}+\cdots+b_M(\bS_2)=0,
\end{equation}
where $b_i(\bS_2)$ would be  polynomials in~$S_{ijk}$. The element~$W$ and all elements~$S_{ijk}$ are polynomials in independent variables~$s_{ij}$. We shall denote the set of these variables by~$\bs$, and shall write~$W(\bs)$ and~$S_{ijk}(\bs)$ to stress this dependance. The existence of a relation of the form~\eqref{eq_Wrel_int} would imply the following estimate:

\textit{For any\/  $C>0$ there is a\/ $D=D(C)>0$ such that for any set\/~$\bs$ of complex numbers, we have\/ $|W(\bs)|<D$ whenever\/ $|S_{ijk}(\bs)|<C$ for all\/~$i,j,k$.\/}

To obtain a contradiction we shall find a curve $\bs(t)$ such that all $S_{ijk}(\bs(t))$ are bounded, but $|W(\bs(t))|\to\infty$ as $t\to\infty$. We put:
\begin{align*}
&s_{ij}(t)=0&&\text{if either $0\le i,j<q$ or $q\le i,j<2q$,}\\
&s_{ij}(t)=t&&\text{if $0\le i<q\le j< 2q$ and $(i,j)\ne (r,q+r),$ $r=1,2,\ldots, q-1$,}\\
&s_{r,q+r}(t)=t+1&&\text{for $r=1,2,\ldots, q-1$.}
\end{align*}
It is easy to check that $S_{ijk}(\bs(t))=-1/16$ whenever the set $\{i,j,k\}$ contains one of the pairs $\{r,q+r\}$, $r=1,2,\ldots,q-1$, and $S_{ijk}(\bs(t))=0$ otherwise. On the other hand, the Cayley--Menger formula yields $W=(-4)^{1-q}(n!)^{-2}t$. This gives the required contradiction. Hence $V$ is not integral over~$R_2$.

Second, let $n$ be either $4$ or~$5$. If $V$ were integral over~$\Lambda_2$, then we would have a relation
\begin{equation*}
W^{M}+b_1(\bS_2)W^{M-1}+\cdots+b_M(\bS_2)=0,
\end{equation*}
where $b_i(\bS_2)$ would be Laurent polynomials in~$S_{ijk}$.  The existence of such relation would immediately imply the following estimate:

\textit{For any\/ $0<C_1<C_2$  there is a\/ $D=D(C_1,C_2)>0$ such that for any set\/~$\bs$ of complex numbers, we have\/ $|W(\bs)|<D$ whenever\/ $C_1<|S_{ijk}(\bs)|<C_2$ for all\/~$i,j,k$.}

To obtain a contradiction we shall find a curve $\bs(t)$ such that all $S_{ijk}^{\pm 1}(\bs(t))$ are bounded but $|W(\bs(t))|\to\infty$ as $t\to\infty$. Suppose, $n=5$. Let $a,b,c$ be non-zero complex numbers such that $a+b\ne 0$. We put:
\begin{gather*}
\ell_{01}(t)=\ell_{45}(t)=t^{-1},\qquad
\ell_{02}(t)=\ell_{12}(t)=\ell_{34}(t)=\ell_{35}(t)=bt,\\
\ell_{03}(t)=\ell_{13}(t)=\ell_{24}(t)=\ell_{25}(t)=at,\\
\ell_{04}(t)=\ell_{14}(t)=\ell_{05}(t)=\ell_{15}(t)=\ell_{23}(t)=(a+b)t-ct^{-3},
\end{gather*}
and put $s_{ij}(t)=\ell_{ij}^2(t)$ for all~$i,j$. As $t\to\infty$, we have
\begin{gather*}
S_{012}(\bs(t))=S_{345}(\bs(t))=\frac{1}{4}\,b^2 +O(t^{-4}),\\
S_{013}(\bs(t))=S_{245}(\bs(t))=\frac{1}{4}\,a^2 +O(t^{-4}),\\
S_{014}(\bs(t))=S_{015}(\bs(t))=S_{045}(\bs(t))=S_{145}(\bs(t))=\frac{1}{4}\,(a+b)^2 +O(t^{-4}),
\end{gather*}
\begin{gather*}
S_{023}(\bs(t))=S_{123}(\bs(t))=S_{234}(\bs(t))=S_{235}(\bs(t))=\frac{1}{2}\,abc(a+b)+O(t^{-4}),\\
W(\bs(t))=-\frac{1}{3600}\,a^2b^2(a+b)^2t^2+O(t^{-2}).
\end{gather*}
Hence, for large~$t$, all~$S_{ijk}^{\pm 1}(\bs(t))$ are bounded, and $|W(\bs(t))|\to\infty$ as $t\to\infty$, which yields a contradiction.

Now suppose that $n=4$. We take the same $s_{ij}(t)$, $0\le i,j\le 4$, as in the previous example, just forgetting about all~$s_{i5}$. Then all~$S_{ijk}^{\pm 1}(\bs(t))$ are bounded for large~$t$, but $W(\bs(t))=-\frac{1}{144}\,a^2b^2(a+b)^2t^4+O(1)$ as $t\to\infty$, which yields a contradiction.
Thus, for $n=4$ or~$5$, $V$ is not integral over~$\Lambda_2$.

\section{Valuations}\label{section_valuations}

An \textit{ordered commutative group\/} is a commutative group~$\Gamma$ with a total ordering~$<$ on it such that $\alpha<\beta$ implies $\alpha\gamma<\beta\gamma$ for all $\alpha,\beta,\gamma\in\Gamma$. (We use the multiplicative notation for~$\Gamma$ and denote the unity of~$\Gamma$ by~$1$.)

Let $F$ be a field, and let $F^{\times}$ be the multiplicative group of~$F$. A \textit{valuation\/} of~$F$ is a homomorphism $v$ of~$F^{\times}$ to an ordered commutative group $\Gamma$ such that 
$v(x+y)\le \max(v(x),v(y))$
for all $x,y\in F^{\times}$ such that $x+y\ne 0$. It is convenient to add to~$\Gamma$ an extra element~$0$ and to assume that $0<\gamma$ and $0\cdot\gamma=0$ for all $\gamma\in\Gamma$, and $0\cdot 0=0$. Then $v$ can be extended to the mapping $v\colon F\to\Gamma\cup\{0\}$ such that $v(0)=0$, $v(xy)=v(x)v(y)$, and  $v(x+y)\le \max(v(x),v(y))$ for all $x,y\in F$. Valuations of a field are in one-to-one correspondence with \textit{places\/} of this field, see~\cite[section~I.2]{Lan72}. The following lemma is a reformulation of Proposition~4,  p.~12 of~\cite{Lan72}.

\begin{lem}\label{lem_val}
Suppose that $F$ is a field, $R$ is a subring of~$F$, and $z$ is an element of~$F$. Then $z$ is integral over~$R$ if and only if $v(z)\le 1$ for all valuations~$v$ of~$F$ such that $v(x)\le 1$ for all~$x\in R$.
\end{lem}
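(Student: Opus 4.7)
The plan is to establish both implications separately. For the forward direction, I would take a monic integrality relation $z^N+a_1z^{N-1}+\cdots+a_N=0$ with $a_i\in R$ and argue that no valuation $v$ of~$F$ with $v(R)\le 1$ can have $v(z)>1$. Indeed, under this assumption the term $z^N$ would have valuation $v(z)^N$ strictly exceeding the valuation $v(a_i)v(z)^{N-i}\le v(z)^{N-i}$ of each remaining term. A standard consequence of the ultrametric inequality, proved by induction, is that when one summand in a finite sum has strictly larger valuation than all the others, it dictates the valuation of the sum. Hence the left side of the relation would have valuation $v(z)^N>1$, contradicting the fact that the sum is zero (and thus has valuation~$0$).

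For the converse, I would argue by contrapositive: assuming $z$ is not integral over~$R$, I must construct a valuation $v$ of~$F$ with $v(R)\le 1$ and $v(z)>1$. The key algebraic observation is that $z^{-1}$ is not a unit of the subring $R'=R[z^{-1}]\subset F$. Indeed, an identity $z=a_0+a_1z^{-1}+\cdots+a_kz^{-k}$ with $a_i\in R$, upon multiplication by $z^k$, yields the monic equation $z^{k+1}-a_0z^k-\cdots-a_k=0$, contradicting non-integrality. Therefore $z^{-1}$ lies in some maximal ideal $\mathfrak{m}\lhd R'$, and the localization $R'_{\mathfrak{m}}$ is a local subring of~$F$ whose maximal ideal contains~$z^{-1}$.

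The final step is to enlarge this local ring to a valuation ring of~$F$. For this I would invoke the standard existence theorem of valuation theory: every local subring of a field~$F$ is dominated by some valuation ring of~$F$. Applied to $R'_{\mathfrak{m}}$, it produces a valuation ring $\mathcal{O}\subset F$ whose maximal ideal~$\mathfrak{P}$ contracts to $\mathfrak{m}R'_{\mathfrak{m}}$; in particular $z^{-1}\in\mathfrak{P}$ and $R\subset R'\subset\mathcal{O}$. The valuation~$v$ associated to~$\mathcal{O}$ (with value group $F^\times/\mathcal{O}^\times$) then satisfies $v(x)\le 1$ exactly on~$\mathcal{O}$ and $v(x)<1$ exactly on~$\mathfrak{P}$, so $v(R)\le 1$ and $v(z)>1$, as required.

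The only non-elementary ingredient is the existence of the dominating valuation ring, which is proved by Zorn's lemma applied to the poset of local subrings of~$F$ ordered by domination; this is precisely the content of Proposition~4 in~\cite{Lan72} that the author cites. Everything else in the argument reduces to a short manipulation of the ultrametric inequality and of the ring~$R[z^{-1}]$.
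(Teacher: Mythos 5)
Your proof is correct. The paper does not actually prove this lemma---it only remarks that it is a reformulation of Proposition~4, p.~12 of Lang's \emph{Introduction to Algebraic Geometry}---and your argument is precisely the standard proof of that cited fact: ultrametric domination of the leading term of a monic relation for one direction, and for the other the observation that $z^{-1}$ is a non-unit of $R[z^{-1}]$ together with the Chevalley extension theorem producing a valuation ring dominating the localization at a maximal ideal containing $z^{-1}$.
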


To prove Theorem~\ref{theorem_R}, we shall apply Lemma~\ref{lem_val} to the field~$\F$, its subring~$R_2$, and the element~$V\in\F$. So we need to study valuations~$v$ of~$\F$ such that $v(x)\le 1$ for all $x\in R_2$. We denote by~$\fV$ the set of all valuations~$v$ satisfying this property. Since $\Q\subset R_2$, we see that for $v\in\fV$, 
 $v(\lambda)=1$ for all~$\lambda\in\Q^{\times}$. Hence $v(\lambda x)=v(x)$ for all~$\lambda\in\Q^{\times}$ and all $x\in\F$. By the Cayley--Menger formula, $V^2$ is a polynomial in~$\ell_{ij}$ with rational coefficients. Hence for any valuation~$v$ such that $v(\ell_{ij})\le 1$ for all~$i,j$, we have~$v(V)\le 1$. So we need to study valuations~$v\in\fV$ such that $v(\ell_{ij})>1$ for at least one pair~$\{i,j\}$. Denote the set of all such valuations by~$\fV_>$.

To a valuation~$v\in\fV_>$ we assign the following two elements of~$\Gamma=\Gamma(v)$: 
\begin{gather*}
\alpha(v)=\max_{0\le i<j\le n}v(\ell_{ij}),\\
\beta(v)=\max_{0\le i<j<k\le n}\min(v(\ell_{ij}),v(\ell_{jk}),v(\ell_{ki})).
\end{gather*}
Then $\alpha(v)>1$ and $\alpha(v)\ge\beta(v)$.

{\sloppy
\begin{lem}\label{lem_triangle}
Let $v\in\fV_>$, and let $i,j,k\in\V$ be three pairwise distinct numbers. Suppose that $\gamma=v(\ell_{ij})\ge v(\ell_{jk})\ge v(\ell_{ki})=\delta$, and $\gamma>1$. Put
\begin{equation*}
\begin{aligned}
q_0&=\ell_{ij}+\ell_{jk}+\ell_{ki},&&&
q_1&=\ell_{ij}-\ell_{jk}-\ell_{ki},\\
q_2&=\ell_{ij}-\ell_{jk}+\ell_{ki},&&&
q_3&=\ell_{ij}+\ell_{jk}-\ell_{ki}.
\end{aligned}
\end{equation*}
Let $\nu_0,\nu_1,\nu_2,\nu_3$ be a permutation of the numbers $0,1,2,3$ such that $$v(q_{\nu_0})\ge v(q_{\nu_1})\ge v(q_{\nu_2})\ge v(q_{\nu_3}).$$ Then $v(\ell_{jk})=v(q_{\nu_0})=v(q_{\nu_1})=\gamma$. Besides, 
\begin{itemize}
\item if $\delta<\gamma^{-1}$, then $v(q_{\nu_3})\le v(q_{\nu_2})\le\gamma^{-1}$,
\item if $\delta\ge\gamma^{-1}$, then $v(q_{\nu_2})=\delta$ and $v(q_{\nu_3})\le\gamma^{-2}\delta^{-1}$.
\end{itemize}
\end{lem}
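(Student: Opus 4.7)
The plan is to translate the conclusion into constraints on the four valuations $c_i:=v(q_i)$ and derive everything from a single product bound combined with ultrametric inequalities. Heron's formula gives $16S_{ijk}=-q_0q_1q_2q_3$, and since $A_{ijk}\in R_2$ implies $v(S_{ijk})\le 1$, I obtain the product bound $c_0c_1c_2c_3\le 1$. The pairwise sums and differences of the $q_i$ yield
\begin{align*}
q_0+q_1 &= q_2+q_3 = 2\ell_{ij},\\
q_0+q_2 &= 2(\ell_{ij}+\ell_{ki}),\\
q_1+q_3 &= 2(\ell_{ij}-\ell_{ki}),\\
q_0-q_2 &= q_3-q_1 = 2\ell_{jk},\\
q_0-q_3 &= q_2-q_1 = 2\ell_{ki}.
\end{align*}
Since $\gamma>\delta$ whenever they differ, each of the first four sums has valuation exactly $\gamma$, and the ultrametric inequality forces $\max(c_i,c_j)\ge\gamma$ on every edge $\{i,j\}$ of the $4$-cycle $C$ on $\{0,1,2,3\}$ whose edges are $\{0,1\},\{0,2\},\{1,3\},\{2,3\}$.

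Next I bound $c_i\le\gamma$ and show $v(\ell_{jk})=\gamma$. If some $c_i$ exceeded $\gamma$, then along each edge $\{i,j\}$ of $C$ the identity $v(q_i+q_j)=\gamma<c_i$ would force $c_j=c_i$; propagating through the connected graph $C$ would give all four $c_i$ equal and $>\gamma$, contradicting the product bound. Hence $c_i\le\gamma$ for all $i$, and $T:=\{i:c_i=\gamma\}$ is a vertex cover of~$C$. If instead $v(\ell_{jk})<\gamma$, then $q_0-q_2=2\ell_{jk}$ with $c_0\ne c_2$ would give $\max(c_0,c_2)=v(\ell_{jk})<\gamma$, contradicting the edge-constraint on $\{0,2\}$; so $c_0=c_2$, which via the same constraint forces $c_0=c_2=\gamma$, and by the analogous argument with $q_3-q_1=2\ell_{jk}$, $c_1=c_3=\gamma$, so all four $c_i=\gamma$, again contradicting the product bound.

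Now I use the two identities $q_0-q_3=2\ell_{ki}$ and $q_1-q_2=-2\ell_{ki}$, both of valuation~$\delta$, to pin down $T$. When $\delta<\gamma$, the ultrametric inequality forces $T$ to be closed under the involution $(0\leftrightarrow3)(1\leftrightarrow2)$, so $T\in\{\{0,3\},\{1,2\},\{0,1,2,3\}\}$; the product bound rules out the last possibility, leaving $|T|=2$. When $\delta=\gamma$, the same identities only force $T$ to meet each of $\{0,3\}$ and $\{1,2\}$, and a direct check shows that none of the four $2$-element subsets with that property is a vertex cover of~$C$, so $|T|=3$ ($|T|=4$ being excluded by the product bound). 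In every case $c_{\nu_0}=c_{\nu_1}=\gamma$, as claimed.

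Finally, for $c_{\nu_2}$ and $c_{\nu_3}$: when $|T|=3$, necessarily $\delta=\gamma$, so $c_{\nu_2}=\gamma=\delta$, and the product bound gives $c_{\nu_3}\le\gamma^{-3}=\gamma^{-2}\delta^{-1}$. When $|T|=2$, say $T=\{0,3\}$, the relation $q_1-q_2=-2\ell_{ki}$ splits into two sub-cases: if $c_1\ne c_2$ then $\max(c_1,c_2)=\delta$ and the product bound gives $c_{\nu_3}\le\gamma^{-2}\delta^{-1}$; if $c_1=c_2$ then $\delta\le c_1=c_2$, and $\gamma^2 c_1^2\le 1$ yields $c_1\le\gamma^{-1}$. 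Matching these sub-cases against the dichotomy $\delta<\gamma^{-1}$ versus $\delta\ge\gamma^{-1}$ reproduces exactly the two alternatives stated in the lemma (in the sub-case $c_1=c_2$ with $\delta\ge\gamma^{-1}$, both inequalities collapse to $c_1=c_2=\delta=\gamma^{-1}$). The main obstacle is the careful bookkeeping around the boundary case $\delta=\gamma$, where the sum and difference identities give overlapping information and the combinatorics of $T$ changes qualitatively; throughout, one must invoke the ultrametric equality $v(q_i\pm q_j)=\max(c_i,c_j)$ only after confirming that the two valuations are distinct.
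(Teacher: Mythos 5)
Your proof is correct and rests on the same two ingredients as the paper's argument: the product bound $v(q_0)v(q_1)v(q_2)v(q_3)\le 1$ coming from Heron's formula, and ultrametric analysis of the pairwise sums and differences $q_i\pm q_j$; the vertex-cover packaging is a reorganization of the paper's case analysis rather than a different method. One assertion is not justified as stated: when $\gamma=\delta$, the sums $q_0+q_2=2(\ell_{ij}+\ell_{ki})$ and $q_1+q_3=2(\ell_{ij}-\ell_{ki})$ need not have valuation exactly $\gamma$, since cancellation is possible when $v(\ell_{ij})=v(\ell_{ki})$; so in the branch $\delta=\gamma$, where you invoke the full vertex-cover property of $T$, the edges $\{0,2\}$ and $\{1,3\}$ of $C$ are not covered by the argument you give (your propagation step establishing $c_i\le\gamma$ is unaffected, as it only needs $v(q_i+q_j)\le\gamma$, and your separate argument for $v(\ell_{jk})=\gamma$ takes place under the hypothesis $v(\ell_{jk})<\gamma$, which forces $\delta<\gamma$). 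The repair is immediate from identities you already listed: when $\delta=\gamma$ one has $v(\ell_{jk})=\gamma$ automatically (it is squeezed between $v(\ell_{ij})=\gamma$ and $v(\ell_{ki})=\gamma$), so $v(q_0-q_2)=v(q_3-q_1)=v(2\ell_{jk})=\gamma$ yields $\max(c_0,c_2)\ge\gamma$ and $\max(c_1,c_3)\ge\gamma$. With that substitution every step checks out, including the final matching of the sub-cases against the dichotomy $\delta<\gamma^{-1}$ versus $\delta\ge\gamma^{-1}$.
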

}

\begin{proof}
By Heron's formula, we have
$
A^2_{ijk}=-\frac{1}{16}\,q_0q_1q_2q_3
$.
Hence,
\begin{equation}\label{eq_4v}
v(q_0)v(q_1)v(q_2)v(q_3)=v(A_{ijk})^2\le 1,
\end{equation}
Since all $q_r$ are linear combinations of~$\ell_{ij}$, $\ell_{jk}$, and $\ell_{ki}$, we have $v(q_r)\le\gamma$.

If $v(\ell_{jk})<\gamma$, then $v(\ell_{ki})<\gamma$. Hence,  $v(q_r)=\gamma$,  $r=0,1,2,3$. Therefore, $\gamma^4=v(q_0)v(q_1)v(q_2)v(q_3)\le 1$, which is a contradiction. Thus, $v(\ell_{jk})=\gamma$.

We have, $v(q_0+q_1)=v(\ell_{ij})=\gamma$. Hence, either $v(q_0)=\gamma$ or $v(q_1)=\gamma$. Similarly, either $v(q_2)=\gamma$ or $v(q_3)=\gamma$. Therefore, $v(q_{\nu_0})=v(q_{\nu_1})=\gamma$. 

Suppose that $\delta=\gamma$. Since either $q_{\nu_2}-q_{\nu_3}$ or $q_{\nu_2}+q_{\nu_3}$ is equal to one of the elements~$\pm2\ell_{ij}$, $\pm2\ell_{jk}$, $\pm2\ell_{ki}$, we see that either $v(q_{\nu_2}-q_{\nu_3})$ or $v(q_{\nu_2}+q_{\nu_3})$ is equal to~$\gamma$. Hence, $v(q_{\nu_2})=\gamma$. Then, by~\eqref{eq_4v},  $v(q_{\nu_3})\le\gamma^{-3}$. 

Now, suppose that $\delta<\gamma$.
There exists $r\ne\nu_3$ such that either $q_r-q_{\nu_3}$ or $q_r+q_{\nu_3}$ is equal to~$\pm2\ell_{ki}$. Hence, either $v(q_r-q_{\nu_3})$ or $v(q_r+q_{\nu_3})$ is equal to~$\delta$. If $r$ were either~$\nu_0$ or~$\nu_1$, we would obtain that $v(q_{\nu_3})=\gamma$, which would contradict~\eqref{eq_4v}. Therefore, $r=\nu_2$. Consequently, 
either $v(q_{\nu_2})=\delta$ or $v(q_{\nu_2})=v(q_{\nu_3})>\delta$. In the first case, \eqref{eq_4v} implies that $v(q_{\nu_3})\le\gamma^{-2}\delta^{-1}$. In the second case, \eqref{eq_4v} implies that $v(q_{\nu_2})\le\gamma^{-1}$. In both cases, the lemma follows.
\end{proof}

\begin{lem}\label{lem_triangle2}
Let $v\in\mathfrak{V}_>$, let $\alpha=\alpha(v)$, and let $i,j,k\in\V$ be three pairwise distinct numbers.  Then  
$
v(\ell_{ij}\pm\ell_{jk}\pm\ell_{ki})\le\alpha^{-1}
$
for some choice of signs.
\end{lem}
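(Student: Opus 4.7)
My approach would split on whether the maximum edge-valuation $\alpha$ is attained inside the triangle $ijk$. Write $\gamma:=\max(v(\ell_{ij}),v(\ell_{jk}),v(\ell_{ki}))$. If $\gamma=\alpha$, the lemma follows directly from Lemma~\ref{lem_triangle} applied to $ijk$: in the case $\delta<\gamma^{-1}$ we get $v(q_{\nu_3})\le\gamma^{-1}=\alpha^{-1}$, and in the case $\delta\ge\gamma^{-1}$ we get $v(q_{\nu_3})\le\gamma^{-2}\delta^{-1}\le\gamma^{-1}=\alpha^{-1}$, so in either sub-case one of the four $q$'s has valuation $\le\alpha^{-1}$.

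Assume from now on that $\gamma<\alpha$. I would first produce a vertex $p\notin\{i,j,k\}$ such that $v(\ell_{pi})=v(\ell_{pj})=v(\ell_{pk})=\alpha$. Starting from any edge $\{p_0,q_0\}$ with $v(\ell_{p_0q_0})=\alpha$, apply Lemma~\ref{lem_triangle} to each triangle $p_0q_0r$, $r\in\{i,j,k\}$: the conclusion $v(\ell_{jk})=\gamma$ of that lemma forces the second-largest edge-valuation to equal~$\alpha$. Combined with $v(\ell_{ij}),v(\ell_{jk}),v(\ell_{ki})<\alpha$, this yields, after at most one relabeling $p_0\leftrightarrow q_0$ and possibly replacing $\{p_0,q_0\}$ by an edge from one of its endpoints to a vertex of $\{i,j,k\}$ (which again carries valuation~$\alpha$ by the same lemma), the required common vertex~$p$.

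With $p$ fixed, I would apply Lemma~\ref{lem_triangle} to each of the triangles $pij$, $pjk$, $pki$: in each one two edges have valuation~$\alpha$ and the third has valuation~$<\alpha$, so the small combination produced by the lemma must have its two $\alpha$-order terms cancelling. This yields congruences $\ell_{ij}\equiv a_{ij}\ell_{pi}+b_{ij}\ell_{pj}\pmod{\alpha^{-1}}$ with $a_{ij},b_{ij}\in\{\pm1\}$, and analogous ones for $\ell_{jk}$ and $\ell_{ki}$. Substituting these into $\ell_{ij}+\mu_1\ell_{jk}+\mu_2\ell_{ki}$ and demanding that the coefficients of $\ell_{pi}$, $\ell_{pj}$, $\ell_{pk}$ all vanish gives a linear system for $\mu_1,\mu_2\in\{\pm1\}$. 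The main obstacle is the compatibility condition for this system, which reduces to $\sigma_{ij}\sigma_{jk}\sigma_{ki}=-1$ with $\sigma_{ij}:=a_{ij}b_{ij}$; I would verify it by passing to the residue field $\mathcal{O}_v/\mathfrak{m}_v$, where each $\sigma_{ij}$ equals $-\overline{\ell_{pi}/\ell_{pj}}$ (the sign that makes the top-order cancellation occur), and the product $\sigma_{ij}\sigma_{jk}\sigma_{ki}$ telescopes around the cycle $i\to j\to k\to i$ to give precisely~$-1$.
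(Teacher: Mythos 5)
Your proof is correct and takes essentially the same route as the paper's: after disposing of the case $\gamma=\alpha$ directly via Lemma~\ref{lem_triangle}, both arguments produce a common apex $p$ (the paper's $l$) with $v(\ell_{pi})=v(\ell_{pj})=v(\ell_{pk})=\alpha$, decompose each side of the triangle $ijk$ via Lemma~\ref{lem_triangle} in the corresponding triangle through $p$, and show that the signs force the $\alpha$-order terms to cancel. Your residue-field telescoping argument for the sign condition $\sigma_{ij}\sigma_{jk}\sigma_{ki}=-1$ is a cosmetic repackaging of the paper's direct computation with the linear combination $q=-\varepsilon_1\varepsilon_3'q_1-\varepsilon_2'\varepsilon_3q_2+q_3$, not a structurally different argument.
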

\begin{proof}
If one of the three values~$v(\ell_{ij})$, $v(\ell_{jk})$, and~$v(\ell_{ki})$ is equal to~$\alpha$, then  the lemma follows  from Lemma~\ref{lem_triangle}. Assume that all three values~$v(\ell_{ij})$, $v(\ell_{jk})$, and~$v(\ell_{ki})$ are strictly less than~$\alpha$.

Assume that $v(\ell_{il})<\alpha$ for all $l\in I$, $l\ne i$. Take $l,m\in I$ such that $v(\ell_{lm})=\alpha$. Applying Lemma~\ref{lem_triangle} to the triple $(l,m,i)$, we obtain that either $v(\ell_{il})=\alpha$ or $v(\ell_{im})=\alpha$, which contradicts our assumption. Hence, there is an $l\in I$, $l\ne i$, such that $v(\ell_{il})=\alpha$. Applying Lemma~\ref{lem_triangle} to the triples $(l,i,j)$ and $(l,i,k)$, we obtain that $v(\ell_{jl})=v(\ell_{kl})=\alpha$, since both $v(\ell_{ij})$ and $v(\ell_{ki})$ are less than~$\alpha$. Now,  applying Lemma~\ref{lem_triangle} to the triples $(l,i,j)$, $(l,j,k)$, and $(l,k,i)$, we obtain that there exist  $\varepsilon_1, \varepsilon_1',\varepsilon_2, \varepsilon_2',\varepsilon_3, \varepsilon_3'\in\{-1,1\}$ such that the values of~$v$ on the elements
\begin{equation*}
q_1=\ell_{jk}+\varepsilon_1\ell_{jl}+\varepsilon_1'\ell_{kl},\quad
q_2=\ell_{ki}+\varepsilon_2\ell_{kl}+\varepsilon_2'\ell_{il},\quad
q_3=\ell_{ij}+\varepsilon_3\ell_{il}+\varepsilon_3'\ell_{jl}
\end{equation*}
do not exceed~$\alpha^{-1}$.
We put $q=-\varepsilon_1\varepsilon_3'q_1-\varepsilon_2'\varepsilon_3q_2+q_3$. Then $v(q)\le\alpha^{-1}$.

Assume that $\varepsilon_1\varepsilon_1'\varepsilon_2\varepsilon_2'\varepsilon_3\varepsilon_3'=1$.
Then 
$
q=-2\varepsilon_1\varepsilon_1'\varepsilon_3'\ell_{kl}+\ell_{ij}-\varepsilon_1\varepsilon_3'\ell_{jk}-\varepsilon_2'\varepsilon_3\ell_{ki}.
$
Since $v(\ell_{ij}-\varepsilon_1\varepsilon_3'\ell_{jk}-\varepsilon_2'\varepsilon_3\ell_{ki})<\alpha$ and $v(\ell_{kl})=\alpha$, we get $v(q)=\alpha>\alpha^{-1}$, which yields a contradiction.
Thus, $\varepsilon_1\varepsilon_1'\varepsilon_2\varepsilon_2'\varepsilon_3\varepsilon_3'=-1$.
Then
$
q=\ell_{ij}-\varepsilon_1\varepsilon_3'\ell_{jk}-\varepsilon_2'\varepsilon_3\ell_{ki}.
$
The lemma follows, since $v(q)\le \alpha^{-1}$.
\end{proof}

Let $v\in\fV_>$, and let $\alpha=\alpha(v)$. For $i,j\in I$, we shall use notation $i\sim_{\gamma}j$ to indicate that $v(\ell_{ij})\le\gamma$ and notation $i\approx_{\gamma}j$ to indicate that $v(\ell_{ij})<\gamma$.

\begin{cor}\label{cor_comp}
The relation~$\sim_{\gamma}$ \textnormal{(}respectively,~$\approx_{\gamma}$\textnormal{)} is an equivalence relation on the set~$I$ whenever $\gamma\ge\alpha^{-1}$ \textnormal{(}respectively, $\gamma>\alpha^{-1}$\textnormal{)}.
\end{cor}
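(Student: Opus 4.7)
The plan is to reduce the corollary directly to Lemma~\ref{lem_triangle2}, which already encodes the essential ``approximate triangle inequality'' coming from the fact that areas of $2$-faces have valuation at most~$1$. The only non-trivial property to check is transitivity; reflexivity is trivial (or vacuous, since $\ell_{ii}$ is undefined) and symmetry is immediate from $\ell_{ij}=\ell_{ji}$.

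For transitivity of $\sim_{\gamma}$, I would take three pairwise distinct indices $i,j,k\in I$ and assume $v(\ell_{ij})\le\gamma$ and $v(\ell_{jk})\le\gamma$. By Lemma~\ref{lem_triangle2}, there exist $\varepsilon,\varepsilon'\in\{-1,+1\}$ with
\[
v\bigl(\ell_{ij}+\varepsilon\ell_{jk}+\varepsilon'\ell_{ki}\bigr)\le\alpha^{-1}.
\]
Writing $\ell_{ki}=\varepsilon'\bigl(\ell_{ij}+\varepsilon\ell_{jk}+\varepsilon'\ell_{ki}\bigr)-\varepsilon'\ell_{ij}-\varepsilon\varepsilon'\ell_{jk}$ and applying the ultrametric inequality gives
\[
v(\ell_{ki})\le\max\bigl(\alpha^{-1},\,v(\ell_{ij}),\,v(\ell_{jk})\bigr)\le\max(\alpha^{-1},\gamma)=\gamma,
\]
where the last equality uses the hypothesis $\gamma\ge\alpha^{-1}$. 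Hence $k\sim_{\gamma}i$.

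For $\approx_{\gamma}$ the argument is identical, except that now all three arguments of the maximum are strictly less than $\gamma$: the terms $v(\ell_{ij})$ and $v(\ell_{jk})$ by hypothesis, and $\alpha^{-1}$ because $\gamma>\alpha^{-1}$. The ultrametric inequality then yields $v(\ell_{ki})<\gamma$, giving transitivity of~$\approx_{\gamma}$.

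I do not expect any genuine obstacle here: the real content lies in Lemma~\ref{lem_triangle2}, and this corollary is essentially a packaging of it. The only minor care needed is to make sure transitivity is formulated for three pairwise distinct indices (which is all that is required, since the cases $i=j$, $j=k$, or $i=k$ are trivial), so that Lemma~\ref{lem_triangle2} applies.
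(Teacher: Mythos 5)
Your proof is correct and follows exactly the paper's argument: reduce to transitivity, invoke Lemma~\ref{lem_triangle2}, and solve for $v(\ell_{ki})$ via the ultrametric inequality using $\gamma\ge\alpha^{-1}$ (respectively $\gamma>\alpha^{-1}$). You merely spell out the last step a bit more explicitly than the paper does.
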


\begin{proof}
We need only to prove that the relations~$\sim_{\gamma}$ and $\approx_{\gamma}$ are transitive.  Suppose that $i\sim_{\gamma}j$ and $j\sim_{\gamma}k$. Then $v(\ell_{ij})\le\gamma$ and $v(\ell_{jk})\le\gamma$. By Lemma~\ref{lem_triangle2},  there exist $\varepsilon,\varepsilon'\in\{-1,1\}$ such that 
$
v(\ell_{ij}+\varepsilon\ell_{jk}+\varepsilon'\ell_{ik})\le\alpha^{-1}$. Since $\gamma\ge\alpha^{-1}$, we obtain that $v(\ell_{ik})\le\gamma$, that is, $i\sim_{\gamma}k$. The case of the relation~$\approx_{\gamma}$ is similar.
\end{proof}

For a $\gamma\in\Gamma$, we shall denote by~$O(\gamma)$ (respectively, by~$o(\gamma)$) any element $x\in\F$ such that $v(x)\le\gamma$ (respectively, $v(x)<\gamma$). The equivalence classes of elements $i\in I$ with respect to~$\sim_{\gamma}$ (respectively, $\approx_{\gamma}$) will be called \textit{$O(\gamma)$-components\/} (respectively, \textit{$o(\gamma)$-components\/}). Then $i$ and $j$ belong to the same $O(\gamma)$-component (respectively, to the same $o(\gamma)$-component) if and only if $\ell_{ij}=O(\gamma)$ (respectively, $\ell_{ij}=o(\gamma)$). By Corollary~\ref{cor_comp}, the decomposition of~$I$ into $O(\gamma)$-components (respectively, $o(\gamma)$-components) is well defined whenever $\gamma\ge\alpha^{-1}$ (respectively, $\gamma>\alpha^{-1}$). Obviously, the decomposition into $o(\gamma)$-components is  finer than the decomposition into $O(\gamma)$-components, and the decomposition into $O(\gamma_1)$-components is  finer than the decomposition into $o(\gamma_2)$-components whenever $\gamma_1<\gamma_2$.

\section{A lemma from linear algebra}

\begin{lem}\label{lem_linear}
Let $I$ be a finite set.
Let $U$ be a vector space over~$\Q$, and let $u_{ij}$, $i,j\in I$, $i\ne j$, be vectors of~$U$ such that $u_{ij}=u_{ji}$. Assume that for each three pairwise distinct indices~$i, j,k\in I$, one of the four vectors~$u_{ij}\pm u_{jk}\pm u_{ki}$ vanishes. Then exactly one of the following two assertions holds:
\begin{enumerate}
\item There exist $w_i\in U$, $i\in I$, such that $u_{ij}=\pm(w_i-w_j)$ for all~$i$ and~$j$.
\item There exist a decomposition $I=I_0\sqcup I_1\sqcup I_2\sqcup I_3$, $I_r\ne\emptyset$,  and vectors $\xi,\eta,\zeta\in U\setminus\{0\}$, such that $\xi+\eta+\zeta=0$ and
$$
u_{ij}=\left\{
\begin{aligned}
&0&&\text{if}\,\ r(i)=r(j),\\
\pm\,&\eta&&\text{if}\,\ \{r(i),r(j)\}=\{0,1\}\,\ \text{or}\,\ \{2,3\},\\
\pm\,&\xi&&\text{if}\,\ \{r(i),r(j)\}=\{0,2\}\,\ \text{or}\,\ \{1,3\},\\
\pm\,&\zeta&&\text{if}\,\ \{r(i),r(j)\}=\{0,3\}\,\ \text{or}\,\ \{1,2\},
\end{aligned}
\right.
$$
where $r(i)$ denotes the number such that $i\in I_{r(i)}$.
\end{enumerate}
\end{lem}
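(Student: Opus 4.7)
The plan is to fix a basepoint $0\in I$, tentatively set $w_i=u_{0i}$, and encode the obstruction to conclusion (1) by a $\{\pm 1\}$-valued $2$-cochain $c$ on the complete graph on $I\setminus\{0\}$. I expect $c$ to be automatically a coboundary -- so (1) holds -- whenever the reference vectors $u_{0i}$ are in sufficiently general position; the only way the cocycle condition can fail is if the reference vectors are trapped in a $2$-plane, and in that case I will extract the $(\Z/2)^2$-coloring of (2).

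\emph{Step 1 (reduction).} First I would reduce to the case $u_{ij}\ne 0$ for all distinct $i,j\in I$. The relation $i\sim j\iff u_{ij}=0$ is transitive by the triangle hypothesis: if $u_{ij}=u_{jk}=0$ then one of $\pm u_{ki}=0$ forces $u_{ki}=0$. Pass to a set of representatives $I^*\subset I$; a solution on $I^*$ extends back to $I$ by setting $w_{i'}=w_i$ for $i'\sim i$ in case (1), or placing $i'$ into the same $I_{r(i)}$ in case (2).

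\emph{Step 2 (sign cocycle).} With every $u_{ij}\ne 0$, each triangle relation is unique. Fix $0\in I$, put $w_0=0$, $w_i:=u_{0i}$. When $u_{0i},u_{0j}$ are linearly independent, the unique relation on $(0,i,j)$ expresses $u_{ij}=a_{ij}u_{0i}+b_{ij}u_{0j}$ with $a_{ij},b_{ij}\in\{\pm 1\}$; the degenerate subcase $u_{0i}=\pm u_{0j}$ still gives a well-defined $c_{ij}:=a_{ij}b_{ij}\in\{\pm 1\}$ after a short check. The pair $(i,j)$ is \emph{good}, i.e.\ $u_{ij}=\pm(w_i-w_j)$, precisely when $c_{ij}=-1$. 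Since replacing $w_i$ by $\epsilon_iw_i$ sends $c_{ij}$ to $c_{ij}\epsilon_i\epsilon_j$, conclusion (1) is equivalent to the solvability of $\epsilon_i\epsilon_j=-c_{ij}$, which in turn is equivalent to the cocycle condition
\[
c_{ij}c_{jk}c_{ki}=-1\qquad\text{for every triple}\ \{i,j,k\}\subset I\setminus\{0\}.
\]

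\emph{Step 3 (cocycle in generic position).} Next I would substitute the three relations on $(0,i,j),(0,j,k),(0,i,k)$ into the triangle relation on $(i,j,k)$ and match coefficients of $u_{0i},u_{0j},u_{0k}$. When these three reference vectors are linearly independent, all three coefficients must vanish, and a direct computation shows that the compatibility of the resulting signs is exactly $c_{ij}c_{jk}c_{ki}=-1$. Consequently, the cocycle can fail only on triples where $u_{0i},u_{0j},u_{0k}$ are linearly dependent; in particular, if conclusion (1) fails then the entire set $\{u_{0i}:i\ne 0\}$ is confined to a $2$-dimensional subspace $V_0\subset U$.

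\emph{Step 4 (extracting case (2)).} This is the hard part. Assuming (1) fails, Step 3 traps all reference vectors in a common $2$-plane $V_0$. A careful analysis of the admissible triangle relations inside $V_0$ should show that every $u_{0i}$ ($i\ne 0$) lies in one of exactly three $\pm$-directions $\{\pm\eta,\pm\xi,\pm\zeta\}\subset V_0$ with $\xi+\eta+\zeta=0$, and that all three directions must in fact be realised. Together with the vertex $0$ itself this partitions $I$ into four nonempty classes $I_0,I_1,I_2,I_3$, and the triangles $(0,i,j)$ compute each $u_{ij}$ in perfect agreement with the formulas in (2). Finally, conclusions (1) and (2) exclude each other: in (2) a triangle with one vertex from each of $I_1,I_2,I_3$ yields three pairwise nonproportional edge vectors, which cannot simultaneously be written as $\pm(w_i-w_j)$ for any common assignment of $w$'s. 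Combined with the dichotomy \emph{cocycle holds / fails}, this completes the argument.
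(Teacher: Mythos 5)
Your Steps 1--3 are sound and constitute a genuinely different (and rather elegant) framing from the paper's: the paper proves the lemma by brute-force sign casework on $|I|=4$ and then shows, via a counting argument on $5$-element subsets, that for $|I|\ge 5$ every $4$-subset is of the first type and the local solutions glue. Your $\{\pm1\}$-cocycle reformulation of conclusion (i) is correct (solvability of $\epsilon_i\epsilon_j=-c_{ij}$ is indeed governed by the triangle products, since triangles generate the cycle space of a complete graph), and the coefficient-matching computation in Step~3 does give $c_{ij}c_{jk}c_{ki}=-1$ whenever $u_{0i},u_{0j},u_{0k}$ are linearly independent. Even the unstated claim that a single cocycle failure traps \emph{all} reference vectors in one $2$-plane is recoverable (any $u_{0l}$ outside the plane would make all three cone triangles over $\{i,j,k\}$ generic and hence restore $c_{ij}c_{jk}c_{ki}=-1$), though you should note this argument itself breaks when the bad triple spans only a line.

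The genuine gap is Step 4, which you flag as ``the hard part'' and then do not carry out --- and it is precisely the content of the lemma. Having confined the reference vectors to a $2$-plane $V_0$, you must still show: (a) that the $u_{0i}$ fall into \emph{exactly three} $\pm$-directions $\xi,\eta,\zeta$ with $\xi+\eta+\zeta=0$; (b) that all three directions actually occur --- this is not cosmetic, because if only one or two directions occur the configuration is again of type (i) (e.g.\ with directions $\eta$ and $\xi$ only, taking $w=0,\eta,-\xi$ on the three classes realizes every $u_{ij}$ as $\pm(w_i-w_j)$), so the dichotomy of the lemma hinges on it; and (c) that the edges $u_{ij}$ with $i,j\ne 0$ obey the specific Klein-four table, not merely lie in $\{\pm\xi,\pm\eta,\pm\zeta\}$. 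Each bad triangle inside $V_0$ only yields a relation $\delta_iu_{0i}+\delta_ju_{0j}+\delta_ku_{0k}=0$ with $\delta\in\{0,\pm1\}$, and assembling these into the global four-class structure is exactly the combinatorial work the paper does in its $|I|=4$ case analysis. Two further loose ends: your $c_{ij}$ and the Step~3 computation need separate treatment when reference vectors are proportional (which is unavoidable, since the second-type configurations live entirely in the degenerate locus), and your final exclusivity argument assumes $\xi,\eta,\zeta$ pairwise non-proportional, which conclusion (ii) does not guarantee (take $\xi=\eta$, $\zeta=-2\eta$; exclusivity still holds there but needs a different argument).
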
 
 
\begin{proof}
If $u_{ij}=0$, then $u_{ik}=\pm u_{jk}$ for all $k\ne i,j$. Hence the claim of the lemma for the set~$I$ follows from the claim of the lemma for the set~$I\setminus\{j\}$. Therefore we may assume  that $u_{ij}\ne 0$ for all~$i\ne j$. Then~(ii) may hold only if~$|I|=4$.

Obviously, (i) holds whenever $|I|\le 3$.

Suppose that $|I|=4$. For convenience, we assume that $I=\{0,1,2,3\}$.
We have 
\begin{equation}
\begin{aligned}
\label{eq_u}u_{01}+\varepsilon_{02}u_{02}+\varepsilon_{12}u_{12}&=0,&&&
u_{01}+\varepsilon_{03}u_{03}+\varepsilon_{13}u_{13}&=0,\\
u_{23}+\delta_{02}u_{02}+\delta_{03}u_{03}&=0,&&&
u_{23}+\delta_{12}u_{12}+\delta_{13}u_{13}&=0
\end{aligned}
\end{equation}
for some $\varepsilon_{ij},\delta_{ij}\in\{-1,1\}$.

Suppose, $\varepsilon_{02}\delta_{02}=-\varepsilon_{03}\delta_{03}$. Then (i) holds for $w_0=0$, $w_1=u_{01}$, $w_2=-\varepsilon_{02}u_{02}$, and $w_3=-\varepsilon_{03}u_{03}$. Similarly, (i) holds if at least one of  $\varepsilon_{12}\delta_{12}=-\varepsilon_{13}\delta_{13}$, $\varepsilon_{02}\delta_{02}=-\varepsilon_{12}\delta_{12}$, and $\varepsilon_{03}\delta_{03}=-\varepsilon_{13}\delta_{13}$ holds.

Suppose that~(i) does not hold. Then $\varepsilon_{02}\delta_{02}=\varepsilon_{03}\delta_{03}=\varepsilon_{12}\delta_{12}=\varepsilon_{13}\delta_{13}.$ We denote this number by~$\sigma$. Taking the linear combination of equations~\eqref{eq_u} with the coefficients~$1$, $1$, $-\sigma$, and~$-\sigma$ respectively, we obtain that $u_{01}=\sigma u_{23}$. Similarly, we obtain that $u_{02}=\pm u_{13}$ and $u_{03}=\pm u_{12}$. Therefore, (ii) holds for $I_r=\{r\}$, $\xi=\varepsilon_{02}u_{02}$, $\eta=u_{01}$, and $\zeta=\varepsilon_{12}u_{12}$.

Now suppose that $|I|\ge 5$. The lemma has already been proved for all $4$-element subsets~$J\subset I$. We shall say that a $4$-element subset~$J$ is of the \textit{first type\/} if~(i) holds for it, and is of the \textit{second type\/} if~(ii) holds for it.

\begin{lem}\label{lem_first_type_subsets1}
Suppose that there exist\/ $i,j\in I$, $i\ne j$, such that all\/ $4$-element subsets $J\subset I$ containing both~$i$ and~$j$ are of the first type. Then~\textnormal{(i)} holds for~$I$.
\end{lem}
\begin{proof}
Consider any subset $J=\{i,j,k,l\}$. Since $J$ is of the first type, there exist vectors~$w_{i}^{J}$, $w_{j}^{J}$, $w_{k}^{J}$, and~$w_{l}^{J}$ such that
\begin{equation}\label{eq_uw}
u_{mn}=\pm(w_m^{J}-w_n^{J}),\quad m,n\in\{i,j,k,l\},\ m\ne n.
\end{equation}
Subtracting $w_i^J$ from all vectors~$w_{i}^{J}$, $w_{j}^{J}$, $w_{k}^{J}$, and~$w_{l}^{J}$ and multiplying (if necessary) all these vectors by~$-1$, we may achieve that $w_i^J=0$ and $w_j^J=u_{ij}$, while equations~\eqref{eq_uw} still hold. 

Now assume that $w_k^{J_1}\ne w_k^{J_2}$ for two different $4$-element subsets $J_1=\{i,j,k,l_1\}$ and $J_2=\{i,j,k,l_2\}$. By~\eqref{eq_uw} for $m=i$ and~$n=k$, we obtain that both~$w_k^{J_1}$ and~$w_k^{J_2}$ are equal to~$\pm u_{ik}$. Hence $w_k^{J_1}=-w_k^{J_2}$. Similarly, by~\eqref{eq_uw} for $m=j$ and $n=k$, we obtain that
$
(w_k^{J_1}-u_{ij})=-(w_k^{J_2}-u_{ij}).
$ 
Therefore $u_{ij}=0$, which contradicts our assumption.
Thus, the elements~$w^J_k$ coincide for all $4$-element subsets~$J$ containing~$i$, $j$, and~$k$. Denoting these elements by~$w_k$, we obtain that the equations $u_{kl}=\pm(w_k-w_l)$ hold for all~$k$ and~$l$.  
\end{proof}

\begin{lem}\label{lem_first_type_subsets2}
If\/ $|I|\ge 5$, then all $4$-element subsets $J\subset I$ are of the first type.
\end{lem}
\begin{proof}
Let $K=\{i,j,k,l,m\}$ be an arbitrary $5$-element subset of~$I$. 
Suppose that $K$ contains three $4$-element subsets of the second type, say $\{i,j,k,l\}$, $\{i,j,k,m\}$, and $\{i,j,l,m\}$.  Then 
$
u_{ij}=\pm u_{kl}=\pm u_{km}=\pm u_{lm}
$
for certain choice of signs. But $u_{kl}\pm u_{km}\pm u_{lm}=0$. Hence, $u_{kl}=0$, which contradicts our assumption.

Therefore, $K$ contains at least three $4$-element subsets of the first type. Then Lemma~\ref{lem_first_type_subsets1} implies that assertion~(i) holds for~$K$. Therefore all $4$-element subsets of~$K$ are of the first type. 
\end{proof}

It follows from Lemmas~\ref{lem_first_type_subsets1} and~\ref{lem_first_type_subsets2} that (i) holds for~$I$ if $|I|\ge 5$ and  all $u_{ij}$ are non-zero. This completes the proof of Lemma~\ref{lem_linear}.
\end{proof}

\section{Two types of valuations}\label{section_types}

\begin{lem}\label{lem_types}
Let $v\in\fV_>$, and let $\alpha=\alpha(v)$, $\beta=\beta(v)$. Then $v$ satisfies exactly one of the following two conditions:
\begin{enumerate}
\item There exist numbers $\varepsilon_{ij}=-\varepsilon_{ji}\in\{-1,1\}$, $i,j\in\V$, $i\ne j$, such that 
$$
v(\varepsilon_{ij}\ell_{ij}+\varepsilon_{jk}\ell_{jk}+\varepsilon_{ki}\ell_{ki})\le\alpha^{-1}
$$ 
for all pairwise distinct $i,j,k\in\V$.

\item There exist a decomposition
$\V=\V_0\sqcup\V_1\sqcup\V_2\sqcup\V_3$, $\V_r\ne\emptyset$, and elements $a,b\in\F$ such that
\begin{itemize}
\item $v(a)=v(a+b)=\alpha$, $v(b)=\beta>\alpha^{-1}$,

\item $v(\ell_{ij})\le\alpha^{-1}$ whenever $i$ and $j$ belong to the same set~$\V_r$,

\item There exist numbers $\varepsilon_{ij}=\varepsilon_{ji}\in\{-1,1\}$ such that 
\begin{align}
&\label{eq_est_1}v(\varepsilon_{ij}\ell_{ij}-b)\le\alpha^{-2}\beta^{-1}&&\text{if}\ \ \{r(i),r(j)\}=\{0,1\}\text{ or\/ }\{2,3\},\\ 
&\label{eq_est_2}v(\varepsilon_{ij}\ell_{ij}-a)\le\alpha^{-2}\beta^{-1}&&\text{if}\ \ \{r(i),r(j)\}=\{0,2\}\text{ or\/ }\{1,3\},\\
&\label{eq_est_3}v(\varepsilon_{ij}\ell_{ij}-a-b)\le\alpha^{-2}\beta^{-1}&&\text{if}\ \ \{r(i),r(j)\}=\{0,3\}\text{ or\/ }\{1,2\},
\end{align}
where $r(i)$ denotes the number such that $i\in I_{r(i)}$.
\end{itemize}
\end{enumerate}
\end{lem}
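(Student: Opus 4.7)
The plan is to introduce a quotient in which the triangle cancellations of Lemma~\ref{lem_triangle2} become linear relations, apply Lemma~\ref{lem_linear}, and then use the sharper form of Lemma~\ref{lem_triangle} to refine the estimates in the non-cocycle case.

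Let $\alpha=\alpha(v)$ and $R_\alpha=\{x\in\F:v(x)\le\alpha^{-1}\}$. Since $v$ is trivial on $\Q^{\times}$, $R_\alpha$ is a $\Q$-vector subspace of $\F$; set $U=\F/R_\alpha$ and $u_{ij}=\ell_{ij}+R_\alpha\in U$. These $u_{ij}$ are symmetric, and Lemma~\ref{lem_triangle2} gives, for every triple $i,j,k$, some sign combination $u_{ij}\pm u_{jk}\pm u_{ki}=0$ in $U$. Apply Lemma~\ref{lem_linear} to the family $\{u_{ij}\}$. Under conclusion~(i) of that lemma, choose signs $\varepsilon_{ij}=-\varepsilon_{ji}\in\{\pm 1\}$ with $\varepsilon_{ij}u_{ij}=w_i-w_j$; summing around any triangle yields $\varepsilon_{ij}u_{ij}+\varepsilon_{jk}u_{jk}+\varepsilon_{ki}u_{ki}=0$ in $U$, that is, $v(\varepsilon_{ij}\ell_{ij}+\varepsilon_{jk}\ell_{jk}+\varepsilon_{ki}\ell_{ki})\le\alpha^{-1}$, which is exactly assertion~(i) of the present lemma.

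Under conclusion~(ii) of Lemma~\ref{lem_linear} one gets a partition $\V=\V_0\sqcup\V_1\sqcup\V_2\sqcup\V_3$ (namely the $\sim_{\alpha^{-1}}$-classes, by Corollary~\ref{cor_comp}) together with nonzero $\xi,\eta,\zeta\in U$ satisfying $\xi+\eta+\zeta=0$. All edges of a given between-class type share a common valuation strictly greater than $\alpha^{-1}$. Apply Lemma~\ref{lem_triangle} to any triangle whose three vertices lie in three distinct classes: its edges realise the three type-valuations, and the two largest must coincide and equal~$\alpha$. After permuting class labels I arrange that the types $\{0,2\}/\{1,3\}$ and $\{0,3\}/\{1,2\}$ have valuation~$\alpha$, while the third type, $\{0,1\}/\{2,3\}$, has valuation~$\beta>\alpha^{-1}$. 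Fixing $i_r\in\V_r$ and setting $a=\varepsilon_{i_0 i_2}\ell_{i_0 i_2}$, $b=\varepsilon_{i_0 i_1}\ell_{i_0 i_1}$ gives $v(a)=\alpha$ and $v(b)=\beta$. Among the four Heron sign combinations on any 3-class triangle, three have valuations in $\{\alpha,\alpha,\beta\}$, hence strictly above~$\alpha^{-1}$; only the combination dictated by the modular cancellation $a+b-(a+b)=0$ can lie in $R_\alpha$, and the case $\delta\ge\gamma^{-1}$ of Lemma~\ref{lem_triangle} with $\gamma=\alpha$, $\delta=\beta$ sharpens this bound to $\alpha^{-2}\beta^{-1}$. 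Applied to $(i_0,i_1,i_2)$ this yields $v(\varepsilon_{i_1 i_2}\ell_{i_1 i_2}-a-b)\le\alpha^{-2}\beta^{-1}$ and hence $v(a+b)=\alpha$.

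The main obstacle is to propagate this $\alpha^{-2}\beta^{-1}$-estimate to every between-class edge. Set $r_{ij}=\varepsilon_{ij}\ell_{ij}-X_{ij}$, where $X_{ij}\in\{a,b,a+b\}$ is the value dictated by the type of $\{r(i),r(j)\}$, so that by construction $r_{i_0 i_1}=r_{i_0 i_2}=0$. Each 3-class triangle now produces a cocycle-type relation $\pm r_{ij}\pm r_{ik}\pm r_{jk}\equiv0\pmod{R''}$ with $R''=\{x:v(x)\le\alpha^{-2}\beta^{-1}\}$. Evaluating this on the triangles $(i_0,i_1,i_3)$, $(i_0,i_2,i_3)$, $(i_1,i_2,i_3)$ gives $r_{i_1 i_3}\equiv r_{i_0 i_3}\equiv r_{i_2 i_3}$ together with $r_{i_1 i_3}+r_{i_2 i_3}\equiv 0$; hence $2r_{i_0 i_3}\equiv 0$, and so $r_{i_0 i_3}\equiv 0$ since $\F/R''$ is a $\Q$-vector space. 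The identical triple of triangles with $i_0$ replaced by an arbitrary vertex~$j$ yields $r_{j i_1}\equiv r_{j i_2}\equiv r_{j i_3}\equiv 0$ for every $j\in\V$, and one further 3-class triangle $(j,j',i_s)$ handles pairs $(j,j')$ in which neither endpoint is a reference vertex. This establishes~\eqref{eq_est_1}--\eqref{eq_est_3} and completes case~(ii); exclusivity of the two cases is inherited directly from Lemma~\ref{lem_linear}.
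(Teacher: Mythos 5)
Your proof is correct, and its first half coincides with the paper's: both pass to the quotient $U=\F/\{x:v(x)\le\alpha^{-1}\}$, apply Lemma~\ref{lem_linear} to the images $u_{ij}$ of the $\ell_{ij}$, read off case~(i) from the cocycle alternative, and in case~(ii) identify $a$, $b$ and their valuations via Lemma~\ref{lem_triangle}. The only genuine divergence is in how the sharpened estimates \eqref{eq_est_1}--\eqref{eq_est_3} are propagated from the reference triangle to every between-class edge. The paper does this by a second application of Lemma~\ref{lem_linear}, now in the finer quotient $\widetilde U=\F/\{x:v(x)\le\alpha^{-2}\beta^{-1}\}$, applied to each quadruple $(i_0,i_1,i_2,i_3)$ with one vertex per class: since the quadruple is of the second type downstairs in $U$, it must be of the second type upstairs in $\widetilde U$, which forces $\widetilde u_{i_2i_3}=\pm\widetilde u_{i_0i_1}$ etc. You instead chase the residuals $r_{ij}=\varepsilon_{ij}\ell_{ij}-X_{ij}$ around explicit $3$-class triangles, using that the unique small Heron combination on such a triangle must reduce to the unique vanishing sign-combination $\xi+\eta+\zeta=0$ in $U$ (this uniqueness, which you leave implicit, follows from $\xi,\eta,\zeta\ne0$ in a $\Q$-vector space), and then killing the residuals by the relation $2r\equiv 0$. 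Both routes rest on the same two pillars (the $\alpha^{-2}\beta^{-1}$ bound of Lemma~\ref{lem_triangle} and linear-algebraic rigidity of sign relations); yours is more hands-on and avoids the second invocation of Lemma~\ref{lem_linear} at the cost of some sign bookkeeping, while the paper's is shorter because the type-rigidity argument dispenses with the case analysis over triangles.
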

\begin{proof}
Let $X$ be the set of all~$x\in\F$ such that $v(x)\le\alpha^{-1}$. Then $X$ is a vector space over~$\Q$. Let $U=\F/X$, and let $u_{ij}$ be the images of~$\ell_{ij}$ under the projection $\F\to U$. By Lemma~\ref{lem_triangle2}, for any pairwise distinct $i,j,k\in I$, one of the four vectors $u_{ij}\pm u_{jk}\pm u_{ki}$ vanishes.  Then either assertion~(i) or assertion~(ii) of Lemma~\ref{lem_linear} holds for the vectors~$u_{ij}$. 

1. Suppose, assertion~(i) of Lemma~\ref{lem_linear} holds for~$u_{ij}$. Then $u_{ij}=\varepsilon_{ij}(w_i-w_j)$ for some vectors $w_i\in U$ and some numbers~$\varepsilon_{ij}\in\{-1,1\}$ such that $\varepsilon_{ji}=-\varepsilon_{ij}$. Therefore, for any pairwise distinct $i$, $j$, and~$k$, we have $\varepsilon_{ij}u_{ij}+\varepsilon_{jk}u_{jk}+\varepsilon_{ki}u_{ki}=0$, hence, $\varepsilon_{ij}\ell_{ij}+\varepsilon_{jk}\ell_{jk}+\varepsilon_{ki}\ell_{ki}=O(\alpha^{-1})$.

2. Suppose, assertion~(ii) of Lemma~\ref{lem_linear} holds for~$u_{ij}$. Choose arbitrarily elements $k_r\in I_r$, $r=0,1,2$, and put $a=\pm\ell_{k_0k_2}$, $b=\pm\ell_{k_0k_1}$, $c=\pm\ell_{k_1k_2}$, where the signs are chosen so that $a$, $b$, and~$c$ go to $\xi$, $\eta$, and~$\zeta$ respectively under the projection $\F\to U$. Since $\xi,\eta,\zeta\ne 0$, we have $v(a),v(b),v(c)>\alpha^{-1}$. Renumbering the subsets~$I_0$, $I_1$, and~$I_2$, we may achieve that $v(a)\ge v(c)\ge v(b)$. We have $v(\ell_{ij})\le\alpha^{-1}$ whenever $r(i)=r(j)$, $v(\ell_{ij})=v(b)$ whenever $\{r(i),r(j)\}$ is either $\{0,1\}$ or $\{2,3\}$, 
$v(\ell_{ij})=v(a)$ whenever $\{r(i),r(j)\}$ is either $\{0,2\}$ or $\{1,3\}$, and
$v(\ell_{ij})=v(c)$ whenever $\{r(i),r(j)\}$ is either $\{0,3\}$ or $\{1,2\}$. It follows easily that $v(a)=\alpha$ and $v(b)=\beta$. Hence, $\beta>\alpha^{-1}$. Applying Lemma~\ref{lem_triangle} to the triple~$k_0,k_1,k_2$, we obtain that $v(c)=v(a)=\alpha$ and $v(a+b+c)\le\alpha^{-2}\beta^{-1}$. Hence, $v(a+b)=\alpha$. 

Let us prove~\eqref{eq_est_1}--\eqref{eq_est_3}.   Let $\widetilde{X}$ be the $\Q$-vector subspace of~$\F$ consisting of all~$x$ such that $v(x)\le\alpha^{-2}\beta^{-1}$, let $\widetilde{U}=\F/\widetilde{X}$, and let $\widetilde{u}_{ij}$ be the image of~$\ell_{ij}$ under the projection~$\F\to\widetilde{U}$. 
Let $i,j,k$ be arbitrary elements of~$I$ belonging to three pairwise distinct subsets~$I_r$. Then Lemma~\ref{lem_triangle} yields that $\ell_{ij}\pm\ell_{jk}\pm\ell_{ki}=O(\alpha^{-2}\beta^{-1})$ for some choice of signs. Hence $\widetilde{u}_{ij}\pm\widetilde{u}_{jk}\pm\widetilde{u}_{ki}=0$. Now, consider arbitrary four elements $i_r\in I_r$, $r=0,1,2,3$. Then the $6$ vectors $\widetilde{u}_{i_ri_t}$ satisfy the condition of Lemma~\ref{lem_linear}. (Notice that it is not true that all vectors $\widetilde{u}_{ij}$, $i,j\in I$, $i\ne j$, satisfy the condition of Lemma~\ref{lem_linear}.) If the $4$-element subset $\{i_0,i_1,i_2,i_3\}$ was of the first type with respect to the vectors~$\widetilde{u}_{i_ri_t}$, it would also be of the first type with respect to the vectors~$u_{i_ri_t}$, which are the images of~$\widetilde{u}_{i_ri_t}$ under the projection~$\widetilde{U}\to U$. But the subset $\{i_0,i_1,i_2,i_3\}$ is of the second type with respect to the vectors~$u_{i_ri_t}$. Hence, it is also of the second type with respect to the vectors~$\widetilde{u}_{i_ri_t}$. Therefore, $\widetilde{u}_{i_2i_3}=\pm\widetilde{u}_{i_0i_1}$, $\widetilde{u}_{i_1i_3}=\pm\widetilde{u}_{i_0i_2}$, and $\widetilde{u}_{i_1i_2}=\pm\widetilde{u}_{i_0i_3}$.
Thus $\ell_{i_2i_3}=\pm\ell_{i_0i_1}+O(\alpha^{-2}\beta^{-1})$, $\ell_{i_1i_3}=\pm\ell_{i_0i_2}+O(\alpha^{-2}\beta^{-1})$, and $\ell_{i_1i_2}=\pm\ell_{i_0i_3}+O(\alpha^{-2}\beta^{-1})$. Taking $i_0=k_0$ and $i_1=k_1$, we obtain that $\ell_{i_2i_3}=\pm b+O(\alpha^{-2}\beta^{-1})$. Hence, for arbitrary $i_0\in I_0$, $i_1\in I_1$, we get $\ell_{i_0i_1}=\pm b+O(\alpha^{-2}\beta^{-1})$. The proofs of~\eqref{eq_est_2}, \eqref{eq_est_3} are similar.
\end{proof}

Valuations satisfying condition~(i) will be called \textit{valuations of type~I\/}, and valuations satisfying condition~(ii)  will be called \textit{valuations of type~II\/}.

\section{Valuations of type I}

\begin{lem}\label{lem_typeI}
Let $v\in\fV_>$ be a valuation of type~I, let $\alpha=\alpha(v)$, $\beta=\beta(v)$. Then
\begin{enumerate}
\item $v(V)\le\alpha$,
\item $v(V)\le 1$ if $n$ is even,
\item $v(V)\le\max\left(1,\alpha^{-\frac{n-1}{4}}\beta^{-\frac{n+3}{4}}\right)$ if $n\equiv 1\pmod 4$,
\item $v(V)\le\max\left(1,\alpha^{-\frac{n-3}{4}}\beta^{-\frac{n+1}{4}}\right)$ if $n\equiv 3\pmod 4$.
\item $v(P_{i_1i_2i_3i_4i_5}V^2)\le 1$ for any pairwise distinct indices $i_1,\ldots,i_5\in I$, where $P_{i_1i_2i_3i_4i_5}$ is the polynomial defined by~\eqref{eq_P}.
\end{enumerate}
\end{lem}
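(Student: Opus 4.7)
The plan is to exploit the one-dimensional approximation afforded by type~I. Write $\ell_{ij}=\varepsilon_{ij}(\mu_i-\mu_j)+x_{ij}$ with $\mu_i\in\F$ and $v(x_{ij})\le\alpha^{-1}$; then $s_{ij}=(\mu_i-\mu_j)^2+r_{ij}$ where $r_{ij}=2\varepsilon_{ij}(\mu_i-\mu_j)x_{ij}+x_{ij}^2$ satisfies $v(r_{ij})\le 1$. The $n\times n$ Gram matrix $G_{ij}=\frac{1}{2}(s_{0i}+s_{0j}-s_{ij})$ of the edge vectors $\xi_i=p_i-p_0$ decomposes as $G=\nu\nu^T+\tilde R$, where $\nu_i=\mu_i-\mu_0$ has $v(\nu_i)\le\alpha$ and $\tilde R_{ij}=\frac{1}{2}(r_{0i}+r_{0j}-r_{ij})$ is itself the Gram matrix of ``transverse'' vectors $y_i\in\F^{n-1}$ satisfying $|y_i-y_j|^2=r_{ij}$, coming from the orthogonal decomposition $\xi_i=\nu_i e+y_i$.

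The key observation is that the $n$ transverse vectors lie in an $(n-1)$-dimensional space, so $\tilde R=YY^T$ has rank $\le n-1$ and $\det\tilde R=0$. Combining the matrix-determinant lemma $\det G=\det\tilde R+\nu^T\operatorname{adj}(\tilde R)\nu$ with the Cauchy--Binet factorization $\operatorname{adj}(\tilde R)_{ij}=(-1)^{i+j}(\det\widehat Y_i)(\det\widehat Y_j)$ (where $\widehat Y_i$ is $Y$ with row $i$ removed) gives the clean identity
\[
V=\pm\frac{1}{n}\sum_{i=0}^n(-1)^i\mu_i\,\widetilde V_i,
\]
where $\widetilde V_i=\det\widehat Y_i/(n-1)!$ is the signed $(n-1)$-volume of the transverse sub-simplex on $\{y_0,\ldots,\widehat y_i,\ldots,y_n\}$. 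The Radon relation $\sum_i(-1)^i\widetilde V_i=0$ (reflecting the affine dependence of $n+1$ points in $\F^{n-1}$) makes the right-hand side shift-invariant in~$\mu_i$. Since $((n-1)!\widetilde V_i)^2=\det\tilde R^{(i,i)}$ is a principal $(n-1)\times(n-1)$-minor of $\tilde R$, one has $v(\widetilde V_i)\le 1$, and combining with $v(\mu_i-\mu_0)\le\alpha$ establishes~(i) at once.

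For the refined bounds (ii)--(iv), one uses the $O(\gamma)$-component decomposition of~$I$ from Section~\ref{section_valuations}. Partitioning $I$ into $O(\gamma)$-clusters $C_1,\dots,C_r$, picking representatives $i_a\in C_a$, and using shift-invariance, one rewrites the sum as
\[
\sum_a\mu_{i_a}\Bigl(\sum_{i\in C_a}(-1)^i\widetilde V_i\Bigr)+\sum_a\sum_{i\in C_a}(-1)^i(\mu_i-\mu_{i_a})\widetilde V_i;
\]
the intra-cluster increments satisfy $v(\mu_i-\mu_{i_a})\le\gamma$, while the partial Radon-sums $\sum_{i\in C_a}(-1)^i\widetilde V_i$ are analyzed by applying the analogous rank-$1$ Gram decomposition to the transverse simplex, whose squared edges $r_{ij}$ themselves factor through $(\mu_i-\mu_j)$. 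Iterating this reduction for $\lfloor n/4\rfloor$ steps, each contributing an improvement by $(\alpha\beta)^{-1}$, and a terminal factor $\beta^{-1}$, yields the exponents of $\alpha$ and $\beta$ in (iii), (iv); the residue of $n\bmod 4$ governs whether the recursion terminates on an even- or odd-dimensional subproblem. For even~$n$ the terminal step is two-dimensional, where $\widetilde V=A\in R_2$ is a~priori bounded by~$1$ in valuation, yielding~(ii).

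For~(v), expand
\[
16\,S_{ijk}=8\,\Pi_{ijk}\Sigma_{ijk}+16\,S'_{ijk}+(\text{term of valuation}\le 1),
\]
with $\Pi_{ijk}=(\mu_i-\mu_j)(\mu_j-\mu_k)(\mu_i-\mu_k)$, $\Sigma_{ijk}=\varepsilon_{ij}x_{ij}+\varepsilon_{jk}x_{jk}+\varepsilon_{ki}x_{ki}$ (of $v\le\alpha^{-1}$ by the type-I cocycle), and $S'_{ijk}$ the squared area of the transverse triangle on $y_i,y_j,y_k$ (of $v\le 1$). The a~priori $v(S_{ijk})\le 1$ combined with $v(S'_{ijk})\le 1$ forces via the ultrametric inequality the refined bound $v(\Pi_{ijk}\Sigma_{ijk})\le 1$ for every triple. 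Applying this sharpened estimate to the ten triples in $\{i_1,\dots,i_5\}$ and combining with the explicit formula for~$V^2$ (a quadratic form in the $\widetilde V_i$ with $\mu_i$-coefficients), a direct monomial analysis shows that every term of $P_{i_1\dots i_5}V^2$ has valuation $\le 1$: each factor $(\mu_p-\mu_q)$ contributed by the $\Pi_{ijk}$ appearing in $P$ is absorbed by a matching estimate $v(\Pi_{pqr}\Sigma_{pqr})\le 1$, with the combinatorial slack coming from the fact that each edge of the 5-subset participates in exactly three of the ten triples. The main obstacle, and the bulk of the technical work, lies in (iii)--(iv): executing the recursion uniformly across all $O(\gamma)$-cluster configurations of~$I$ while bookkeeping the interplay of $\alpha$, $\beta$, and the signs in the Radon cancellations at each level.
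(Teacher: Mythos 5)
Your rank-one decomposition of the Gram matrix $G=\nu\nu^{T}+\tilde R$ with $\nu_{i}=\mu_{i}-\mu_{0}$ parallels the paper's decomposition $C=X+Y$, and the valuation bound $v(\tilde R_{ij})\le 1$ is indeed the content of the paper's estimate $v(y_{ij})\le 1$. But the "key observation" on which your whole clean identity rests --- that $\det\tilde R=0$ --- is false for the matrix you actually define. Since $\det(G-\nu\nu^{T})=\det G\cdot(1-\nu^{T}G^{-1}\nu)$, the matrix $\tilde R$ drops rank if and only if $\nu^{T}G^{-1}\nu=1$. Your $\nu_{i}=\mu_{i}-\mu_{0}$ come from the cocycle decomposition and satisfy no such normalization; generically $\nu^{T}G^{-1}\nu\ne 1$ and $\tilde R$ is nonsingular. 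The geometric picture ($\xi_{i}=\nu_{i}e+y_{i}$ with $y_{i}\in e^{\perp}\cong\F^{\,n-1}$) would make $\tilde R$ have rank $\le n-1$, but only if $\nu_{i}=(\xi_{i},e)$ for a genuine unit vector $e$, and nothing in your argument produces such an $e$ whose inner products with the $\xi_{i}$ agree with the cocycle potentials $\mu_{i}-\mu_{0}$ to within $O(\alpha^{-1})$. (One can in fact construct an approximate $e$, e.g.\ $e=\xi_{j}/a_{0j}$ for a $j$ with $v(a_{0j})=\alpha$ followed by renormalization, but then $\nu^{T}G^{-1}\nu=1$ only up to $O(\alpha^{-2})$, so $\det\tilde R$ is merely small, not zero, and the Cauchy--Binet factorization of the adjugate and the Radon relation $\sum(-1)^{i}\tilde V_{i}=0$ both fail exactly.) Without the vanishing of $\det\tilde R$, the rank-one determinant lemma gives $\det G=\det\tilde R+\nu^{T}\operatorname{adj}(\tilde R)\nu$ with a nonzero first term, and the factorization into a perfect square $(\sum(-1)^{i}\nu_{i}\det\hat Y_{i})^{2}$ is no longer available. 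This is precisely why the paper does \emph{not} attempt to factor: it keeps the $\det Y$ term, proves $v(\det Y)\le 1$ and $v(\det Y_{ij})\le 1$ directly, and extracts (i) from the rank-one expansion without ever needing $Y$ (or $\tilde R$) to be singular.

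The same gap propagates into your treatment of (ii)--(iv). The paper's proof of these parts is a single-shot estimate: choose the smallest $o(\alpha)$-component $J$ of cardinality $k+1$, improve the bound $v(y_{ij})\le\alpha^{-2}$ on the $(n-k)\times(n-k)$ block outside $J$, obtain $v(\det Y),v(\det Y_{ij})\le\alpha^{-2}$ when $k<(n-1)/2$ (giving (ii)), and in the borderline case $k=(n-1)/2$ refine further with the $o(\beta)$-decomposition inside $J_{1}$ to get the block structure of Figure~2(b) and hence (iii)--(iv). Your proposal replaces this with an ``iteration for $\lfloor n/4\rfloor$ steps, each contributing an improvement by $(\alpha\beta)^{-1}$, and a terminal factor $\beta^{-1}$.'' This is not spelled out, does not correspond to anything the paper does (there is no recursion in the paper, only a two-level cluster refinement), and since it is built on the Radon/volume identity that does not hold, it cannot be salvaged as written. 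For (v) your ultrametric deduction $v(\Pi_{ijk}\Sigma_{ijk})\le 1$ from $v(S_{ijk})\le 1$ is formally correct once the expansion is justified, but the concluding ``monomial analysis'' is not given, whereas the paper's proof of (v) is a short case split on whether $\beta^{2}\ge\alpha^{-1}$ using (i), (iii), (iv) and the observation that three of the five indices share an $o(\alpha)$-component so $v(S_{i_{1}i_{2}i_{3}})\le\beta^{4}$.
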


\begin{proof}
For any~$i\ne j$, we put $a_{ij}=\varepsilon_{ij}\ell_{ij}$. Then $a_{ij}=-a_{ji}$, and for any pairwise distinct~$i,j,k$, we have $v(a_{ij}+a_{jk}+a_{ki})\le\alpha^{-1}$.

The set $I$ consists of at least two $o(\alpha)$-components. Let $J$ be the $o(\alpha)$-component of the smallest cardinality, which we conveniently define by~$k+1$. Then $k\le (n-1)/2$. Renumbering the elements of~$I$ we may achieve that $J=\{0,1,\ldots,k\}$. 

The Cayley--Menger formula~\eqref{eq_CM} easily implies that $V^2=\frac{(-1)^n}{2^n(n!)^2}\det C$, where $C=(c_{ij})_{i,j=1}^n$ is the matrix with entries 
\begin{equation*}
c_{ii}=-2a_{0i}^2,\qquad
c_{ij}=a_{ij}^2-a_{0i}^2-a_{0j}^2,\ i\ne j.
\end{equation*}
Then $v(V)^2=v(\det C)$.
We have $C=X+Y$, where $x_{ij}=-2a_{0i}a_{0j}$ for all $i$ and~$j$,
$y_{ii}=0$ for all~$i$, and  $y_{ij}=a_{ij}^2-(a_{0i}-a_{0j})^2$ for $i\ne j$. 
Since $X$ is a rank~$1$ matrix,
\begin{equation}\label{eq_detB}
\det C=\det Y+\sum_{i,j=1}^n(-1)^{i+j}x_{ij}\det Y_{ij}=\det Y-2\sum_{i,j=1}^n(-1)^{i+j}a_{0i}a_{0j}\det Y_{ij},
\end{equation}
where $Y_{ij}$ is the matrix~$Y$ with the $i$th row and the $j$th column deleted.

Now, let us estimate the values~$v(y_{ij})$. First, for any $i$ and $j$, we have 
$$
v(y_{ij})=v(a_{ij}+a_{0i}-a_{0j})v(a_{ij}-a_{0i}+a_{0j})\le \frac{1}\alpha\cdot\alpha=1.
$$
This implies that $v(\det Y)\le 1$ and $v(\det Y_{ij})\le 1$. Since $v(a_{0i})\le\alpha$,  formula~\eqref{eq_detB} yields $v(\det C)\le \alpha^2$. Hence, $v(V)\le\alpha$.

To obtain a better estimate for~$v(V)$, we need better estimates for~$v(y_{ij})$. Suppose that both $i$ and~$j$ are greater than~$k$. Then $v(a_{0i})=v(a_{0j})=\alpha$. Since $v(a_{ij}+a_{0i}-a_{0j})\le \alpha^{-1}$, we obtain that $v(a_{ij}-a_{0i}-a_{0j})=v(a_{ij}+a_{0i}+a_{0j})=\alpha$. Now, by Heron's formula, we have 
$v(A_{0ij})^2=\alpha^2 v(y_{ij})$. But $v(A_{0ij})\le 1$.
Hence, $v(y_{ij})\le \alpha^{-2}$. Thus, the matrix~$Y$ has the form shown in Figure~\ref{fig_Y}(a). We see that $Y$ has a submatrix of size $(n-k)\times (n-k)$ consisting of elements that are $O(\alpha^{-2})$, while all other elements are~$O(1)$. Hence, every~$Y_{ij}$ has a submatrix of size at least $(n-k-1)\times (n-k-1)$ consisting of elements that are $O(\alpha^{-2})$. 

\begin{figure}
\begin{center}

\unitlength=9.5mm
\begin{picture}(11.5,5.8)

\put(1.4,.5){(a)}

\put(7.6,0){(b)}

\put(0,2.3){%
\begin{picture}(0,0)
\put(0,0){\line(0,1){3}}
\put(3,0){\line(0,1){3}}
\put(1.2,0){\line(0,1){3}}

\put(0,0){\line(1,0){3}}
\put(0,3){\line(1,0){3}}
\put(0,1.8){\line(1,0){3}}

\put(.2,2.28){$O(1)$}
\put(.2,.8){$O(1)$}
\put(1.7,2.28){$O(1)$}
\put(1.6,.8){$O\bigl(\frac{1}{\alpha^2}\bigr)$}

\put(3.2,.81){$\left.\rule{0pt}{9.7mm}\right\}n-k$}
\put(3.2,2.31){$\left.\rule{0pt}{6mm}\right\}k$}

\put(0.05,-.3){$\underbrace{\rule{10.5mm}{0pt}}_{\textstyle k}$}
\put(1.25,-.3){$\underbrace{\rule{16.2mm}{0pt}}_{\textstyle n-k}$}
\end{picture}
}

\put(5.5,.8){%
\begin{picture}(4,5)
\put(0,1){%
\begin{picture}(4,4)

\put(0,0){\line(1,0){4}}
\put(0,0){\line(0,1){4}}
\put(4,4){\line(-1,0){4}}
\put(4,4){\line(0,-1){4}}

\put(0,2.1){\line(1,0){4}}
\put(1.9,0){\line(0,1){4}}

\put(1.9,3.2){\line(1,0){2.1}}
\put(.8,0){\line(0,1){2.1}}

\put(0.6,2.95){$O(1)$}
\put(2.6,3.5){$O(1)$}

\put(0.03,1){$O(1)$}
\put(.83,1){$O\bigl(\frac{1}{\alpha\beta}\bigr)$}

\put(2.4,2.55){$O\bigl(\frac{1}{\alpha\beta}\bigr)$}
\put(2.45,1){$O\bigl(\frac{1}{\alpha^2}\bigr)$}

\put(4.2,3.5){$\left.\rule{0pt}{4.2mm}\right\}\,m$}
\put(4.2,2.55){$\left.\rule{0pt}{5.9mm}\right\}\frac{n-1}{2}-m$}
\put(4.2,.95){$\left.\rule{0pt}{11mm}\right\}\frac{n+1}{2}$}

\put(0.07,-.3){$\underbrace{\rule{4.5mm}{0pt}}_{\displaystyle m}$}
\put(0.8,-.3){$\underbrace{\rule{9mm}{0pt}}_{\frac{n-1}{2}-m}$}
\put(1.97,-.3){$\underbrace{\rule{19mm}{0pt}}_{\textstyle\frac{n+1}{2}}$}

\end{picture}}

\end{picture}}

\end{picture}
\end{center}
\caption{The matrix~$Y$}\label{fig_Y}
\end{figure}
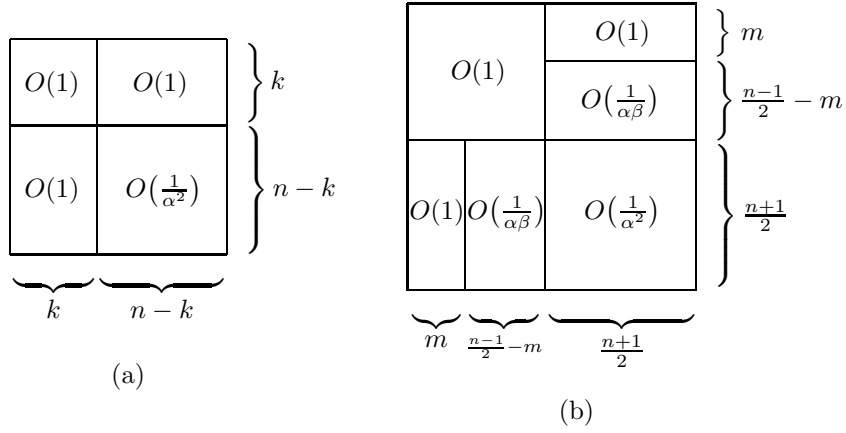

If $k<(n-1)/2$, then $n-k>n/2$ and $n-k-1>(n-1)/2$. We obtain that $\det Y=O(\alpha^{-2})$ and $\det Y_{ij}=O(\alpha^{-2})$ for all $i$ and $j$. Since $v(a_{0i})\le \alpha$, formula~\eqref{eq_detB} implies that $v(\det C)\le 1$. Therefore, $v(V)\le 1$.

The only case which we still need to handle is $k=(n-1)/2$. Obviously, this may happen only if $n$ is odd, and $I$ consists of exactly two $o(\alpha)$-components, either consisting of exactly $(n+1)/2$ elements. We may assume that these components are $J_1=\{0,1,\ldots,(n-1)/2\}$ and $J_2=\{(n+1)/2,(n+3)/2,\ldots,n\}$. Then $\beta<\alpha$. 

If $\beta\le\alpha^{-1}$, then~(i) implies~(iii) and~(iv). Assume that $\beta>\alpha^{-1}$. Then the decomposition of~$I$ into $o(\beta)$-components is well defined. By definition of $\beta$, at least one of the two $o(\alpha)$-components, say~$J_1$, consists of at least two $o(\beta)$-components. Let $K$ be the $o(\beta)$-component contained in~$J_1$ with the smallest cardinality. Then the cardinality of $K$ does not exceed~$(n+1)/4$. We may assume that $K=\{0,1,\ldots,m\}$, $m\le (n-3)/4$.

Suppose that $m<i\le k<j$. Then $v(a_{0i})=\beta$ and $v(a_{0j})=\alpha$.
Since $v(a_{ij}+a_{0i}-a_{0j})\le \alpha^{-1}$, we obtain that $v(a_{ij}-a_{0i}-a_{0j})=\beta$ and $v(a_{ij}+a_{0i}+a_{0j})=\alpha$. Since $v(A_{0ij})\le 1$, we obtain that $v(y_{ij})\le \alpha^{-1}\beta^{-1}$. Thus, the matrix~$Y$ has the form shown in Figure~\ref{fig_Y}(b).

First, we get $\det Y=O(\alpha^{-2})$. Second,  $\det Y_{ij}=O(\alpha^{-2})$ unless both $i$ and~$j$ are greater than $(n-1)/2$. In the latter case the $(n-1)\times (n-1)$ matrix~$Y_{ij}$ has the lower-right $(n-1)/2\times (n-1)/2$ submatrix consisting of elements that are $O(\alpha^{-2})$. Hence,
$$
\det Y_{ij}=(-1)^{\frac{n-1}{2}}\det Y_{ij}'\det Y_{ij}''+O(\alpha^{-2}),
$$
where $Y_{ij}'$ and $Y_{ij}''$ are the lower-left and the upper-right $(n-1)/2\times (n-1)/2$ submatrices of~$Y_{ij}$. But $Y_{ij}'$ has $\frac{n-1}{2}-m$ columns consisting of elements that are $O(\alpha^{-1}\beta^{-1})$, and $Y_{ij}''$ has $\frac{n-1}{2}-m$ rows consisting of elements that are $O(\alpha^{-1}\beta^{-1})$. Hence, $v(\det Y_{ij}')$ and $v(\det Y_{ij}'')$ do not exceed $(\alpha\beta)^{-\frac{n-1}{2}+m}$.
Therefore,
$$
\det Y_{ij}\le\max\left(\alpha^{-2},(\alpha\beta)^{-n+1+2m}\right)
$$
Using~\eqref{eq_detB}, we finally obtain
$$
v(V)^2=v(\det C)\le \max\left(1,\alpha^{-n+3+2m}\beta^{-n+1+2m}\right).
$$
Assertions~(iii) and~(iv) follow, since $m\le [(n-3)/4]$.

Now, let us prove~(v). Again, we need to consider only the case of two $o(\alpha)$-com\-po\-nents~$J_1$ and~$J_2$ of equal cardinalities~$(n+1)/2$, since in all other cases we have $v(V)\le 1$, which trivially implies~(v). So $n$ is odd and $n\ge 5$.

Suppose that $\beta^2\ge\alpha^{-1}$. Then  $\alpha^{-(n-1)/4}\beta^{-(n+3)/4}\le 1$ if $n\equiv 1\pmod 4$ and $\alpha^{-(n-3)/4}\beta^{-(n+1)/4}\le 1$ if $n\equiv 3\pmod 4$. Hence~(iii) and~(iv) imply that $v(V)\le 1$, which again implies~(v).

Suppose that $\beta^2<\alpha^{-1}$.  Three of the five numbers  $i_1,\ldots,i_5$, say $i_1$, $i_2$, and~$i_3$, belong to the same $o(\alpha)$-component. Then $v(a_{i_1i_2})$, $v(a_{i_2i_3})$, and $v(a_{i_1i_3})$ do not exceed~$\beta$. Then $
v(S_{i_1i_2i_3})\le\beta^4<\alpha^{-2}.
$
Since $v(S_{jkm})\le 1$ for all $j,k,m$, this implies that $v(P_{i_1i_2i_3i_4i_5})<\alpha^{-2}$.
But $v(V)\le\alpha$. Therefore, $v(P_{i_1i_2i_3i_4i_5}V^2)<1$.
\end{proof}

\section{Valuations of type II}\label{section_typeII}

\begin{lem}\label{lem_typeII}
Let $v\in\fV_>$ be a valuation of type~II, let $\alpha=\alpha(v)$, $\beta=\beta(v)$. Then
\begin{enumerate}
\item $v(V)\le(\alpha\beta)^{\frac{6-n}{2}}$ if $n$ is even, in particular, $v(V)\le 1$ if $n$ is even and\/ $\ge 6$,
\item $v(V)\le\alpha^{\frac{7-n}{2}}\beta^{\frac{5-n}2}$ if $n$ is odd,
\item $v(P_{i_1i_2i_3i_4i_5}V^2)\le 1$ for any pairwise distinct~$i_1,\ldots,i_5\in I$ if $n\ge 6$,
\item $v(P_{i_1i_2i_3i_4i_5}DV^2)\le 1$ for any pairwise distinct~$i_1,\ldots,i_5\in I$ if $n=5$,
\item $v(QV^2)\le 1$ if $n=4$,
\end{enumerate}
where $P_{i_1i_2i_3i_4i_5}$, $D$, $Q$ are the polynomials defined by~\eqref{eq_P},~\eqref{eq_D},~\eqref{eq_Q} respectively.
\end{lem}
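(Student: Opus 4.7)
My plan is to bound $v(V^2)$ directly via the reduced Cayley--Menger formula
\[
V^2=\frac{(-1)^n}{2^n(n!)^2}\det C,\qquad c_{ii}=-2s_{0i},\quad c_{ij}=s_{ij}-s_{0i}-s_{0j},
\]
already used in the proof of Lemma~\ref{lem_typeI}. I first relabel so that $0$ lies in a block $I_r$ of maximal cardinality $n^{*}$, and set $n_0'=n^{*}-1$ and $I_0'=I_r\setminus\{0\}$. Lemma~\ref{lem_types}(ii) then gives a decomposition $C=\widetilde C+E$, where $\widetilde C$ is block-constant with entry $T_{p,q}\in\{0,-2a^2,-2b^2,-2(a+b)^2,2ab,-2a(a+b),-2b(a+b)\}$ on $I_p\times I_q$, while the error $E$ has valuation $\le\alpha^{-2}$ inside $I_0'\times I_0'$ and $\le\alpha^{-1}\beta^{-1}$ elsewhere.

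The key algebraic observations are that $\widetilde C=\widetilde B\,T'\,\widetilde B^T$, where $\widetilde B$ is the $n\times 3$ indicator matrix of $I_1,I_2,I_3$ and $T'=(T_{p,q})_{p,q=1}^3$ has $\det T'=32a^2b^2(a+b)^2$, of valuation $\alpha^4\beta^2$, and that the three $2\times 2$ \emph{principal} minors of $T'$ all vanish identically (for instance $T_{1,1}T_{2,2}-T_{1,2}T_{2,1}=4a^2b^2-4a^2b^2=0$). Hence $\rank\widetilde C\le 3$, and expanding $\det C$ by multilinearity in columns and Laplace in rows gives
\[
\det C=\sum_{k=0}^{3}\sum_{|R|=|S|=k}\pm\,\det\widetilde C_{R,S}\cdot\det E_{R^c,S^c}.
\]
For the dominant $k=3$ term, $\det\widetilde C_{R,S}$ vanishes unless $R,S$ each contain exactly one vertex from each of $I_1,I_2,I_3$, in which case it equals $\pm\det T'$. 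The complementary error minor is controlled by the Leibniz formula, using both the refined $\alpha^{-2}$ estimate on $E|_{I_0'\times I_0'}$ and the combinatorial fact that every bijection $R^c\to S^c$ must match at least $\max\bigl(0,\,2n_0'-(n-3)\bigr)$ vertices of $I_0'$ with columns in $I_0'$. After optimising over $n_0'=n^{*}-1$ this yields bounds (i) and (ii); the terms with $k<3$ are controlled in the same spirit, and the vanishing of the $2\times 2$ principal minors of $T'$ is precisely what prevents the $k=2$ contributions from becoming dominant.

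For assertions (iii)--(v), the strategy is that the extra polynomial factor ($P_{i_1i_2i_3i_4i_5}$, $P_{i_1i_2i_3i_4i_5}D$, or $Q$) gains sufficient valuation to absorb any excess in $v(V^2)$. Heron's formula together with Lemma~\ref{lem_types}(ii) forces $v(S_{ijk})$ to be small as soon as the triangle $ijk$ is ``block-degenerate'', either because $i,j,k$ share a block or because the corresponding values from $\{0,\pm a,\pm b,\pm(a+b)\}$ are affinely dependent so that some Heron factor $\pm A_{ijk}\pm A_{ijl}\pm A_{ijm}$ has valuation $\le\alpha^{-1}$. A careful count of such degenerate triangles inside each of $P_{i_1i_2i_3i_4i_5}$, $P_{i_1i_2i_3i_4i_5}D$, and $Q$ (the last being the product over all $\binom{5}{2}=10$ edges $\{i,j\}$ of the Heron expression associated with the three $2$-faces through $\{i,j\}$), combined with the bounds just established, yields the stated estimates.

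The main obstacle will be (a)~the case in~(i) with $n$ even and all four blocks of small cardinality, where the naive $k=3$ bound overshoots the target $(\alpha\beta)^{6-n}$ by a factor $\alpha\beta^{-1}\ge 1$, and (b)~the degenerate-triangle bookkeeping in~(v) for $n=4$; in both places the vanishing of the $2\times 2$ principal minors of $T'$, together with the tighter $\alpha^{-2}$ bound inside $I_0'\times I_0'$, supplies the missing factor.
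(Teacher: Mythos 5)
Your idea of writing $C=\widetilde C+E$ with $\widetilde C=\widetilde B\,T'\,\widetilde B^T$ of rank $\le 3$ and then expanding $\det C$ by multilinearity is a genuinely different (and conceptually appealing) route from the paper's, which instead performs explicit elementary row/column operations to produce a matrix~$C'$, rescales its $k_1,k_2,k_3$ rows and columns to obtain~$C''$, and estimates $\det C''$ directly, using a parity argument to get the extra gain when $n$ is even. Your computations of $T'$, of $\det T'=32a^2b^2(a+b)^2$, and of the vanishing of the three principal $2\times 2$ minors are all correct, and the observation $v(\det T')=\alpha^4\beta^2$ does reproduce the odd bound~(ii) from the $k=3$ term alone, since $\alpha^4\beta^2\cdot(\alpha^{-1}\beta^{-1})^{n-3}=\alpha^{7-n}\beta^{5-n}$. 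However, the plan for the even case has concrete problems.

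First, the refined estimate on $E$ is wrong as stated, in two respects. With $0\in I_0$, the entries of $E$ are $O(\alpha^{-2})$ on all of $(I_0'\cup I_1)^2$, not merely on $I_0'\times I_0'$; this is easy to check from the expansion $\ell_{0i}^2=b^2+2b(\varepsilon_{0i}\ell_{0i}-b)+(\varepsilon_{0i}\ell_{0i}-b)^2=b^2+O(\alpha^{-2})$ valid for $i\in I_1$, whereas for $i\in I_2\cup I_3$ the analogous cross-term is only $O(\alpha^{-1}\beta^{-1})$. You must count matchings landing inside $(I_0'\cup I_1)^2$, and the relevant quantity is $p=|I_0'|+|I_1|$; the pigeonhole lower bound for the number of ``fine'' factors in $\det E_{R^c,S^c}$ (for the $k=3$ minors) is $\max\bigl(0,\,2(p-1)-(n-3)\bigr)=\max(0,\,2p-n+1)$, not $\max(0,\,2n_0'-(n-3))$.

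Second, the prescription ``put $0$ in a block of maximal cardinality'' is the wrong choice and does not guarantee the bound~(i). For the $k=3$ term one needs at least one fine factor, hence $2p-n+1\ge 1$, i.e.\ $p\ge n/2$, which requires $|I_0|+|I_1|\ge |I_2|+|I_3|+1$. Since the split of $\{I_0,I_1,I_2,I_3\}$ into the pairs $\{I_0,I_1\}$ and $\{I_2,I_3\}$ is canonical (determined by which block pairs carry the ``small'' label~$b$), the correct choice is to place~$0$ in the pair of larger total size, not in the single largest block. A concrete failure of your rule: $|I_0|=5$, $|I_1|=1$, $|I_2|=3$, $|I_3|=4$ (so $n=12$). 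The maximal block is~$I_0$, but $|I_0|+|I_1|=6<7=|I_2|+|I_3|$, so $p=5<6=n/2$ and the pigeonhole gives no fine factor; the $k=3$ bound is then only $\alpha^{7-n}\beta^{5-n}$, exceeding the target $(\alpha\beta)^{6-n}$ by $\alpha\beta^{-1}\ge 1$. If instead you put $0$ in (say) $I_3$, then $p=|I_3'|+|I_2|=3+3=6=n/2$ and the argument goes through. In general, for $n$ even, $n+1$ is odd, so the two pair totals differ; placing $0$ in the larger pair gives $p\ge n/2$, and the $k=3$ contribution is $\le\alpha^4\beta^2\cdot\alpha^{-2}(\alpha^{-1}\beta^{-1})^{n-4}=(\alpha\beta)^{6-n}$, as required. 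With this change your approach actually sidesteps the paper's parity argument (which the paper needs because its refined region is the union of two overlapping corner blocks), which is a modest simplification.

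Third, your stated remedy for obstacle~(a) --- ``the vanishing of the $2\times 2$ principal minors of $T'$ \dots supplies the missing factor'' --- cannot work as written: those vanishings enter only through the $k\le 2$ terms (they kill the would-be dominant principal $2\times 2$ minors of $T'$, which otherwise have valuation as large as $\alpha^4$), whereas the overshoot identified above sits squarely in the $k=3$ term. The $k\le 2$ terms do need to be bounded, and the principal-minor vanishing plus the off-diagonal $2\times 2$ minor valuations ($\le\alpha^3\beta$) together with the same pigeonhole on $(I_0'\cup I_1)^2$ suffice there, but this is a separate check, not a fix for the $k=3$ deficit.

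Finally, your sketch of (iii)--(v) is too thin to assess: the paper's proof of~(v) for $n=4$ really does require the $(\alpha\beta)^{6-n}=\alpha^2\beta^2$ bound from~(i), combined with a computation showing $v(Q)\le\alpha^{-2}\beta^{-2}$ via Heron's formula applied to the triangles through the edge inside the size-$2$ block; and the proof of~(iv) for $n=5$ splits into cases according to whether some $I_r$ has cardinality~$3$ or two of them have cardinality~$2$, using $v(D)\le\alpha^{-4}\beta^{-4}$ in the second case. ``A careful count of degenerate triangles'' would need to be fleshed out along these lines before you could claim these items.
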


\begin{proof}
Renumbering elements of~$I$ we may achieve that 
\begin{align*}
I_0&=\{0,1,\ldots,k_1-1\},&
I_1&=\{k_1,k_1+1,\ldots,k_2-1\},\\
I_2&=\{k_2,k_2+1,\ldots,k_3-1\},&
I_3&=\{k_3,k_3+1,\ldots,n\}.
\end{align*}
We shall conveniently put $k_0=0$ and $k_4=n+1$.
From~\eqref{eq_est_1}--\eqref{eq_est_3}, we obtain that  
\begin{equation}\label{eq_IIsq}
\begin{aligned}
\ell_{ij}^2&=b^2+O(\alpha^{-2})&
&\text{if $\{r(i),r(j)\}=\{0,1\},\{2,3\}$},\\
\ell_{ij}^2&=a^2+O(\alpha^{-1}\beta^{-1})&
&\text{if $\{r(i),r(j)\}=\{0,2\},\{1,3\}$},\\
\ell_{ij}^2&=(a+b)^2+O(\alpha^{-1}\beta^{-1})&
&\text{if $\{r(i),r(j)\}=\{0,3\},\{1,2\}$}.
\end{aligned}
\end{equation}

As in the proof of Lemma~\ref{lem_typeI}, we see that $v(V)^2=v(\det C)$, where $C=(c_{ij})_{i,j=1}^n$ is the matrix with entries 
\begin{equation*}
c_{ii}=-2\ell_{0i}^2,\qquad
c_{ij}=\ell_{ij}^2-\ell_{0i}^2-\ell_{0j}^2,\ i\ne j.
\end{equation*}
Apply to~$C$ the following elementary transformations that do not change the determinant of it: For $r=1,2,3$ and for $i=k_r+1,\ldots,k_{r+1}-1$, subtract the $k_r$th row from the $i$th row, and subtract the $k_r$th column from the $i$th column. The result is independent of the order in which these transformations are performed. Finally, subtract the $k_2$th row from the $k_3$th row, and subtract the $k_2$th column from the $k_3$th column. Let $C'=(c_{ij}')$ be the matrix obtained. Let us estimate  the entries of~$C'$. Since $C'$ is symmetric, we shall consider only~$c_{ij}'$ for $1\le i\le j\le n$.

1. Suppose, $i=j$. If $i\ne k_1,k_2,k_3$, we have $c_{ii}'=-2\ell_{k_{r(i)}i}^2=O(\alpha^{-2})$.
Besides, 
\begin{equation*}
c_{k_1k_1}'=-2\ell_{0k_1}^2=O(\beta^2),\quad
c_{k_2k_2}'=-2\ell_{0k_2}^2=O(\alpha^2),\quad
c_{k_3k_3}'=-2\ell_{k_2k_3}^2=O(\beta^2).
\end{equation*}

2. Suppose that $i<j$ and $i$ and $j$ belong to the same subset $I_r$. If $i\ne k_r$, then 
$$
c_{ij}'=\ell_{ij}^2-\ell_{k_ri}^2-\ell_{k_rj}^2=O(\alpha^{-2}).
$$
If $i=k_r$, then
$
c_{k_rj}'=\ell_{k_rj}^2-\ell_{mj}^2+\ell_{mk_r}^2$,
where $m=0$ if  $r=1$ or~$2$, and $m=k_2$ if $r=3$.
By~\eqref{eq_IIsq}, $c_{k_rj}'=O(\alpha^{-2})$ if $r=1$ or~$3$, and $c_{k_rj}'=O(\alpha^{-1}\beta^{-1})$ if $r=2$.

3. Suppose that $i\in I_r$, $j\in I_t$, $r<t$. If neither $i=k_r$ nor $j=k_t$, then 
$$
c_{ij}'=\ell_{ij}^2-\ell_{k_rj}^2-\ell_{ik_t}^2+\ell_{k_rk_t}^2.
$$
By~\eqref{eq_IIsq}, $c_{ij}'=O(\alpha^{-2})$ if $(r,t)=(0,1)$ or $(2,3)
$, and $c_{ij}'=O(\alpha^{-1}\beta^{-1})$ if $(r,t)$ is one of the pairs $(0,2)$, $(0,3)$, $(1,2)$, and~$(1,3)$. If $i=k_r$ and $j\ne k_t$, then
$$
c_{k_rj}'=\ell_{k_rj}^2-\ell_{k_rk_t}^2-\ell_{0j}^2+\ell_{0k_t}^2=O(\alpha^{-1}\beta^{-1}).
$$
Now, suppose that $i\ne k_r$ and $j=k_t$. If $r=0$, and $t=1$ or~$2$, then 
$$
c'_{ik_t}=\ell_{ik_t}^2-\ell_{0k_t}^2-\ell_{0i}^2,
$$ 
which is $O(\alpha^{-2})$ for $t=1$ and $O(\alpha^{-1}\beta^{-1})$ for $t=2$. If $r=0$ or~$1$, and $t=3$, then
$$
c_{ik_3}'=\ell_{ik_3}^2-\ell_{k_rk_3}^2-\ell_{ik_2}^2+\ell_{k_rk_2}^2=O(\alpha^{-1}\beta^{-1}).
$$
If $r=1$ and $t=2$, then
$$
c_{ik_2}'=\ell_{ik_2}^2-\ell_{k_1k_2}^2-\ell_{0i}^2+\ell_{0k_1}^2=O(\alpha^{-1}\beta^{-1}).
$$
If $r=2$ and $t=3$, then
$$
c_{ik_3}'=\ell_{ik_3}^2-\ell_{k_2k_3}^2-\ell_{k_2i}^2=O(\alpha^{-2}).
$$
Finally,
\begin{align*}
c_{k_1k_2}'&=\ell_{k_1k_2}^2-\ell_{0k_1}^2-\ell_{0k_2}^2=2ab+O(\alpha^{-1}\beta^{-1})=O(\alpha\beta),\\
c_{k_1k_3}'&=\ell_{k_1k_3}^2-\ell_{0k_3}^2-\ell_{k_1k_2}^2+\ell_{0k_2}^2=-4ab-2b^2+O(\alpha^{-1}\beta^{-1})=O(\alpha\beta),\\
c_{k_2k_3}'&=\ell_{k_2k_3}^2+\ell_{0k_2}^2-\ell_{0k_3}^2=-2ab+O(\alpha^{-1}\beta^{-1})=O(\alpha\beta).
\end{align*}
The obtained estimates are summarized in Figure~\ref{fig_C'}.

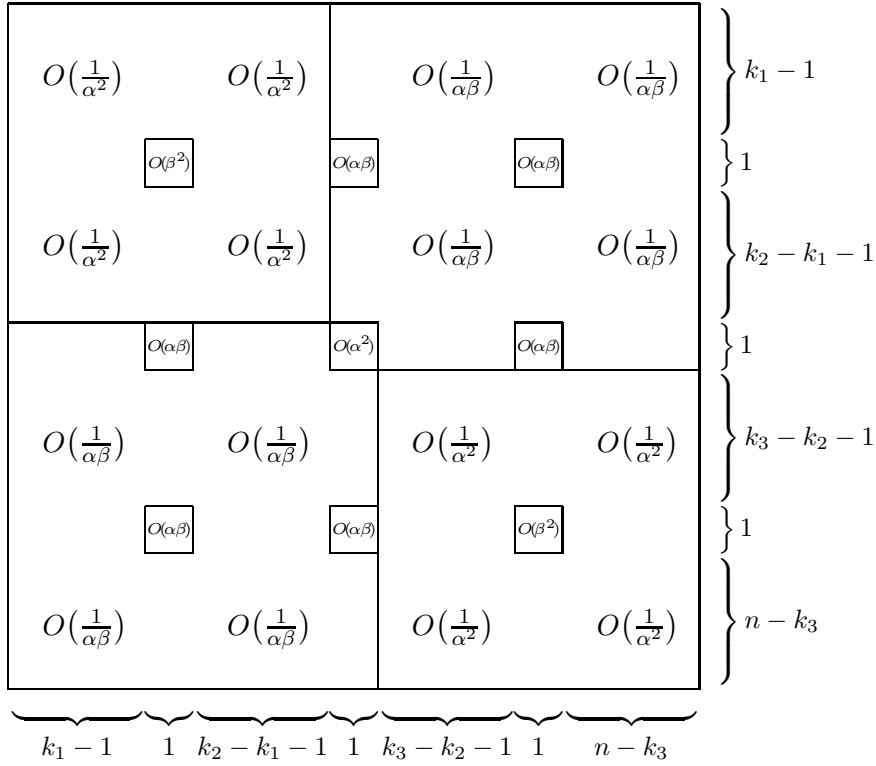
\begin{figure}
\begin{center}
\unitlength=.9cm
\begin{picture}(12.6,11.1)

\put(0,1){%
\begin{picture}(10.1,10.1)

\put(0,0){\line(1,0){10.1}}
\put(4.7,4.7){\line(1,0){5.4}}
\put(0,5.4){\line(1,0){5.4}}
\put(0,10.1){\line(1,0){10.1}}

\put(0,0){\line(0,1){10.1}}
\put(4.7,4.7){\line(0,1){5.4}}
\put(5.4,0){\line(0,1){5.4}}
\put(10.1,0){\line(0,1){10.1}}

\put(2,2){\line(1,0){.7}}
\put(2,2.7){\line(1,0){.7}}
\put(2.7,2){\line(0,1){.7}}
\put(2,2){\line(0,1){.7}}

\put(2,7.4){\line(1,0){.7}}
\put(2,8.1){\line(1,0){.7}}
\put(2.7,7.4){\line(0,1){.7}}
\put(2,7.4){\line(0,1){.7}}

\put(7.4,2){\line(1,0){.7}}
\put(7.4,2.7){\line(1,0){.7}}
\put(8.1,2){\line(0,1){.7}}
\put(7.4,2){\line(0,1){.7}}

\put(7.4,7.4){\line(1,0){.7}}
\put(7.4,8.1){\line(1,0){.7}}
\put(8.1,7.4){\line(0,1){.7}}
\put(7.4,7.4){\line(0,1){.7}}

\put(2,4.7){\line(1,0){.7}}
\put(2,4.7){\line(0,1){.7}}
\put(2.7,4.7){\line(0,1){.7}}

\put(4.7,2){\line(1,0){.7}}
\put(4.7,2){\line(0,1){.7}}
\put(4.7,2.7){\line(1,0){.7}}

\put(4.7,7.4){\line(1,0){.7}}
\put(4.7,8.1){\line(1,0){.7}}
\put(5.4,7.4){\line(0,1){.7}}

\put(7.4,4.7){\line(0,1){.7}}
\put(8.1,4.7){\line(0,1){.7}}
\put(7.4,5.4){\line(1,0){.7}}

\put(2.03,2.28){\tiny$O\!(\!\alpha\hspace{-.5pt}\beta\!)$}
\put(4.73,2.28){\tiny$O\!(\!\alpha\hspace{-.5pt}\beta\!)$}
\put(2.03,4.98){\tiny$O\!(\!\alpha\hspace{-.5pt}\beta\!)$}
\put(4.73,7.68){\tiny$O\!(\!\alpha\hspace{-.5pt}\beta\!)$}
\put(7.43,4.98){\tiny$O\!(\!\alpha\hspace{-.5pt}\beta\!)$}
\put(7.43,7.68){\tiny$O\!(\!\alpha\hspace{-.5pt}\beta\!)$}
\put(2.04,7.68){\tiny$O\!(\!\beta^2\!)$}
\put(7.44,2.28){\tiny$O\!(\!\beta^2\!)$}
\put(4.74,4.98){\tiny$O\!(\!\alpha\hspace{-.5pt}^2\!)$}

\put(.5,6.4){\large$O\bigl(\frac{1}{\alpha^2}\bigr)$}
\put(.5,8.9){\large$O\bigl(\frac{1}{\alpha^2}\bigr)$}
\put(3.2,6.4){\large$O\bigl(\frac{1}{\alpha^2}\bigr)$}
\put(3.2,8.9){\large$O\bigl(\frac{1}{\alpha^2}\bigr)$}

\put(5.9,6.4){\large$O\bigl(\frac{1}{\alpha\beta}\bigr)$}
\put(5.9,8.9){\large$O\bigl(\frac{1}{\alpha\beta}\bigr)$}
\put(8.6,6.4){\large$O\bigl(\frac{1}{\alpha\beta}\bigr)$}
\put(8.6,8.9){\large$O\bigl(\frac{1}{\alpha\beta}\bigr)$}

\put(5.9,.8){\large$O\bigl(\frac{1}{\alpha^2}\bigr)$}
\put(5.9,3.5){\large$O\bigl(\frac{1}{\alpha^2}\bigr)$}
\put(8.6,.8){\large$O\bigl(\frac{1}{\alpha^2}\bigr)$}
\put(8.6,3.5){\large$O\bigl(\frac{1}{\alpha^2}\bigr)$}

\put(.5,.8){\large$O\bigl(\frac{1}{\alpha\beta}\bigr)$}
\put(.5,3.5){\large$O\bigl(\frac{1}{\alpha\beta}\bigr)$}
\put(3.2,.8){\large$O\bigl(\frac{1}{\alpha\beta}\bigr)$}
\put(3.2,3.5){\large$O\bigl(\frac{1}{\alpha\beta}\bigr)$}

\put(10.3,.9){$\left.\rule{0pt}{10mm}\right\}n-k_3$}
\put(10.32,2.25){$\left.\rule{0pt}{3.3mm}\right\}1$}
\put(10.3,3.6){$\left.\rule{0pt}{10mm}\right\}k_3-k_2-1$}
\put(10.32,4.95){$\left.\rule{0pt}{3.3mm}\right\}1$}
\put(10.3,6.3){$\left.\rule{0pt}{10mm}\right\}k_2-k_1-1$}
\put(10.32,7.65){$\left.\rule{0pt}{3.3mm}\right\}1$}
\put(10.3,9){$\left.\rule{0pt}{10mm}\right\}k_1-1$}

\put(0.06,-.3){$\underbrace{\rule{17mm}{0pt}}_{\displaystyle\strut k_1-1}$}
\put(2,-.3){$\underbrace{\rule{5mm}{0pt}}_{\displaystyle\strut 1}$}
\put(2.76,-.3){$\underbrace{\rule{17mm}{0pt}}_{\displaystyle\strut k_2-k_1-1}$}
\put(4.7,-.3){$\underbrace{\rule{5mm}{0pt}}_{\displaystyle\strut 1}$}
\put(5.46,-.3){$\underbrace{\rule{17mm}{0pt}}_{\displaystyle\strut k_3-k_2-1}$}
\put(7.4,-.3){$\underbrace{\rule{5mm}{0pt}}_{\displaystyle\strut 1}$}
\put(8.16,-.3){$\underbrace{\rule{17mm}{0pt}}_{\displaystyle\strut n-k_3}$}

\end{picture}}
\end{picture}
\end{center}
\caption{The matrix $C'$}\label{fig_C'}

\end{figure}

Now, we divide the $k_1$th and the $k_3$th rows by~$ab$, and divide the $k_2$th row by~$a^2$. Further, we divide  the $k_1$th and the $k_3$th columns by~$ab$, and divide the $k_2$th column by~$a^2$. Let~$C''$ be the matrix obtained. Then all elements of~$C''$ are~$O(\alpha^{-1}\beta^{-1})$. Besides, the upper-left $k_2\times k_2$ submatrix of~$C''$ and the below-right $(n-k_2+1)\times(n-k_2+1)$ submatrix of~$C''$ consist of elements that are~$O(\alpha^{-2})$.
Hence, 
\begin{align*}
v(\det C'')&\le \alpha^{-2}\cdot\left(\alpha^{-1}\beta^{-1}\right)^{n-1}= 
\alpha^{-n-1}\beta^{-n+1}&&\text{if $n$ is odd,}\\ 
v(\det C'')&\le \left(\alpha^{-2}\right)^2\cdot\left(\alpha^{-1}\beta^{-1}\right)^{n-2}= 
\alpha^{-n-2}\beta^{-n+2}&&
\text{if $n$ is even.}
\end{align*}
Assertions~(i) and~(ii) follow, since $v(V)^2=v(\det C')=\alpha^8\beta^4v(\det C'')$.

Let us prove that $v(P_{i_1i_2i_3i_4i_5})\le \beta^4$. Certainly, this is true  if $\beta>1$. If $\beta\le 1$, then $\beta<\alpha$. Hence, the set~$I$ consists of two $O(\beta)$-components~$I_0\cup I_1$ and~$I_2\cup I_3$. Three of the five numbers $i_1,\ldots,i_5$, say  $i_1$, $i_2$, and~$i_3$, belong to the same $O(\beta)$-component. Then $v(S_{i_1i_2i_3})\le\beta^4$, hence, $v(P_{i_1i_2i_3i_4i_5})\le\beta^4$.

 For even~$n$, (iii) follows from~(i). 
Suppose that $n$ is odd, $n\ge 7$. If $\beta^2\ge \alpha^{-1}$ and $n\ge 9$, then 
$
v(V)\le \alpha^{\frac{7-n}{2}}\beta^{\frac{5-n}{2}}\le \alpha^{\frac{9-n}{4}}\le 1.
$
Similarly, if $n=7$ and $\beta\ge 1$, then
$
v(V)\le\beta^{-1}\le 1
$.
Hence in both cases $v(P_{i_1i_2i_3i_4i_5}V^2)\le 1$. 

If $\beta^2<\alpha^{-1}$,  then $v(P_{i_1i_2i_3i_4i_5})<\alpha^{-2}$. Hence, $
v(P_{i_1i_2i_3i_4i_5}V^2)\le(\alpha\beta)^{5-n}<1.
$

If $n=7$ and $\beta<1$, we obtain that
$
v(P_{i_1i_2i_3i_4i_5}V^2)\le\beta^4\cdot\beta^{-2}=\beta^2<1.
$

Let us prove~(iv). By~(ii), we have $v(V)\le\alpha$. Since $|I|=6$, we see that either one of the subsets~$I_r$ has cardinality~$3$, or two subsets~$I_r$ and~$I_t$ have cardinalities~$2$.

Suppose that $I_r=\{i,j,k\}$ has cardinality~$3$. Then $v(\ell_{ij})$, $v(\ell_{jk})$, and~$v(\ell_{ki})$ do not exceed~$\alpha^{-1}$. Hence, $v(S_{ijk})\le\alpha^{-4}$. Therefore, $v(P_{i_1i_2i_3i_4i_5})\le\alpha^{-4}$. Thus,  $v(P_{i_1i_2i_3i_4i_5}DV^2)\le\alpha^{-2}<1$.

Suppose that $I_r=\{i,j\}$ and $I_t=\{k,l\}$ have cardinalities~$2$. Then $\ell_{ij}=O(\alpha^{-1})$, and the difference between any two of the elements~$\varepsilon_{ik}\ell_{ik}$, $\varepsilon_{jk}\ell_{jk}$, $\varepsilon_{il}\ell_{il}$, and~$\varepsilon_{jl}\ell_{jl}$ is~$O(\alpha^{-2}\beta^{-1})$. By Heron's formula we easily get $S_{ijk}=\frac{1}{4}\ell_{ik}^2\ell_{ij}^2+O(\alpha^{-2}\beta^{-2})$ and $S_{ijl}=\frac{1}{4}\ell_{il}^2\ell_{ij}^2+O(\alpha^{-2}\beta^{-2})$. Therefore, $S_{ijk}-S_{ijl}=O(\alpha^{-2}\beta^{-2})$. Similarly, $S_{ikl}-S_{jkl}=O(\alpha^{-2}\beta^{-2})$. Consequently, $v(D)\le\alpha^{-4}\beta^{-4}$. Thus, $$v(P_{i_1i_2i_3i_4i_5}DV^2)\le \beta^4\cdot(\alpha^{-4}\beta^{-4})\cdot\alpha^2= \alpha^{-2}<1.$$

Let us prove~(v).
The set $I=\{0,1,2,3,4\}$ is the disjoint union of the non-empty subsets~$I_0$, $I_1$, $I_2$, and~$I_3$. Without loss of generality we may assume that $I_0=\{0,1\}$, $I_1=\{2\}$, $I_2=\{3\}$, and $I_3=\{4\}$.  Then $\ell_{01}=O(\alpha^{-1})$, and
\begin{equation*}
\ell_{i2}=\pm b+O(\alpha^{-2}\beta^{-1}),\quad
\ell_{i3}=\pm a+O(\alpha^{-2}\beta^{-1}),\quad
\ell_{i4}=\pm (a+b)+O(\alpha^{-2}\beta^{-1})
\end{equation*}
for $i=0,1$.
By Heron's formula, we easily obtain that
\begin{gather*}
A_{012}=\pm\frac12b\ell_{01}+O(\alpha^{-3}\beta^{-1}),\qquad
A_{013}=\pm\frac12a\ell_{01}+O(\alpha^{-2}\beta^{-2}),\\
A_{014}=\pm\frac12(a+b)\ell_{01}+O(\alpha^{-2}\beta^{-2}).
\end{gather*}
Hence, 
$
\pm A_{012}\pm A_{013}\pm A_{014}=O(\alpha^{-2}\beta^{-2})
$ for some choice of signs. Therefore,
 $v(Q)\le \alpha^{-2}\beta^{-2}$. But by~(i), $v(V)\le\alpha\beta$. Thus, $v(QV^2)\le 1$.
\end{proof}

\section{Proofs of Theorems~\ref{theorem_R}, \ref{theorem_Lambda}, \ref{theorem_leading}, and~\ref{theorem_leading2}}\label{section_final}

Suppose that $n\ge 6$ and $n$ is even. To prove that $V$ is integral over~$R_2$, we need to show that $v(V)\le 1$ for all valuations~$v\in \fV$. For $v\in\fV\setminus\fV_>$, this follows from the Cayley--Menger formula. Any valuation $v\in\fV_>$ is either of type~I or of type~II. By Lemmas~\ref{lem_typeI}(i) and~\ref{lem_typeII}(i), $v(V)\le 1$. Thus, $V$ is integral over~$R_2$.

If $n\ge 7$ and $n$ is odd, Lemmas~\ref{lem_typeI}(v) and~\ref{lem_typeII}(iii) imply that $v(P_{i_1i_2i_3i_4i_5}V^2)\le 1$ for any $v\in\fV$. Hence, $P_{i_1i_2i_3i_4i_5}V^2$ is integral over~$R_2$. Since $P_{i_1i_2i_3i_4i_5}$ is a monomial in~$A_{ijk}$, we obtain that $V$ is integral over~$\Lambda_2$. Similarly, for $n=5$, Lemmas~\ref{lem_typeI}(v) and~\ref{lem_typeII}(iv) imply that  $P_{i_1i_2i_3i_4i_5}DV^2$ is integral over~$R_2$. 
In section~\ref{section_negative} we have proved that $V$ is not integral over~$R_2$ if $n$ is odd, and is not integral over~$\Lambda_2$ if $n=4$ or~$5$. Thus, we have proved Theorems~\ref{theorem_R}, \ref{theorem_Lambda}, and~\ref{theorem_leading2}.

\begin{proof}[Proof of Theorem~\ref{theorem_leading}] 
Lemmas~\ref{lem_typeI}(i) and~\ref{lem_typeII}(v) imply that $QW$ is integral over~$R_2$. Hence, $QW$ is integral over~$R_2'=\Q[\bS_2]$. Therefore, $Q\in\mathfrak{b}$. The proof of the following algebraic lemma is standard.

\begin{lem}
Let $R$ be a factorial domain contained in a field~$F$, and let $y\in F$ be an element that is algebraic over~$R$, but not integral over~$R$. Then the ideal\/ $\mathfrak{c}\lhd R$ consisting of all\/ $x\in R$ such that $xy$ is integral over~$R$ is principal.
\end{lem}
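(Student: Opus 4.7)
The plan is to exploit the fact that a factorial domain $R$ is integrally closed in its fraction field $K=\mathrm{Frac}(R)$, so that integrality over $R$ of an element algebraic over $K$ is equivalent to its minimal polynomial over $K$ having coefficients in $R$. First I would write the minimal polynomial of $y$ over $K$ as
\begin{equation*}
T^n+a_{n-1}T^{n-1}+\cdots+a_0,\qquad a_j\in K.
\end{equation*}
A direct substitution and rescaling shows that, for $x\in R\setminus\{0\}$, the element $xy$ satisfies the monic polynomial
\begin{equation*}
T^n+xa_{n-1}T^{n-1}+x^2a_{n-2}T^{n-2}+\cdots+x^na_0,
\end{equation*}
which is of the same degree as the minimal polynomial of $y$ and hence is the minimal polynomial of $xy$. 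Consequently $x\in\mathfrak{c}$ if and only if $x^{n-j}a_j\in R$ for every $j=0,\ldots,n-1$.

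The next step is to translate each condition $x^{n-j}a_j\in R$ into a divisibility condition in $R$. For every $j$ with $a_j\ne 0$, factoriality lets me write $a_j=b_j/c_j$ with $b_j,c_j\in R$ coprime. Then $x^{n-j}a_j\in R$ is equivalent to $c_j\mid x^{n-j}$. Working prime by prime in the UFD $R$, this becomes $(n-j)v_p(x)\ge v_p(c_j)$ for every prime $p$ of $R$, i.e.\ $v_p(x)\ge\lceil v_p(c_j)/(n-j)\rceil$. Taking the maximum over the finitely many $j$ with $a_j\ne 0$, I obtain a finite lower bound
\begin{equation*}
m_p:=\max_{j:\,a_j\ne 0}\left\lceil\frac{v_p(c_j)}{n-j}\right\rceil,
\end{equation*}
and only finitely many primes $p$ enter with $m_p>0$ (namely those dividing at least one $c_j$).

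Finally I would set $d:=\prod_p p^{m_p}$, a well-defined element of $R$, and conclude that $x\in\mathfrak{c}$ if and only if $d\mid x$, i.e.\ $\mathfrak{c}=(d)$. The hypothesis that $y$ is not integral over $R$ only serves to ensure that $\mathfrak{c}$ is a proper ideal, equivalently that $d$ is not a unit; principality itself does not use this assumption. I do not anticipate any real obstacle: the argument is a routine translation of integrality of $xy$ into coefficient-wise divisibility conditions on $x$, which the unique factorization in $R$ collapses to a single divisibility condition and hence to a principal ideal.
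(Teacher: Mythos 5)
Your proof is correct. The paper does not actually supply an argument here (it only remarks that ``the proof of the following algebraic lemma is standard''), and your write-up is a complete and accurate instance of that standard argument: factoriality gives integral closedness, so integrality of $xy$ is read off from the coefficients $x^{n-j}a_j$ of its minimal polynomial, and unique factorization collapses the resulting divisibility conditions $c_j\mid x^{n-j}$ into the single condition $d\mid x$ with $d=\prod_p p^{m_p}$. Your side remark that the non-integrality hypothesis is only needed to make $\mathfrak{c}$ proper is also accurate.
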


The ring~$\Q[\bS_2]$ is a factorial domain. Therefore, the ideal~$\mathfrak{b}$ is principal. Suppose, $\mathfrak{b}=(B)$, then $B$ is a divisor of~$Q$.  The symmetric group~$\mathfrak{S}_5$  acts naturally on~$\Q[\bell]$ by~$\nu\cdot\ell_{ij}=\ell_{\nu(i)\nu(j)}$. The subring $\Q[\bS_2]$ is invariant under this action, since $\nu\cdot S_{ijk}=S_{\nu(i)\nu(j)\nu(k)}$. The element $W$ is fixed by the action of~$\mathfrak{S}_5$. Hence the ideal~$\mathfrak{b}$ is $\mathfrak{S}_5$-invariant. Therefore, $\nu\cdot B=\pm B$ for all $\nu\in\mathfrak{S}_5$. However, it is easy to check that the polynomial~$Q$ has no proper divisors in~$\Q[\bS_2]$  invariant under the action of~$\mathfrak{S}_5$ up to sign. Thus, $B=Q$.
\end{proof}

\end{document}